\newtheorem{prop}{Proposition}[section]     
\newtheorem{thm}{Theorem}[section]
\newtheorem{lemma}{Lemma}[section]
\newtheorem{Remark}{Remark}[section]
\newtheorem{Corollary}{Corollary}[section]
\newtheorem{assumption}{Assumption}[section]
\newtheorem*{lemmas}{Lemma 3.0$^\star$}
\theoremstyle{definition}
\newcommand{\im}{\mathrm{i}}
\newcommand{\mynegspace}{\hspace{-0.12em}}
\newcommand{\bigsnorm}[1]{\Big\rvert\mynegspace\Big\rvert\mynegspace\Big\rvert\mynegspace {#1} \Big\rvert\mynegspace \Big\rvert\mynegspace \Big\rvert}
\newcommand{\snorm}[1]{\rvert\mynegspace\rvert\mynegspace\rvert\mynegspace {#1} \rvert\mynegspace \rvert\mynegspace \rvert}
\newcommand{\norm}[1]{{\|}{#1}{\|}}
\newcommand{\bignorm}[1]{\Big{\|}{#1}\Big{\|}}
\newcommand{\Bignorm}[1]{\Bigg{\|}{#1}\Bigg{\|}}
\newcommand{\inprod}[2]{\langle #1, #2 \rangle}
\newcommand{\biginprod}[2]{\Big\langle #1, #2 \Big\rangle}
\newcommand\tageq{\addtocounter{equation}{1}\tag{\theequation}}
\DeclareMathOperator{\Tr}{Tr}
\newcommand{\lamx}[2]{\lambda^{(#1)}_{X,#2}}
\newcommand{\E}{\mathbb{E}}
\newcommand{\lpr}{{l^{\prime}}}
\newcommand{\F}{\mathcal{F}}
\newcommand{\G}{\mathcal{G}}
\newcommand{\op}{\mathcal{L}}
\newcommand{\cov}{\mathrm{Cov}}
\newcommand{\var}{\mathrm{Var}}
\newcommand{\znum}{\mathbb{Z}}
\newcommand{\rnum}{\mathbb{R}}
\newcommand{\cnum}{\mathbb{C}}
\newcommand{\hi}{\mathbb{H}}
\newcommand{\ldx}[1]{\mathcal{V}^{{#1}\lambda}_{T}}
\newcommand{\ldd}[1]{\mathcal{V}^{{#1}(m),\lambda}_{T}}
\newcommand{\ldm}[1]{\mathcal{M}^{{#1}(\lambda)}_{T,m}}
\newcommand{\ldmi}[2]{\mathcal{M}^{{#1}(\lambda_{#2})}_{T,m}}
\newcommand{\dm}[1]{D^{(\lambda)}_{m,#1}}
\newcommand{\dmi}[2]{D^{(\lambda_{#2})}_{m,#1}}
\newcommand{\blu}[1]{\textcolor{black}{#1}}
\providecommand{\keywords}[1]{\textbf{\textit{keywords: }} #1}
\providecommand{\AMS}[1]{\textbf{\textit{AMS subject classification: }} #1}
\providecommand{\acknowledgements}[1]{\textbf{\textit{Acknowledgements}} #1}
\begin{document}

\title{A note on quadratic forms of stationary functional time series under mild conditions}

\author{Anne van Delft\thanks{anne.vandelft@rub.de}}
\affil{Ruhr-Universit\"at Bochum\\ Fakult\"at f\"ur Mathematik\\ 44780 Bochum, Germany}

\date{November 25, 2022}
\maketitle
\vspace*{-5pt}
\begin{abstract}
We study distributional properties of a quadratic form of a stationary functional time series under mild moment conditions. As an important application, we obtain consistency rates of estimators of spectral density operators and prove joint weak convergence to a vector of complex Gaussian random operators. Weak convergence is established based on an approximation of the form via transforms of Hilbert-valued martingale difference sequences.  As a side-result, the distributional properties of the long-run covariance operator are established.
\end{abstract}

\keywords{functional data, time series, spectral analysis, martingales}

\smallskip

 \AMS{Primary: 62HG99, 60G10, 62M15, Secondary 62M10.} 


\section{Introduction}

The subject of this paper \blu{is} quadratic forms of a stationary time series $\{X_t : t \in \znum\}$ with paths in some function space $H$. From a technical perspective, we shall adhere to existing literature and assume $H$ is a separable Hilbert space. Each realization is therefore a function. Such \textit{Functional time series} are of growing interest due to the fact that many processes are almost continuously measured on their domain of definition.
While quadratic forms of Euclidean-valued random variables have received considerable attention and have been studied under various dependence conditions \citep[see \blu{e.g.},][and references therein]{deJong87,Mikosch91,KokTaq1997,BhGiKok07,WuSh07,LeeSR17},
this is not so much the case for quadratic forms of \blu{function-valued} random variables. Yet, they do arise naturally in a variety of inference problems. A quadratic form statistic of a functional time series can be given by 
 \begin{align*}
 \hat{\mathcal{Q}}_{T}=\sum_{s,t =1}^{T}  \Phi_{T,s,t}( X_s \otimes X_t) \tageq \label{eq:Q1}
\end{align*}
where $\{\Phi_{T,t,s} \}_{t,s \in \{1, \ldots, T\}}$ \blu{is} a sequence of bounded linear operators, which will vary depending on the application. Important applications in which statistics of the form \eqref{eq:Q1} arise are those that concern the consistent estimation of the second order characteristics of the process. This is especially relevant \blu{for} functional data because the smoothness properties of the random functions are encoded in the second order structure and are key in obtaining optimal finite-dimensional representations. 
For example, if we denote $I_{H \otimes H}$ the identity operator on the tensor product space $H \otimes H$, then the specification $\Phi_{T,t,s}= \frac{1}{T}\mathrm{1}_{s=t} I_{H \otimes H}$ yields the sample covariance operator. In the case of $i.i.d.$ functional data, this object captures the full second order structure and its eigen decomposition plays a central role in the \blu{reduction} to finite dimension of the process's properties, e.g., via the Karhunen-Lo{\`e}ve representation if $H = L^2$. Not surprisingly, the sample covariance operator received considerable attention in the corresponding line of literature \citep[e.g.,][]{Grenander81,DauPoRo82,RicRam91}, but also in case of linear processes \citep[see among others][and references therein]{Bosq02,DehlShar05,HorvKok12}. However, when there is serial correlation between observations the covariance operator clearly does not capture the full dynamics. For dependent functional data, a more meaningful object is therefore the spectral density operator
\begin{align}
\F^{(\lambda)} =\frac{1}{2\pi}\sum_{h \in \znum} C_h e^{-\im \lambda h}, \quad \lambda \in (-\pi,\pi], \label{eq:sdo}
\end{align}
where $C_h$ is the $h$-lag covariance operator of the process $X \blu{:=\{X_t \colon t\in \znum\}}$. As an estimator for $\F^{(\lambda)}$ of a process $X$ with mean function $\mu$, one can consider 
\begin{align*}
\hat{\F}^{(\lambda)} =\frac{1}{2 \pi T}\sum_{s,t=1}^{T}\underbrace{w(b_T(t-s)) e^{\im \lambda (t-s)}}_{\phi_{T,t-s}^{(\lambda)}} \big( (X_s-\mu) \otimes (X_t-\mu) \big) ,  \quad \lambda \in (-\pi,\pi],
\end{align*}
which simply corresponds to the quadratic form in \eqref{eq:Q1} with $\Phi^{(\lambda)}_{T,t,s}=2\pi T \phi_{T,t-s}^{(\lambda)} I_{H \otimes H}$. Here $w(\cdot)$ is an even, bounded function on $\rnum$ that is continuous at zero and $b_T$ is a bandwidth parameter converging to zero at a rate such that $b_T T \to \infty$ as the sample size $T$ tends to infinity. The properties of this estimator and its relation to the smoothed periodogram operator are discussed in detail in  \autoref{sec:sec4}. For $\lambda=0$, $2\pi \hat{\F}^{(0)}$ is an estimator of the long-run covariance operator. Because it arises as the limiting covariance operator of the sample mean function, properties of the long-run covariance operator have been studied in several contexts within the framework of $L^p_m$-approximability \blu{\citep[see e.g.,][]{HorKokRe13,HorKok10,BerHorRi17}.}

Frequency domain analysis of functional time series, i.e., the case $\lambda \ne 0$, has received considerably less attention than time domain analysis. \blu{Yet, not only does frequency domain analysis (and hence the spectral density operator)
arise naturally in various applications, 
 it allows in particular to capture the full second order dynamics of dependent functional data.} It can therefore be seen to take on a similar role for dependent functional data as the covariance operator takes on in the case of $i.i.d.$ functional data. In fact, it allows to \blu{reduce} the uncountably infinite variation to a countably infinite space in an optimal manner via a \textit{dynamic} Karhunen-Lo{\`e}ve representation provided the function space is sufficiently smooth. Moreover, frequency domain based inference methods enable powerful nonparametric tools for hypothesis testing. Because of its relevance for dependent functional data, estimators of $\F^{(\lambda)}$ in the context of $L^p_m$-dependence as well as under functional cumulant-mixing conditions were introduced earlier this decade. Under $L^p_m$ approximability, \cite{HorHalL15} considered dynamic principal components for stationary functional time series and obtained a consistency result for a lag window estimator. Under cumulant-mixing conditions, \cite{pt13} derived consistency and asymptotic normality of a smoothed periodogram operator estimator. Estimation and distributional properties of an estimator for a time-varying spectral density operator were derived in \cite{vde16}, who introduced a framework for locally stationary functional time series. Note that all of the aforementioned estimators can be written in the form \eqref{eq:Q1}. It is worth mentioning that these works have paved the way for frequency domain-based inference of functional time series, leading to an upsurge in the available literature in the past few years \blu{\citep[see e.g.,][and references therein]{Zhang16,HorKokNis17,LPapSap2018,PhPan2018,vde18,KokJou2019}}. 

Cumulant tensors and spectral cumulant tensors can be shown to form Fourier pairs, provided appropriate summability conditions are satisfied. The consideration of functional cumulant mixing conditions as in \cite{pt13} can therefore to some extent be seen to provide a natural framework for the derivation of sampling properties. Yet, the central limit theorem and consistency result as derived in \cite{pt13} rely on existence of all moments and summability conditions of the cumulant tensors. In certain applications such required summability conditions might be too strong and worthwhile to be relaxed. To the best of the author's knowledge, there is currently no CLT  available under $L^p_m$-dependence and the consistency rate available in this setting \citep{HorHalL15} is sub-optimal compared to the one derived under cumulant mixing conditions in \cite{pt13}.

Broadly speaking, the goal of this paper is therefore twofold. We aim to derive a general central limit theorem for quadratic forms of stationary functional time series under sharp moment conditions. At the same time, we aim to obtain the best possible convergence and consistency rates for the aforementioned applications. It is worth mentioning that our conditions on the dependence structure are also weaker than those considered within the $L^p_m$-dependence framework. 
Underlying our approach is an approximation of the quadratic form with a Hilbertian-valued martingale process. To construct this process, we shall use a martingale approximation of the  quadratic form. The idea to approximate a normalized partial sum process via a related martingale process was first put forward by \cite{Gor69}. \cite{Wu05a} introduced this approach to derive distributional properties of the Discrete Fourier transform (DFT) of a Euclidean-valued ergodic time series. The latter has since then been applied in a variety of problems \citep[see e.g.,][]{PeWu10,WuSh07,LiuWu10}. In \cite{CevHor17}, the result of \cite{Wu05a} and \cite{PeWu10} was generalized to a CLT of the Discrete Fourier transform of a Hilbertian-valued time series.

The structure of this note is as follows. In \autoref{sec:sec2}, we introduce necessary notation and conditions. In \autoref{sec:QF}, we explain the approach in more detail and provide a joint central limit theorem for a set of quadratic forms as in \eqref{eq:Q1}. In \autoref{sec:sec4}, we focus on the estimation of the spectral density operator and long-run covariance as particular applications. More specifically, consistency rates and distributional properties are established. \textcolor{black}{In \autoref{sec:sec5}, we relate the mild assumptions made in this paper to functional cumulant mixing conditions.} Various technical results and proofs are relegated to the Appendix.

\section{Framework}\label{sec:sec2}

\blu{Throughout this paper, we will focus on random variables taking values in some separable Hilbert space, say  $H$. For elements of $H$, we shall denote the inner product by $\inprod{\cdot}{\cdot}$ and the induced  norm by $\|\cdot\|_H$.} We let $H_1 \otimes H_2$ denote the Hilbert tensor product of the Hilbert spaces $(H_j, \inprod{\cdot}{\cdot}_{H_j})_{j=1,2}$. This Hilbert space can be constructed from the algebraic tensor product $H_1 \otimes_{\mathrm{alg}} H_2$ together with a bilinear map $\psi: H_1 \times H_2 \to H_1 \otimes_{\mathrm{alg}} H_2$ that satisfies $\inprod{\psi(x_1,x_2)}{\psi(y_1,y_2)} = \inprod{x_1}{y_1}_{H_1}\inprod{x_2}{y_2}_{H_2}$ for $x_1, y_1 \in H_1$ and $x_2,y_2 \in H_2$ and then taking the completion with respect to the induced norm \cite[see e.g.,][for details]{KadRing97}. \blu{By the associative law, this can be extended to construct a Hilbert space, $\otimes_{i=1}^n H_i$, from the algebraic $n$-fold tensor product. 
Next, we denote by $S_{\infty}(H_1, H_2)$ the Banach space of bounded linear operators $A\colon H_1 \to H_2$ equipped with the operator norm $\snorm{A}_\infty$ $=\sup_{\|g\|_{H_1} \le1}\|Ag\|_{H_2}, g \in H_1$.  An operator $A \in S_\infty(H):=S_\infty(H,H)$ is called non-negative definite if $\inprod{Ag}{g}\ge 0$ for all $g \in H$. It is called self-adjoint if $\inprod{Af}{g} = \inprod{f}{A^{\dagger}g}=\inprod{f}{A g}$ for all $f,g \in H$, where $A^\dagger$ denotes the adjoint of $A$. The conjugate operator of $A$ can be defined as $\overline{A}g=\overline{(A\overline{g})}$, where $\overline{g}$ denotes the complex conjugate of $g \in H$. 
For $A, B, C \in S_{\infty}(H)$ we define the Kronecker tensor product $(A \widetilde{\otimes} B)C = ACB^{\dagger}$, while the transpose Kronecker tensor product is given by $(A \widetilde{\otimes}_{\top} B)C = (A \widetilde{\otimes} \overline{B})\overline{C}^{\dagger}$. A \blu{bounded linear} operator $A: H_1 \to H_2$ belongs to the class of Hilbert-Schmidt operators, denoted by $S_2(H_1,H_2)$, if it has finite Hilbert-Schmidt norm $\snorm{A}_2: = (\sum_{i=1}^{\infty}\|A(\chi_i)\|_{H_2}^2)^{1/2}$, \blu{where $\{\chi_i\}_{i \ge 1}$ is an arbitrary orthonormal basis of $H_1$}. We say $A$ is trace class and denote $A \in S_1(H)$ if $A$ has finite trace class norm $\snorm{A}_1 = \sum_{i=1}^{\infty}\inprod{ (AA^\dagger)^{1/2}(\chi_i)}{\chi_i}$. 
The space $(S_2(H_1,H_2),\snorm{\cdot}_2)$ is a Hilbert space with inner product $\inprod{A}{B}_S =\Tr(AB^\dagger) = \sum_{i=1}^{\infty} \inprod{A(\chi_i)}{B(\chi_i)}_{H_2}$, $A, B \in S_2(H_1,H_2)$.}
 For $f,g, v\in H$, define the tensor product $f \otimes g\colon H  \to H$ as the bounded linear operator $(f \otimes g)v =\inprod{v}{g}f$. The mapping  $\mathcal{T}\colon H \otimes H \to S_2(H)$ defined by the linear extension of $\mathcal{T}(f \otimes g) = f \otimes \overline{g}$ is an isometric isomorphism.

\blu{A $H$-valued random element $X$ over a probability space $(\Omega,\mathcal{A},\mathbb{P})$ is a strongly measurable function $X: (\Omega,\mathcal{A},\mathbb{P}) \to (H, \mathcal{B})$, where $\mathcal{B}$ denotes the $\sigma$-algebra on $H$. We denote $X \in \op^p_H$ if $\|X\|_{\mathbb{H},p}:=(\E\|X\|_{H}^p)^{1/p} <\infty$.  Observe that $\op^p_H$ is a Banach space w.r.t. the norm $\|X\|_{\mathbb{H},p}$ and for $p=2$ it is a Hilbert space when equipped with the inner product $\E\inprod{\cdot}{\cdot}$. }
For $X \in \op^1_H(\Omega,\mathcal{A},\mathbb{P})$ and $\mathcal{A}_o$ a sub-algebra of $\mathcal{A}$, we define the conditional expectation $\E[\cdot|\mathcal{A}_o]: \op^1_H(\Omega,\mathcal{A},\mathbb{P}) \to  \op^1_H(\Omega,\mathcal{A}_o,\mathbb{P}) $ to be the mapping such that
\[
\int_{A} \E[X|\mathcal{A}_o] d\mathbb{P}=\int_{A} X d\mathbb{P} \quad A \in \mathcal{A}_o,
\]
where the expectations should all be understood in the sense of a Bochner integral. Note that classical properties of conditional expectations remain valid in the context of separable Hilbert spaces. The cross-covariance operator between two zero-mean elements $X, Y \in \op^2_H$ is given by $\cov(X,Y)=\E(X \otimes Y)$ and belongs to $S_1(H)$. We note in particular that $\|X\|^2_{\hi,2}=\Tr(\text{Var}(X \otimes X))$, \blu{where $\var(X) = \cov(X,X)$}. For a filtration $\{\mathcal{G}_k\}$ of sub $\sigma$-algebras of $\mathcal{A}$, we shall make extensive use of projection operators defined by
\[
P_k = \E[\cdot|\G_k]-\E[\cdot|\G_{k-1}], \quad k \in \mathbb{Z}
\]
which are linear operators on $\op^1_H$ and are strongly orthogonal elements in $\op^2_H$, i.e., 
\[
\cov(P_k(X_1), P_j(X_2)) = O_H \quad \forall X_1, X_2 \in \op^2_H \text{ and } k \ne j \in \mathbb{Z}.
\]
Finally, we let $\Rightarrow_N$ indicate convergence in distribution as $N \to \infty$, where $N \in \mathbb{N}$.

\section{Main result}\label{sec:QF}

Throughout this article, we are interested in weakly stationary functional time series $\{X_t\colon t\in\mathbb{Z}\}$ taking values in $\op^2_H$. In particular this means that the mean $\E X_t=\mu$ and the $h$-lag covariance operator $C_h$ are invariant under translations in time, i.e, $C_h =\E (X_h-\mu) \otimes (X_0-\mu)$. Without loss of generality, we shall assume that the data are centered. When the mean is unknown one can consider centering the data by subtracting the sample mean function (see Remark \autoref{rem:mean}). Furthermore, we assume the process admits a representation of the form
\[X_t = g(\epsilon_t, \epsilon_{t-1},\ldots,)\]
where $\{\epsilon_t: t\in\mathbb{Z}\}$ is an i.i.d. sequence of elements in some \blu{measurable space $S$ and where  $g: S^{\infty} \to H$ is a measurable function}. Functional processes with such representation are widely applicable and allow for example for nonlinear dynamics\citep[e.g.,][]{HorKok10}. 
It is clear from this representation that $X$ is stationary and ergodic and we can consider the filtration $\G_t = \sigma(\epsilon_t, \epsilon_{t-1},\ldots)$.  Moreover, it is straightforward to show that a stationary ergodic process can be written as 
\[
X_h =\sum_{j=-\infty}^{h} P_j(X_h) \tageq \label{eq:ProjX}
\]
where the equality holds in $\op^2_H$\blu{\citep[see e.g.,][]{Pourahmadi01}}. Note that $\{P_j(X)\}$ \blu{is} a martingale difference sequence with respect to the backward filtration of $\sigma$-algebras $\{\G_{-j}:j>0\}$. In order to formulate conditions on the dependence structure we consider a generalized version of the physical dependence measure of \cite{Wu05b}. More specifically, let $\{\epsilon^{\prime}_t: t\in\mathbb{Z}\}$ be an independent copy of $\{\epsilon_t: t\in\mathbb{Z}\}$ defined on $(\Omega,\mathcal{A},\mathbb{P})$. For a set $I \subset \mathbb{Z}$, let $\G_{t,I}=\sigma(\epsilon_{t,I},\epsilon_{t-1,I},\ldots)$ where $\epsilon_{t,I} =\epsilon^\prime_t$ if $t \in I$ and $\epsilon_{t,I} =\epsilon_t$ if $t \not\in I$\blu{. As} a measure of dependence define \blu{the coefficients}
\[
\nu_{\hi,p}(X_t)=\|X_t -\E[X_t|\G_{t,\{0\}}] \|_{\hi,p}. \tageq \label{eq:depstruc3}
\]
Additionally, define the $m$-dependent version
\begin{align*}
X^{(m)}_{t}= P_{t,t-m}X_{t}=\E[X_t| \sigma(\epsilon_t, \epsilon_{t-1},\ldots,\epsilon_{t-m})].
\end{align*}
 The following summarizes the assumption on the dependence structure made throughout this paper.
\begin{assumption}
\label{as:depstruc}
$\{X_t\colon t\in\mathbb{Z}\}$ is a centered stationary functional time series in $\op^p_H$ such that
 \[
\sum_{j=0}^{\infty}\nu_{\hi,p}(X_j)< \infty \tageq \label{eq:depstruc2}
 \]
with $p=4$.
\end{assumption}
Observe that \autoref{as:depstruc} is weaker than $L^p_m$-dependence as introduced  in \cite{HorKok10} and than physical dependence as given in \cite{Wu05b}. 
 \begin{lemmas}[\textbf{Assumption \eqref{eq:depstruc2} versus $L^p_m$- and physical dependence}]\label{lpmim31} 
 Define the random functions $\tilde{X}^{(m)}_t=g(\epsilon_t, \ldots,  \epsilon_{t-m+1}, \epsilon^\prime_{t-m}, \epsilon^\prime_{t-m-1}, \ldots)$ and $\tilde{X}^{\prime}_t=g(\epsilon_t, \ldots, \epsilon_{1}, \epsilon^\prime_0, \epsilon_{-1}, \ldots)$, respectively. Then
\begin{enumerate}
\item[(i)]  $\sum_{t=0}^{\infty}\|X_t-\tilde{X}^{(t)}_t\|_{\hi,p}< \infty\,$ implies $\,\sum_{t=0}^{\infty}\nu_{\hi,p}(X_t)< \infty$.
\item[(ii)]  $\sum_{t=0}^{\infty}\|X_t-\tilde{X}^{\prime}_t\|_{\hi,p}< \infty\,$ implies  $\,\sum_{t=0}^{\infty}\nu_{\hi,p}(X_t)< \infty$.
\end{enumerate}
\end{lemmas}
\begin{proof}[Proof]
To prove (i), it suffices to observe that
\begin{align*}
\nu_{\hi,p}(X_t)
&\le  \bignorm{X_t -P_{t,1}\big(\E[X_t|\G_{t,\{0\}}]\big)}_{\hi,p} +\bignorm{P_{t,1}\big(\E[X_t|\G_{t,\{0\}}]\big)- \E[X_t|\G_{t,\{0\}}] }_{\hi,p}
\\& \le 2\|X_t -P_{t,1}(X_t)\|_{\hi,p}  
 = 2\|\E[X_t|\G_t]-\E[\tilde{X}^{(t)}_t|\G_t]\|_{\hi,p} 
\\&  \le  2\|X_t-\tilde{X}^{(t)}_t\|_{\hi,p}.
\end{align*}
Similarly, $\nu_{\hi,p}(X_t) 
\le  \|X_t -X^\prime_t\|_{\hi,p} + \|\E[X^\prime_t|\G_{t,\{0\}}]- \E[X_t|\G_{t,\{0\}}] \|_{\hi,p}  \le  2\|X_t-X^\prime_t\|_{\hi,p}$, from which (ii) follows. 
\end{proof}

 Additionally, note that by Jensen's inequality and the contraction property of the conditional expectation 
\[\|P_0(X_t)\|_{\hi,p}
= \| \E[ X_t-\E [X_t|\G_{t,\{0\}}] | \G_0 ]\|_{\hi,p} 
\le \nu_{\hi,p}(X_t).\] Hence, under condition \eqref{eq:depstruc2} we have $\sum_{j=0}^{\infty}\|P_0(X_j)\|_{\hi,p} < \infty$. \blu{It is moreover interesting to relate \autoref{as:depstruc} to summability of functional cumulant mixing conditions. In particular, the assumption of \textit{at least} summability of the fourth order cumulant operator in either $\snorm{\,\cdot}_2$ or $\snorm{\,\cdot}_1$ is often made in available literature \citep[see e.g.][]{pt13,Zhang16,PhPan2018}. These type of conditions are generally stronger than those considered in this paper and  generalizations of the coefficients in \eqref{eq:depstruc3} that can control for the interdependencies are required for such mixing conditions to hold. We postpone the discussion of these conditions to Section \ref{sec:sec5}.}

The assumption $\{X_t\colon t\in\mathbb{Z}\} \in \op^4_H$ ensures a finite second order structure of a random element of the form $X_t \otimes X_s$. Note that the latter can be viewed as a random element of $S_2(H)$, i.e., it is a measurable mapping from $(\Omega,\mathcal{A})$ into $(S_2(H), \mathcal{B})$ and thus $X_t \otimes X_s \in \op^2_{S_2(H)}$. Existence of a limit of the quadratic form in \eqref{eq:Q1} requires conditions on both the weight sequence as well as on the dependence structure. To elaborate on the latter, condition \eqref{eq:depstruc2} has two implications (Proposition \autoref{prop:sumCh} and Proposition \autoref{prop:propertiesDM}, resp.), which we shall make use of in order to derive distributional properties of the quadratic form. Denote the functional Discrete Fourier Transform (fDFT) of the stationary process $X$ by
\[\mathcal{D}^{(\lambda)}_T=\frac{1}{\sqrt{2\pi T}} \sum_{t=1}^{T} X_t e^{-\im \lambda t}. \tageq \label{eq:fDFT}\]
\blu{Provided the dependence structure of the process decays fast enough, the limiting variance of \eqref{eq:fDFT} is given by the spectral density operator in \eqref{eq:sdo}. As the next statement shows, this is the case for processes that satisfy Assumption \ref{as:depstruc} with $p=2$.}
\begin{prop}\label{prop:sumCh}
\blu{Suppose Assumption \ref{as:depstruc} with $p=2$ is satisfied. Then $\sum_{h \in \znum}\snorm{C_h}_2 < \infty $ and $
\F^{(\lambda)}$ exists as a non-negative definite Hermitian element of $S_2(H)$ for all $\lambda \in (-\pi,\pi]$. Furthermore,}
\begin{align*}
\blu{ \lim_{T \to \infty}\var(\mathcal{D}^{(\lambda)}_T )
= \F^{(\lambda)}.}
\end{align*}
\end{prop}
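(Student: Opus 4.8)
The plan is to prove the three assertions in sequence, with the summability $\sum_{h \in \znum}\snorm{C_h}_2 < \infty$ serving as the technical core from which existence, the Hermitian and non-negativity properties, and the variance limit all follow with little extra work.

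First I would establish the summability. As the data are centered, $C_h = \E(X_h \otimes X_0)$, and I would substitute the projection decomposition \eqref{eq:ProjX}, writing $X_h = \sum_{j \le h} P_j(X_h)$ and $X_0 = \sum_{k \le 0} P_k(X_0)$ with convergence in $\op^2_H$. Exploiting bilinearity and continuity of the map $(U,V) \mapsto \E(U \otimes V)$ from $\op^2_H \times \op^2_H$ into $S_2(H)$, together with the strong orthogonality $\cov(P_j(X_h), P_k(X_0)) = O_H$ for $j \ne k$ recorded in \autoref{sec:sec2}, only the diagonal terms survive, so that
\[
C_h = \sum_{j \le \min(h,0)} \E\big(P_j(X_h) \otimes P_j(X_0)\big).
\]
Since the Hilbert--Schmidt norm of a rank-one operator obeys $\snorm{f \otimes g}_2 = \|f\|_H\|g\|_H$, the triangle inequality, Jensen's inequality for the Bochner integral, and Cauchy--Schwarz give $\snorm{C_h}_2 \le \sum_{j \le \min(h,0)} \|P_j(X_h)\|_{\hi,2}\|P_j(X_0)\|_{\hi,2}$. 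Writing $\|P_j(X_h)\|_{\hi,2} = \|P_0(X_{h-j})\|_{\hi,2}$ by stationarity and substituting $k = -j$, summing over $h$ and reindexing by $l = h + k$ factorizes the resulting double sum, producing
\[
\sum_{h \in \znum}\snorm{C_h}_2 \le \Big(\sum_{j \ge 0}\|P_0(X_j)\|_{\hi,2}\Big)^2 \le \Big(\sum_{j \ge 0}\nu_{\hi,2}(X_j)\Big)^2 < \infty,
\]
where I use that $P_0(X_m)=O_H$ for $m<0$, the bound $\|P_0(X_j)\|_{\hi,2} \le \nu_{\hi,2}(X_j)$ already noted in the text, and \autoref{as:depstruc} with $p=2$.

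Given summability, existence of $\F^{(\lambda)}$ is immediate: the series $\frac{1}{2\pi}\sum_h C_h e^{-\im\lambda h}$ converges absolutely, uniformly in $\lambda$, in the complete space $(S_2(H), \snorm{\cdot}_2)$, so its limit lies in $S_2(H)$. The Hermitian property follows from $C_h^\dagger = C_{-h}$ --- itself a consequence of stationarity and $(f\otimes g)^\dagger = g \otimes f$ --- after reindexing $h \mapsto -h$ in the adjoint series. For the variance limit I would expand the tensor using complex bilinearity to obtain
\[
\var(\mathcal{D}^{(\lambda)}_T) = \frac{1}{2\pi T}\sum_{s,t=1}^{T} e^{-\im\lambda(t-s)} C_{t-s} = \frac{1}{2\pi}\sum_{|h| < T}\Big(1 - \tfrac{|h|}{T}\Big) e^{-\im\lambda h} C_h,
\]
so that $\F^{(\lambda)} - \var(\mathcal{D}^{(\lambda)}_T)$ splits into the tail $\frac{1}{2\pi}\sum_{|h| \ge T} C_h e^{-\im\lambda h}$ and the weighted term $\frac{1}{2\pi}\sum_{|h| < T} \tfrac{|h|}{T} C_h e^{-\im\lambda h}$. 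Both vanish in $\snorm{\cdot}_2$ as $T \to \infty$: the tail as the remainder of a convergent series, and the weighted term by dominated convergence, since $\tfrac{|h|}{T}\snorm{C_h}_2 \to 0$ pointwise and is dominated by the summable $\snorm{C_h}_2$. Non-negative definiteness is then inherited from the approximants, because each $\var(\mathcal{D}^{(\lambda)}_T)$ is non-negative definite and $S_2$-convergence forces $\inprod{\F^{(\lambda)} g}{g} = \lim_T \inprod{\var(\mathcal{D}^{(\lambda)}_T) g}{g} \ge 0$ for every $g \in H$.

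I expect the only real obstacle to lie in the summability step: justifying the termwise passage of the expectation through the infinite projection sums, confirming that the strong orthogonality annihilates every off-diagonal term in $\op^2_{S_2(H)}$, and performing the reindexing so that the double sum factorizes exactly into a square. Once that bound is secured, the absolute convergence in $S_2(H)$, the triangular-array identity for $\var(\mathcal{D}^{(\lambda)}_T)$, and the dominated-convergence limit are all routine.
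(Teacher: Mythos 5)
Your proposal is correct and follows essentially the same route as the paper: the summability of $\snorm{C_h}_2$ via the projection decomposition \eqref{eq:ProjX}, strong orthogonality of the $P_j$, stationarity and Cauchy--Schwarz with factorization into $\big(\sum_{j\ge 0}\nu_{\hi,2}(X_j)\big)^2$, and the variance limit via the C{\'e}saro expansion $\frac{1}{2\pi}\sum_{|h|<T}(1-\tfrac{|h|}{T})C_h e^{-\im\lambda h}$ and dominated convergence are exactly the paper's argument. The only cosmetic difference is that you verify the Hermitian and non-negative definiteness properties directly (from $C_h^{\dagger}=C_{-h}$ and by passing $\inprod{\var(\mathcal{D}^{(\lambda)}_T)g}{g}\ge 0$ to the limit), whereas the paper delegates this step to the Herglotz-type result cited as Theorem 3.7 of \cite{vde18}; your self-contained version is equally valid.
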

\begin{proof}[Proof of Proposition \autoref{prop:sumCh}]
We obtain by orthogonality of the projections, stationarity and Jensen's inequality
\begin{align*}
\sum_{h=0}^{\infty}\snorm{C_h}_{2} &
 \le \sum_{h = 0}^{\infty}\sum_{j=-\infty}^{0}\E \bigsnorm{\,P_0(X_{h-j})\otimes P_0(X_{-j})}_{2}
 \le\sum_{h =0}^{\infty}\sum_{j=-\infty}^{0}\sqrt{\E \|P_0(X_{h-j})\|_H^2 \E \|P_0(X_{-j})\|_H^2} 
  \\& \le \big (\sum_{j=0}^{\infty} \|P_0(X_{j})\|_{\hi,2}\big)^2 < (\sum_{j=0}^{\infty} \nu_{\hi,2}(X_j) \big)^2 <\infty
\end{align*}
and similarly for $h <0$. Hence, ${\F}_{\lambda} =\frac{1}{2\pi}\sum_{h \in \mathbb{Z}} C_h e^{-\im h \lambda}$ converges in norm $\snorm{\cdot}_2$ for all $\lambda \in (-\pi, \pi]$. It follows that $\F^{(\lambda)}$ is a non-negative definite,  Hermitian $S_2(H)$-valued density function over frequencies that satisfies $C_h=\int_{-\pi}^{\pi} \F^{(\lambda)}e^{\im h \lambda}d\lambda$ \citep[e.g.,][Thm 3.7]{vde18}. Moreover, from the dominated convergence theorem one obtains
\[\lim_{T \to \infty}\var(\mathcal{D}^{(\lambda)}_T )=\lim_{T \to \infty}\sum_{h \le T} (1-\frac{|h|}{T}) \E(X_h \otimes X_0) e^{-i \lambda h} = \F^{(\lambda)}   \quad \lambda \in (-\pi, \pi]. \tageq \label{eq:Iunb} \]
\end{proof}
Hence, \blu{$\F^{(\lambda)}$ in \eqref{eq:sdo}} exists as a limit of C{\'e}saro averages of $\{C_h e^{-\im h \lambda}\colon h \in \znum\}$ in $S_2(H)$. 
Without stronger assumptions, such as summability conditions, \blu{derivations} of several distributional properties of the quadratic form in \eqref{eq:Q1} do not appear obvious. Yet, \autoref{as:depstruc} allows to proceed via an approximating $S_2(H)$-valued random process. Underlying this approximation is the following process
\[
D^{(\lambda)}_{m,k,T} :=  \frac{1}{\sqrt{2\pi}} \sum_{t=0}^{T} P_{k}(X^{(m)}_{t+k})e^{-\im t \lambda}.
\tageq \label{eq:Dm}
\]
The second order structure of \eqref{eq:Dm} is closely related to that of $\mathcal{D}^{(\lambda)}_T$, but moreover has several useful properties that we shall make extensive use of.
\begin{prop} \label{prop:propertiesDM}
Under the conditions of \autoref{as:depstruc} with $p=4$, we have for all $\lambda \in (-\pi,\pi]$,
\begin{enumerate}[(i)]\itemsep-0.35ex
\item The process $D^{(\lambda)}_{m,k}:=D^{(\lambda)}_{m,k,\infty}$ \blu{exists} and forms \blu{an} $m$-dependent stationary martingale difference sequence with respect to the filtration $\{\G_k\}$ in $\op^4_H$. 
\item The process $\{D^{(\lambda)}_{\infty,0,T}\}_{T \ge 1}$ is Cauchy in $\op^{4}_H$
with limit 
\[D^{(\lambda)}_0:=\sum_{t=0}^{\infty} P_{0}(X_{t})e^{-\im t \lambda}\]
and the process $\{D^{(\lambda)}_{\infty,0,T} \otimes D^{(\lambda)}_{\infty,0,T}\}_{\{T \ge 1\}}$ is Cauchy in $\op^{2}_{S_2(H)}$ with limit $D^{(\lambda)}_{0} \otimes D^{(\lambda)}_{0}$.
\item
$\lim_{m \to \infty} \lim_{T \to \infty} \Tr\Big(\Pi_{ijkl}\big(\var(D^{(\lambda)}_{m,0,T}) \widetilde{\otimes} \var(D^{(\lambda)}_{m,0,T})\big)\Big) =\lim_{m \to \infty}\Tr\Big(\Pi_{ijkl} \big(\F_m^{(\lambda)} \widetilde{\otimes}\F_m^{(\lambda)}\big) \Big)$\\$
=\Tr\Big(\Pi_{ijkl} \big(\F^{(\lambda)} \widetilde{\otimes}\F^{(\lambda)}\big) \Big)< \infty$.
\end{enumerate}
\end{prop}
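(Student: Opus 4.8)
The plan is to treat the three parts in order, exploiting that the two operations $X_t\mapsto X^{(m)}_t$ and $P_k$ force the summands to vanish outside a window of length $m+1$.

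For (i), I would first record the key truncation: since $X^{(m)}_{t+k}=\E[X_{t+k}\mid\sigma(\epsilon_{t+k},\dots,\epsilon_{t+k-m})]$ is measurable with respect to $\sigma(\epsilon_{t+k},\dots,\epsilon_{t+k-m})$, the projection $P_k(X^{(m)}_{t+k})$ vanishes unless $\epsilon_k$ lies among these arguments, i.e.\ unless $0\le t\le m$; for $t>m$ every argument is independent of both $\G_k$ and $\G_{k-1}$, so $P_k(X^{(m)}_{t+k})=0$. Hence $D^{(\lambda)}_{m,k}=\tfrac{1}{\sqrt{2\pi}}\sum_{t=0}^m P_k(X^{(m)}_{t+k})e^{-\im t\lambda}$ is a \emph{finite} sum, and since each summand satisfies $\|P_k(X^{(m)}_{t+k})\|_{\hi,4}\le 2\|X_{t+k}\|_{\hi,4}<\infty$ (contractivity of conditional expectation applied twice), $D^{(\lambda)}_{m,k}\in\op^4_H$ exists. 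The martingale-difference property is immediate from the tower rule, $\E[P_k(Y)\mid\G_{k-1}]=0$, together with $\G_k$-measurability of each $P_k(\cdot)$, while stationarity follows from shift-invariance of $g$ and the i.i.d.\ structure of $\{\epsilon_t\}$. For $m$-dependence I would track the innovations: conditioning $X^{(m)}_{t+k}$ on $\G_k$ versus $\G_{k-1}$ shows $P_k(X^{(m)}_{t+k})$ depends only on $\epsilon_{t+k-m},\dots,\epsilon_k$, so (summing over $0\le t\le m$) $D^{(\lambda)}_{m,k}$ is a function of $\epsilon_{k-m},\dots,\epsilon_k$ alone; thus $D^{(\lambda)}_{m,k}$ and $D^{(\lambda)}_{m,k'}$ involve disjoint innovations, hence are independent, whenever $|k-k'|>m$.

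For (ii), note $X^{(\infty)}_t=X_t$, so $D^{(\lambda)}_{\infty,0,T}=\tfrac{1}{\sqrt{2\pi}}\sum_{t=0}^T P_0(X_t)e^{-\im t\lambda}$. Cauchyness in $\op^4_H$ follows from the triangle inequality, $\|D^{(\lambda)}_{\infty,0,T'}-D^{(\lambda)}_{\infty,0,T}\|_{\hi,4}\le\tfrac{1}{\sqrt{2\pi}}\sum_{t=T+1}^{T'}\|P_0(X_t)\|_{\hi,4}$, which is a tail of the convergent series $\sum_t\|P_0(X_t)\|_{\hi,4}\le\sum_t\nu_{\hi,4}(X_t)<\infty$ guaranteed by \autoref{as:depstruc}; completeness yields the limit $D^{(\lambda)}_0$. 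For the tensor statement I would use bilinearity, $D_{T'}\otimes D_{T'}-D_T\otimes D_T=D_{T'}\otimes(D_{T'}-D_T)+(D_{T'}-D_T)\otimes D_T$, together with $\|a\otimes b\|_{\op^2_{S_2(H)}}=(\E\|a\|_H^2\|b\|_H^2)^{1/2}\le\|a\|_{\hi,4}\|b\|_{\hi,4}$; since $\|D_T\|_{\hi,4}$ is bounded (it converges) and $\|D_{T'}-D_T\|_{\hi,4}\to0$, the sequence is Cauchy in $\op^2_{S_2(H)}$ with limit $D^{(\lambda)}_0\otimes D^{(\lambda)}_0$.

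For (iii), the $T$-limit is in fact trivial: by the truncation of (i), $P_0(X^{(m)}_t)=0$ for $t>m$, so $\var(D^{(\lambda)}_{m,0,T})$ is constant in $T$ once $T\ge m$ and equals $\var(D^{(\lambda)}_{m,0})=:\F^{(\lambda)}_m$. I would then identify $\F^{(\lambda)}_m$ with the spectral density of $X^{(m)}$: expanding $\var(D^{(\lambda)}_{m,0})=\tfrac{1}{2\pi}\sum_{s,t\ge0}\cov(P_0(X^{(m)}_t),P_0(X^{(m)}_s))e^{-\im(t-s)\lambda}$ and invoking strong orthogonality of the projections with stationarity (exactly as in the first display of the proof of \autoref{prop:sumCh}, now as an identity) gives $\F^{(\lambda)}_m=\tfrac{1}{2\pi}\sum_h C^{(m)}_h e^{-\im h\lambda}$ with $C^{(m)}_h=\cov(X^{(m)}_h,X^{(m)}_0)$. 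Next I would show $\F^{(\lambda)}_m\to\F^{(\lambda)}$ in $\snorm{\cdot}_2$ as $m\to\infty$: writing both in projection form $\tfrac{1}{2\pi}\sum_{a,b\ge0}\cov(P_0(X^{(m)}_a),P_0(X^{(m)}_b))e^{-\im(a-b)\lambda}$, the bound $\snorm{\cov(Y,Z)}_2\le\|Y\|_{\hi,2}\|Z\|_{\hi,2}$ and the domination $\|P_0(X^{(m)}_a)\|_{\hi,2}\le\|P_0(X_a)\|_{\hi,2}+2\|X^{(m)}_a-X_a\|_{\hi,2}$ enable dominated convergence, since $X^{(m)}_a\to X_a$ termwise in $\op^2_H$ and $\sum_a\|P_0(X_a)\|_{\hi,2}<\infty$. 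Both equalities then follow by passing these limits through the map $A\mapsto\Tr(\Pi_{ijkl}(A\widetilde{\otimes}A))$, a bounded quadratic form on $S_2(H)$ (one has $|\Tr(\Pi_{ijkl}(A\widetilde{\otimes}B))|\lesssim\snorm{A}_2\snorm{B}_2$, since $\Pi_{ijkl}$ is a bounded permutation operator and the trace reduces to an $S_2$-inner product), hence continuous; its finiteness at $A=\F^{(\lambda)}$ is guaranteed by $\F^{(\lambda)}\in S_2(H)$ from \autoref{prop:sumCh}.

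The main obstacle I expect is the $m\to\infty$ step, namely establishing $\F^{(\lambda)}_m\to\F^{(\lambda)}$ in $\snorm{\cdot}_2$ \emph{uniformly enough} to justify dominated convergence: one must produce an $m$-independent summable majorant for $\snorm{\cov(P_0(X^{(m)}_a),P_0(X^{(m)}_b))}_2$ and verify termwise convergence, which is where \autoref{as:depstruc} with $p=4$ (and the contractivity bounds relating $\nu_{\hi,4}$ to the $P_0$-increments) does the real work. A secondary subtlety is verifying that the quartic functional $A\mapsto\Tr(\Pi_{ijkl}(A\widetilde{\otimes}A))$ is continuous and finite on $S_2(H)$ rather than only on $S_1(H)$, since $\F^{(\lambda)}$ is known only to be Hilbert--Schmidt.
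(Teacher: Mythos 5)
Your parts (i) and (ii) are sound, and in places more elementary than the paper's own argument: you exploit the finite truncation $P_k(X^{(m)}_{t+k})=0$ for $t>m$ and a plain triangle inequality where the paper invokes its Burkholder extension (\autoref{lem:Burkh}), and your operator-level identity $\var(D^{(\lambda)}_{m,0})=\F^{(\lambda)}_m$, via strong orthogonality of the projections and stationarity, is a clean shortcut past the paper's Fourier-coefficient computation for the corresponding traces. Part (iii), however, has two genuine gaps, both sitting exactly where you flagged difficulty. First, your dominating sequence fails: in the bound $\|P_0(X^{(m)}_a)\|_{\hi,2}\le\|P_0(X_a)\|_{\hi,2}+2\|X^{(m)}_a-X_a\|_{\hi,2}$, stationarity makes $\|X^{(m)}_a-X_a\|_{\hi,2}=\|X^{(m)}_0-X_0\|_{\hi,2}$ \emph{constant in} $a$, so the right-hand side is not summable over $a$ for any fixed $m$, and no $m$-uniform summable majorant results (the truncation to $0\le a\le m$ does not help, since the window grows with $m$). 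The repair is the $m$-uniform contraction bound $\|P_0(X^{(m)}_a)\|_{\hi,p}\le\nu_{\hi,p}(X_a)$, which the paper proves inside \autoref{lem:lineq}(ii); with the majorant $\nu_{\hi,2}(X_a)\,\nu_{\hi,2}(X_b)$ your dominated-convergence argument for $\F^{(\lambda)}_m\to\F^{(\lambda)}$ in $\snorm{\cdot}_2$ then goes through.

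Second, your claimed continuity bound $|\Tr(\Pi_{ijkl}(A\widetilde{\otimes}B))|\lesssim\snorm{A}_2\snorm{B}_2$ is false for general permutations: taking $\Pi_{ijkl}$ to be the identity yields $\Tr(A\widetilde{\otimes}B)=\Tr(A)\,\overline{\Tr(B)}$, which is undefined for merely Hilbert--Schmidt $A,B$; only for certain transposing permutations does the trace collapse to an $S_2$-inner product. Since the proposition is asserted for every $\Pi_{ijkl}$, trace-class control is indispensable. This is precisely why the paper first establishes $\F^{(\lambda)}_m,\F^{(\lambda)}\in S_1(H)$, combining non-negative definiteness with the identity $\Tr(\F^{(\lambda)})=\E\|D^{(\lambda)}_0\|^2_H<\infty$ obtained through the Fourier-coefficient/F\'ejer--Lebesgue argument, and then bounds $\snorm{\Pi_{ijkl}(\F^{(\lambda)}\widetilde{\otimes}\F^{(\lambda)})}_1\le\snorm{\F^{(\lambda)}}_1^2\le\E\|D^{(\lambda)}_0\|_H^4$. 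For the same reason your $m\to\infty$ step remains incomplete even after repairing the majorant: $\snorm{\F^{(\lambda)}_m-\F^{(\lambda)}}_2\to0$ does not imply $\Tr(\F^{(\lambda)}_m)\to\Tr(\F^{(\lambda)})$, so the identity-permutation instance of the displayed limit does not follow from Hilbert--Schmidt convergence alone; the paper gets the required trace convergence separately, via $\E\|D^{(\lambda)}_{m,0}\|^2_H\to\E\|D^{(\lambda)}_0\|^2_H$, with dominated convergence justified by the Cauchy property from part (ii). In short, the $S_1$ issue you labelled a ``secondary subtlety'' is the load-bearing step of (iii), and your $S_2$-only argument breaks there.
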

Here, $\Pi_{ijkl}$ denote the permutation operator on $\otimes_{i=1}^4 H$ that permutes the components of a tensor product of simple tensors according to the permutation $(1,2,3,4)\mapsto(i,j,k,l)$, that is, $\Pi_{ijkl}(x_1\otimes\cdots\otimes x_4)=(x_i\otimes\cdots\otimes x_l)$. The details of the proof can be found in \autoref{sec:Ap1}. For fixed $m$ and $p=2$, the first statement is almost immediate from the properties of the projection operators which form martingale difference sequences with respect to $\{\G_k\}$ and the fact that the process $\{X^{(m)}_{t}\}$ is $m$-dependent. For $p=4$, the proof of the above statements requires extensions of inequalities such as Burkholder's inequality for linear transforms of Hilbert-valued martingales; see \autoref{sec:Ap1}. The Cauchy property will be necessary to verify several aspects of the distributional properties, including verification of tightness on the function space of the quadratic form. 
Proposition \autoref{prop:propertiesDM}(iii) shows in particular that the iterated limit in $T$ and $m$, respectively, of a certain functional of the variance operators of the family of martingale processes  $\{D^{(\lambda)}_{m,0,T}\}_{T \ge 1}, m \ge 1$ converge to that of the corresponding functional of $\F^{(\lambda)}$, i.e. of the limiting variance operators of the fDFT, and that this functional is finite. 

Next, we require the following conditions on the sequence of weight operators. We assume that we have a representation $\Phi_{T,s,t}=(\phi_{T,s,t} \widetilde{\otimes} I_H)$, where $\phi_{T,s,t} \in S_{\infty}(H)$ such that $\Phi_{T,s,t}=\Phi_{T,s,t}^{\dagger}$. Observe that this is an operator in $S_\infty(S_2(H))$ with the property
\[
(\phi_{T,s,t} \widetilde{\otimes} I_H) (X_s \otimes X_t) = \phi_{T,s,t}(X_s) \otimes I_H(X_t) =  (\phi_{T,s,t} \widetilde{\otimes} I_H)^{\dagger} (X_s \otimes X_t) 
=I_H(X_s) \otimes \phi_{T,s,t}^{\dagger} (X_t). \tageq \label{eq:Phiprop}
\]
Note that the identity operator can be replaced with any arbitrary bounded linear operator $B_T \in S_{\infty}(H)$. 
Additionally, we require a few technical conditions to ensure that the weights are ``well-behaved'', i.e., the quadratic form exists as a well-defined random element of $S_2(H)$ for which no degenerate (non-Gaussian) limiting distributions can arise.
\begin{assumption}[Conditions on $\phi: \znum \times \znum \to S_{\infty}(H)$] \label{as:phi}
Let $T \in \mathbb{N}$ and $\lambda \in (-\pi,\pi]$. Let $A_{T,(\cdot)}:\znum \to S_{\infty}(H)$ be a continuous mapping such that $A_{T,t} \equiv A_{T,-t}, \forall t \in \znum$ and set $\phi^{(\lambda)}_{T,t}=A_{T,t}e^{\im \lambda t}$. Denote 
\[ \snorm{\Phi_T}^2_{ F}:=\sum_{t=1}^{T}\sum_{s=1}^{T} \snorm{\,\phi_{T,{t-s}}}_{\infty}^2 \text{ and } \varrho^2_T:=\sum_{t=1}^{T} \snorm{\,\phi_{T,t}}_{{\infty}}^2.\] We assume,
\begin{enumerate}[(i)]
\item $T\varrho_T^2=O(\snorm{\,\Phi_T}^2_{F});$
\item $\max_{1 \le t \le T}\snorm{\, \phi_{T,t}}_{\infty}^2=\max_{1 \le t \le T} \snorm{A_{T,t}}^2_{\infty} = o(\varrho_T^2);$
\item $\sum_{t=1}^{T} \snorm{A_{T,t}-A_{T,t-1} }_{\infty}^2 =o(\varrho_T^2);$
\item $\sum_{j=1}^{T-1} \sum_{s =1}^{j-1}\snorm{\sum_{t=j+1}^{T}  \phi_{T,s-t}\widetilde{\otimes}\phi_{T,j-t} }_{\infty}^2=o(\|\Phi_T\|^4_{F})$.
\end{enumerate}
\end{assumption}
Note that the first condition simply ensures a balance in order, i.e., the left-hand side is of the same order as the total sum of weights operator when the latter is viewed as a function-valued operator on $\znum \times \znum$. Together with the second, this means the norm of none of the individual weight contributions dominates the order of the variance. The third condition ensures a ``smooth'' contribution of each component ${\Phi}_{T,s,t}(X_s \otimes X_T)$ to the total mass of the quadratic form. The fourth condition is required to ensure that, as the overlap of the two bivariate operator-valued functions over $\znum \times \znum$ gets smaller, the contribution to the total mass must become negligible. Observe that for the examples mentioned in the introduction where $\phi^{(\lambda)}_{T,t}$ are scalar-valued, the norms $\snorm{\cdot}_{\infty}$ can be replaced by $|\cdot|$. Condition (iv) on the kernel then simply means a bandwidth parameter $b_T << 1$ must ensure a local smoothing occurs. As will become clear in the next section, it predictably excludes that the periodogram operator without smoothing can provide an asymptotically Gaussian consistent estimator of the spectral density operator. Many different weight functions used for the consistent estimation of $\F^{\lambda}$ will satisfy the above conditions, including the common choice of a bounded piecewise continuous lag window function with compact support, provided the bandwidth parameter ensures condition (iv) holds true (see \autoref{sec:sec4}). \\

In order to derive the properties of the quadratic form, a natural and common approach is to decompose $ \hat{\mathcal{Q}}_{T}$ into off-diagonal elements and diagonal elements as follows
 \begin{align}
 \hat{\mathcal{Q}}_{T}=\sum_{t=2}^T  \sum_{s=1}^{t-1} \Phi^{(\lambda)}_{T,s,t}( X_s \otimes X_t)+\Big(\sum_{t=2}^T  \sum_{s=1}^{t-1} \Phi^{(\lambda)}_{T,s,t}( X_s \otimes X_t) \Big)^{\dagger}+\sum_{1 \le t \le T}  \Phi_{T,t,t} (X_t \otimes X_t).  \tageq \label{eq:Qdecom}
\end{align}
The main ingredient to the proof is to use that the off-diagonal elements, after centering around their mean, can be approximated by the process 
 \begin{align}
\ldm{} = \sum_{t=2}^{T} \sum_{s=1}^{t-1} \Phi^{(\lambda)}_{T,t,s} \Big(\dm{t} \otimes \dm{s}\Big), \tageq \label{eq:ldm}
\end{align}
where the functionals $\dm{t}$ are defined via \eqref{eq:Dm} in Proposition \autoref{prop:propertiesDM}(i). The intuition is therefore similar in spirit to the strategy applied in the Euclidean setting \citep[see e.g.,][]{LiuWu10,WuSh07}. We emphasize that the aim of this paper is not the same nor can the weak convergence result in our paper be seen as a trivial extension of these works. We aim to derive consistency rates and joint distributional convergence of a set of operators where the quadratic form is very general, consisting of operator-valued weight operators of a Hilbertian-valued stochastic process. The derivation of the operator approximations and of the distributional properties, including the verification of tightness on the function space, are therefore far more involved. 
The convenient properties of \eqref{eq:ldm} are given in the next statement.
\begin{prop} \label{prop:Msq}
Let $\ldm{}$ as defined in \eqref{eq:ldm}. Under \autoref{as:depstruc} with $p=4$ and fixed $m$, the process 
\[
\Big\{\snorm{\Phi_T}^{-1}_{ F}{\ldm{}}\Big\}_{T \ge 1}
\]
is a martingale process in $\op^{2}_{S_2(H)}$ with respect to the filtration $\{\G_T\}$ for all fixed $\lambda \in (-\pi,\pi]$.
\end{prop}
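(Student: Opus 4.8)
The plan is to exploit that $\ldm{}$ is a double sum which is strictly off-diagonal ($s<t$) and whose outer index $t$ is naturally adapted to $\{\G_t\}$. For fixed $m$ and fixed $T$ the factor $\snorm{\Phi_T}_F^{-1}$ is a deterministic scalar, so it suffices to treat the partial sums in the outer index. Concretely, for $2\le r\le T$ I would set
\[
S_r=\snorm{\Phi_T}_F^{-1}\sum_{t=2}^{r}\sum_{s=1}^{t-1}\Phi^{(\lambda)}_{T,t,s}\big(\dm{t}\otimes\dm{s}\big),
\]
so that $S_T=\snorm{\Phi_T}_F^{-1}\ldm{}$, and verify that $\{S_r\}$ is a martingale in $\op^2_{S_2(H)}$ with respect to $\{\G_r\}$; the asserted statement is the terminal case $r=T$.

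First I would check integrability and adaptedness. By \autoref{prop:propertiesDM}(i) each $\dm{t}\in\op^4_H$ and is $\G_t$-measurable, so via the isometry $\mathcal{T}$ and Cauchy--Schwarz $\E\snorm{\dm{t}\otimes\dm{s}}_2^2=\E\big(\|\dm{t}\|_H^2\|\dm{s}\|_H^2\big)\le\|\dm{t}\|_{\hi,4}^2\|\dm{s}\|_{\hi,4}^2<\infty$, whence $\dm{t}\otimes\dm{s}\in\op^2_{S_2(H)}$; applying the deterministic bounded operator $\Phi^{(\lambda)}_{T,t,s}$ preserves this since $\snorm{\Phi A}_2\le\snorm{\Phi}_\infty\snorm{A}_2$, and a finite sum stays in $\op^2_{S_2(H)}$, so $S_r\in\op^2_{S_2(H)}$. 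For $s<t\le r$ the factors $\dm{t},\dm{s}$ are $\G_r$-measurable and both the outer product and $\Phi^{(\lambda)}_{T,t,s}$ preserve measurability, so $S_r$ is $\G_r$-measurable. The increment $S_r-S_{r-1}=\snorm{\Phi_T}_F^{-1}\sum_{s=1}^{r-1}\Phi^{(\lambda)}_{T,r,s}\big(\dm{r}\otimes\dm{s}\big)$ collects exactly the new outer index $t=r$; since $\Phi^{(\lambda)}_{T,r,s}$ is a deterministic bounded linear operator on $S_2(H)$ it commutes with $\E[\cdot\,|\,\G_{r-1}]$, so the martingale property reduces to showing $\E[\dm{r}\otimes\dm{s}\,|\,\G_{r-1}]=0$ for each $s\le r-1$.

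The hard part will be this last identity, because $\dm{s}$ is $\G_{r-1}$-measurable while $\dm{r}$ is a martingale difference with $\E[\dm{r}\,|\,\G_{r-1}]=0$ (\autoref{prop:propertiesDM}(i)), and I want the vector-valued ``take out what is known'' rule $\E[\dm{r}\otimes\dm{s}\,|\,\G_{r-1}]=\E[\dm{r}\,|\,\G_{r-1}]\otimes\dm{s}=0$. Since the outer product is bilinear rather than linear in its two $H$-valued arguments, this needs justification for Bochner conditional expectations. I would prove it by approximating the $\G_{r-1}$-measurable factor $\dm{s}\in\op^4_H\subset\op^2_H$ by $\G_{r-1}$-measurable simple functions $\dm{s}^{(n)}=\sum_k\mathbf{1}_{A_k}v_k$ with $A_k\in\G_{r-1}$ and $v_k\in H$; for each such $n$ the identity reduces to the facts that $u\mapsto u\otimes v_k$ is a bounded linear map $H\to S_2(H)$, hence commutes with conditional expectation, and that $\G_{r-1}$-indicators factor out, giving $\E[\dm{r}\otimes\dm{s}^{(n)}\,|\,\G_{r-1}]=\E[\dm{r}\,|\,\G_{r-1}]\otimes\dm{s}^{(n)}$. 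Passing $n\to\infty$ then uses continuity and bilinearity of the outer product together with Cauchy--Schwarz, $\E\snorm{\dm{r}\otimes(\dm{s}-\dm{s}^{(n)})}_2\le\|\dm{r}\|_{\hi,2}\|\dm{s}-\dm{s}^{(n)}\|_{\hi,2}\to0$, and contractivity of conditional expectation to exchange the limit with $\E[\cdot\,|\,\G_{r-1}]$. This yields $\E[S_r-S_{r-1}\,|\,\G_{r-1}]=0$, i.e.\ $\E[S_r\,|\,\G_{r-1}]=S_{r-1}$, and taking $r=T$ establishes the claim.
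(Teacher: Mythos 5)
Your proposal is correct and takes essentially the same route as the paper's proof: adaptedness, membership in $\op^2_{S_2(H)}$, and the vanishing conditional expectation of each outer-index increment, obtained from the martingale-difference property of $\dm{t}$ (Proposition \autoref{prop:propertiesDM}(i)) together with $\G_{t-1}$-measurability of the inner weighted sum. The only deviations are cosmetic: for integrability you bound each summand by Cauchy--Schwarz with fourth moments where the paper uses orthogonality of the increments plus \autoref{lem:Burkh} to get a bound of order $\snorm{\Phi_T}^2_F$ uniform in $T$, and you additionally justify, via simple-function approximation of the $\G_{r-1}$-measurable factor, the vector-valued ``take out what is known'' identity $\E[\dm{r}\otimes Y \,\vert\, \G_{r-1}]=\E[\dm{r}\,\vert\,\G_{r-1}]\otimes Y$ that the paper asserts without proof.
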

\begin{proof}[Proof of Proposition \autoref{prop:Msq}]
It is immediate that $\ldm{}$ is adapted to the filtation $\G_T$. Secondly, from the properties of the operators $\{\Phi_{T,s,t}\}$ we can write
\begin{align*}
\ldm{} =  \sum_{t=2}^{T} \sum_{s=1}^{t-1} \Phi^{(\lambda)}_{T,s,t} \Big(\dm{t} \otimes \dm{s}\Big) =
\sum_{t=2}^{T}  \dm{t} \otimes \Big(\sum_{s=1}^{t-1} \phi^{(\lambda)}_{T,s,t} \dm{s}\Big).
\end{align*} 
From Proposition \autoref{prop:propertiesDM}(i), $\dm{t}$ forms a stationary martingale difference sequence in $\op^4_H$ with respect to $\{\G_t\}$. Hence, using orthogonality of the increments and \autoref{lem:Burkh} yields
\[
\E \snorm{\ldm{}}^2_2 \le  \sum_{t=2}^{T}  \E\bigsnorm{\dm{t} \otimes \Big(\sum_{s=1}^{t-1} \phi^{(\lambda)}_{T,s,t} \dm{s}\Big)}^2_2 \le   \|\dm{0}\|^2_{\hi,4}  \|\dm{0}\|^2_{\hi,4}   \sum_{t=2}^{T} \sum_{s=1}^{t-1}\snorm{\phi^{(\lambda)}_{T,s,t}}^2_\infty. 
\]
Noting that $\sum_{t=2}^{T}  \sum_{s=1}^{t-1}\snorm{\phi^{(\lambda)}_{T,s,t}}^2_\infty \approx 1/2 \snorm{\Phi_T}^{-1}_{ F}$, we obtain $\snorm{\Phi_T}^{-1}_{F} \E \snorm{\ldm{} }^2_2 < \infty$. Finally,  observe that 
\begin{align*}
 \E \Big[ \dm{t} \otimes \Big(\sum_{s=1}^{t-1} \phi^{(\lambda)}_{T,s,t} \dm{s}\Big) \big\vert \G_{t-1}\Big] = \E \Big[  \dm{t} \big\vert \G_{t-1}\Big]\otimes \Big(\sum_{s=1}^{t-1} \phi^{(\lambda)}_{T,s,t} \dm{s}\Big)=O_H\end{align*}
 where we used that $\sum_{s=1}^{t-1} \phi^{(\lambda)}_{T,s,t} \dm{s}$ is $\G_{t-1}$-measurable and that $\dm{t}$ is a $H$-valued martingale with respect to $\G_t$. The result now follows. 
\end{proof}
The following theorem states the distributional properties of the quadratic form. 
\begin{thm}[asymptotic normality of $\hat{\mathcal{Q}}^{\lambda}_T$] \label{thm:funcdist}
Let $\{X_t\}$ be a random sequence with paths in a separable Hilbert space $H$ for which assumption \eqref{as:depstruc} holds with $p=4$ and suppose that the sequence $\{{\Phi}^{\lambda}_{T}\}$ satisfies \autoref{as:phi}. Then the quadratic form in \eqref{eq:Q1} satisfies
\[
{(\snorm{\,\Phi_T}^2_{F})}^{-1/2}
\big(\hat{\mathcal{Q}}^{\lambda_j}_T-\E (\hat{\mathcal{Q}}_T^{\lambda_j})\big)_{j=1,\ldots,d} \Rightarrow \big( \breve{\mathcal{Q}}^{\lambda_j}\big)_{j=1,\ldots, d} 
\]
where, $\breve{\mathcal{Q}}^{\lambda_j}, j = 1, \ldots, d$ are jointly complex Gaussian elements of $S_2(H)$
\[
\begin{pmatrix}
\Re(\breve{\mathcal{Q}^{\lambda_j}}) \\ \Im(\breve{\mathcal{Q}}^{\lambda_j}) 
\end{pmatrix}_{j=1,\ldots, d}
\sim
\mathcal{N}_{(S_2(H))^d \times (S_2(H))^d}
\begin{pmatrix}
\begin{pmatrix}O_H \\ O_H
\end{pmatrix},
\frac{1}{2}
\begin{pmatrix}
\Re(\Gamma+ \Sigma) & \Im(-\Gamma+ \Sigma) \\
\Im(\Gamma+ \Sigma) & \Re(\Gamma- \Sigma)
\end{pmatrix}
\end{pmatrix}.
\]
The $(i,j)$-th element of the covariance operator is given by 
\[\Gamma_{i,j}=\eta(\lambda_i \pm \lambda_j) 4\pi^2\Big( \F^{(\lambda_j)}\widetilde{\otimes}\F^{(\lambda_j)} +\mathrm{1}_{\{0,\pi\}}\F^{(\lambda_j)} \widetilde{\otimes}_{\top} \F^{(\lambda_j)}\Big)\] and of the pseudocovariance operator by
\[\Sigma_{i,j} =\eta(\lambda_i \pm \lambda_j) 4\pi^2\Big(\mathrm{1}_{\{0,\pi\}}\F^{(\lambda_j)}\widetilde{\otimes}\F^{(\lambda_j)} +\F^{(\lambda_j)} \widetilde{\otimes}_{\top} \F^{(\lambda_j)}\Big),\]
and where 
$\eta(x) =1$ for $x = 2\pi z, z \in \znum$ and zero otherwise.
\end{thm}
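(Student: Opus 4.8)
The plan is to combine the decomposition \eqref{eq:Qdecom} with the martingale approximation \eqref{eq:ldm} and then apply a central limit theorem for Hilbert-space-valued martingale difference arrays in $S_2(H)$. First I would dispose of the diagonal block $\sum_{t}\Phi_{T,t,t}(X_t\otimes X_t)$. After subtracting its mean, its $\op^2_{S_2(H)}$-norm is of order $T\snorm{\phi_{T,0}}^2_\infty$ (using $X_t\otimes X_t\in\op^2_{S_2(H)}$, guaranteed by $p=4$ in \autoref{as:depstruc}); by \autoref{as:phi}(ii) the single-weight norm is $o(\varrho_T^2)$, and by \autoref{as:phi}(i) this is $o(\snorm{\Phi_T}^2_F)$. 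Hence the centered diagonal is asymptotically negligible after normalization, and only the off-diagonal block together with its adjoint survives. The adjoint is handled throughout by continuity of the map $\dagger$ on $S_2(H)$, so it suffices to treat the lower-triangular sum.

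Next I would replace the centered off-diagonal sum $\sum_{t,s}\Phi^{(\lambda)}_{T,s,t}(X_s\otimes X_t-C_{t-s})$ by $\ldm{}$. Writing $X_t=\sum_{j\le t}P_j(X_t)$ as in \eqref{eq:ProjX}, passing to the $m$-truncated differences $\dm{t}$ of Proposition \autoref{prop:propertiesDM}(i), and applying a Burkholder-type bound (\autoref{lem:Burkh}) increment by increment, one shows that $\snorm{\Phi_T}^{-1}_F\big(\text{off-diagonal}-\ldm{}\big)$ is controlled in $\op^2_{S_2(H)}$ by the projection tail $\sum_{j>m}\|P_0(X_j)\|_{\hi,4}$ times a bounded weight factor. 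By \autoref{as:depstruc} this tail vanishes as $m\to\infty$, which licenses the iterated-limit scheme: prove the CLT for $\snorm{\Phi_T}^{-1}_F\ldm{}$ at fixed $m$ and then let $m\to\infty$.

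For fixed $m$, Proposition \autoref{prop:Msq} identifies $\snorm{\Phi_T}^{-1}_F\ldm{}$ as a martingale array in $\op^2_{S_2(H)}$, so I would apply a Hilbertian martingale CLT to the vector of increments stacked over $\lambda_1,\ldots,\lambda_d$. The conditional Lindeberg condition follows from the $\op^4_H$ moment bound together with the negligibility of individual weights in \autoref{as:phi}(ii), while \autoref{as:phi}(iii)--(iv) control the predictable quadratic variation. Its iterated limit is a trace functional of $\var(\dm{t})$, which by Proposition \autoref{prop:propertiesDM}(iii) equals the trace functionals of $\F^{(\lambda)}\widetilde{\otimes}\F^{(\lambda)}$; separating real and imaginary parts and tracking the oscillatory cross-terms $e^{\im(\lambda_i\mp\lambda_j)t}$ produces the indicator $\eta(\lambda_i\pm\lambda_j)$, and at the self-conjugate frequencies $0,\pi$ the extra $\widetilde{\otimes}_{\top}$ and $\mathrm{1}_{\{0,\pi\}}$ contributions, yielding exactly $\Gamma_{i,j}$ and $\Sigma_{i,j}$. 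Since $S_2(H)$ is infinite-dimensional, finite-dimensional convergence must be upgraded by tightness: here I would use the Cauchy property of Proposition \autoref{prop:propertiesDM}(ii), together with the finite trace bound in part (iii), to show uniform smallness of the Hilbert-Schmidt tail.

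I expect the main obstacle to be the martingale CLT in $S_2(H)$ itself: proving that the predictable quadratic variation converges as an operator rather than merely in finitely many coordinates, and simultaneously securing tightness, all while keeping the double limit in $m$ and $T$ uniformly controlled. A secondary difficulty is the careful real/imaginary bookkeeping that separates the covariance $\Gamma$ from the pseudocovariance $\Sigma$ and produces the $\widetilde{\otimes}_{\top}$ terms with the correct $\mathrm{1}_{\{0,\pi\}}$ indicators.
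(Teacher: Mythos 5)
Your overall architecture is the paper's: the decomposition \eqref{eq:Qdecom}, negligibility of the centered diagonal, approximation by $\ldm{}+\ldm{\dagger}$, a CLT at fixed $m$, tightness in $S_2(H)$ via the trace identities of Proposition \autoref{prop:propertiesDM}(iii), and the iterated limit $T\to\infty$ then $m\to\infty$ closed by a $\lim_{m}\limsup_{T}$ approximation (the paper's \autoref{lem:bilaprox}). Two remarks on execution. First, your diagonal bound tacitly needs $\bignorm{\sum_{t}(X_t\otimes X_t-\E X_0\otimes X_0)}_{S_2,2}=O(\sqrt{T})$; finiteness of second moments of $X_t\otimes X_t$ alone does not give this for dependent data --- the paper proves it (\autoref{eq:nullpart}) through the projection decomposition and $\sum_j\nu_{\hi,4}(X_j)<\infty$. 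Second, the ``main obstacle'' you identify --- operator-level convergence of the predictable quadratic variation for a Hilbertian martingale CLT --- is precisely what the paper sidesteps: it scalarizes via the Cram\'er--Wold device on projections $\inprod{\cdot}{\chi_{l\lpr}}$, applies a one-dimensional complex martingale CLT after inserting a $4m$-gap (the truncated sums $N^{(\lambda)}_{m,t}$ in \eqref{eq:N4m}, with \autoref{lem:secM} showing the band $|t-s|<4m$ is negligible, so that $\dm{t}$ and $N^{(\lambda)}_{m,t}$ are independent by $m$-dependence), and then recovers weak convergence in $S_2(H)$ from Suquet's tightness criterion (\autoref{lem:tight}) using pointwise convergence of $\E|\xi^{\lambda}_{T,m}(\psi_{l\lpr})|^2$ together with convergence of the full trace. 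Your plan of a direct operator-valued martingale CLT would be harder to substantiate, and your fallback (fdds plus tightness) is in substance the paper's route.

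The genuine gap is in your martingale-approximation step. You claim the error $\snorm{\Phi_T}^{-1}_{F}\,(\text{off-diagonal}-\ldm{})$ is ``controlled by the projection tail $\sum_{j>m}\|P_0(X_j)\|_{\hi,4}$ times a bounded weight factor.'' That is false as stated, and the iterated-limit scheme would not close with it. The error has two parts of different natures: (a) replacing $X_t$ by $X^{(m)}_t$, which is indeed a tail-type term vanishing as $m\to\infty$ uniformly in $T$ (the paper's \autoref{lem:mdapprox}); and (b) replacing the $m$-dependent quadratic form $\ldd{}-\E\ldd{}$ by the martingale form $\ldm{}$, which does \emph{not} shrink with $m$ at all --- $\dm{s}$ aggregates frequency-twisted future projections, so the discrepancy is a boundary effect that vanishes only as $T\to\infty$ at fixed $m$, and only by exploiting the structure $\phi^{(\lambda)}_{T,t}=A_{T,t}e^{\im\lambda t}$, summation by parts, and the weight-smoothness condition \autoref{as:phi}(iii), $\sum_{t}\snorm{A_{T,t}-A_{T,t-1}}^2_{\infty}=o(\varrho_T^2)$. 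This is the content of the paper's \autoref{lem:marappr}, whose bound is of order $m^2\sqrt{T}\big(\max_t\snorm{A_{T,t}}^2_{\infty}+m\sum_s\snorm{A_{T,s-t}-A_{T,s-t-1}}^2_{\infty}\big)^{1/2}$ relative to $\snorm{\Phi_T}_{F}\asymp\sqrt{T}\varrho_T$; it is, in fact, the reason condition (iii) appears in \autoref{as:phi}. Your sketch allocates (iii) only to the quadratic-variation step, so with the tools you list, part (b) is unproved, and without it the analogue of \autoref{lem:bilaprox} --- the hinge that upgrades ${\xi}^{\lambda}_{T,m}\Rightarrow_T{\xi}^{\lambda}_{m}\Rightarrow_m\breve{\mathcal{Q}}^{\lambda}$ to ${\xi}^{\lambda}_{T}\Rightarrow_T\breve{\mathcal{Q}}^{\lambda}$ --- fails.
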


In particular, for distinct frequencies $\lambda_1, \ldots, \lambda_d \in [0,\pi]$, $\Gamma$ and $\Sigma$ are $d \times d$ diagonal matrices with $S_1(H \otimes H)$-valued components and hence for such choice of frequencies, the components of $\big(\breve{\mathcal{Q}}^{\lambda_j}\big)_{j=1,\ldots, d}$ are asymptotically independent.
\begin{proof}[Proof of \autoref{thm:funcdist}]
We consider the sequence of processes $\{{\xi}^{\lambda}_{T} :T \in \mathbb{N}\}$ where
\[
{\xi}^{\lambda}_{T}:={(\snorm{\Phi^{\lambda}_T}^2_{F})}^{-1/2} (\hat{\mathcal{Q}}^{\lambda}_T-\E \hat{\mathcal{Q}}^{\lambda}_T). 
\]
Observe that $\{{\xi}^{\lambda}_{T} :T \in \mathbb{N}\}$ is a measurable stochastic processes with sample paths in the Hilbert space $S_2(H)$. We shall \blu{verify the two conditions in \autoref{lem:cremka}} to show weak convergence \blu{in} $S_2(H)$ \citep[see e.g.,][]{bil68}.
\begin{lemma}[weak convergence]\label{lem:cremka}
 Let $\{{\xi}_{T} :T \in \mathbb{N}\}$ be a stochastic process with sample paths in a separable Hilbert space. If the following two conditions are satisfied
\vspace{-5pt}
\begin{enumerate}\itemsep-0.3ex
\item[$\mathrm{i})$]The finite dimensional distributions of ${\xi}_{T}$ converge to those of ${\xi}$ a.e.;
\item[$\mathrm{ii})$] The family 
of laws $\mathcal{P}:=(\mathbb{P}_T)_{T \in \mathbb{N}}$ of $\{{\xi}_{T} :T \in \mathbb{N}\}$ is tight.
\end{enumerate}
then,
 ${\xi}_{T} \Rightarrow_T {\xi}$. 
\end{lemma}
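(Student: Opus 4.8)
The plan is to deduce the full weak convergence $\xi_T \Rightarrow_T \xi$ from the two hypotheses by the classical Prokhorov subsequence scheme, exploiting that a separable Hilbert space is Polish. Write $\mathbb{H}$ for the ambient separable Hilbert space (in the application $\mathbb{H}=S_2(H)$) and fix once and for all an orthonormal basis $\{e_k\}_{k\ge 1}$. For a finite index set $F=\{k_1,\ldots,k_n\}$ let $\pi_F\colon \mathbb{H}\to \mathbb{R}^n$ be the coordinate projection $x\mapsto(\inprod{x}{e_{k_1}},\ldots,\inprod{x}{e_{k_n}})$; each $\pi_F$ is bounded and hence continuous, and the law of $\pi_F(\xi_T)$ is exactly the finite-dimensional distribution appearing in condition (i).

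First I would invoke Prokhorov's theorem: since $\mathbb{H}$ is Polish, the tightness of $\mathcal{P}=(\mathbb{P}_T)_{T\in\mathbb{N}}$ assumed in condition (ii) implies that $\mathcal{P}$ is relatively compact in the topology of weak convergence. Consequently every subsequence of $(\mathbb{P}_T)$ admits a further subsequence $(\mathbb{P}_{T'})$ converging weakly to some Borel probability measure $\mu$ on $\mathbb{H}$; let $\eta$ denote a random element with law $\mu$.

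Next I would identify the limit $\mu$ using condition (i). By the continuous mapping theorem applied to $\pi_F$, the weak convergence $\mathbb{P}_{T'}\Rightarrow\mu$ forces $\pi_F(\xi_{T'})\Rightarrow\pi_F(\eta)$ for every finite $F$, while condition (i) gives $\pi_F(\xi_{T'})\Rightarrow\pi_F(\xi)$. Uniqueness of weak limits on $\mathbb{R}^n$ then yields that $\pi_F(\eta)$ and $\pi_F(\xi)$ have the same law for every finite $F$. The decisive point — and the step I expect to require the most care — is that on a \emph{separable} Hilbert space the finite-dimensional distributions form a determining class: the cylinder sets $\pi_F^{-1}(B)$, $B\in\mathcal{B}(\mathbb{R}^n)$, constitute a $\pi$-system generating the Borel $\sigma$-algebra $\mathcal{B}(\mathbb{H})$, where separability is precisely what lets one express open balls through countably many coordinates. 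A $\pi$-$\lambda$ (Dynkin) argument then shows that two Borel measures agreeing on all such cylinders coincide, so $\mu=\mathbb{P}_\xi$, i.e. $\eta\stackrel{d}{=}\xi$.

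Finally I would close the argument: we have shown that every weakly convergent subsequence of $(\mathbb{P}_T)$ has one and the same limit, namely the law of $\xi$. Combined with the relative compactness furnished by Prokhorov's theorem, the standard metric-space fact that a relatively compact sequence all of whose convergent subsequences share a common limit must itself converge to that limit yields $\mathbb{P}_T\Rightarrow\mathbb{P}_\xi$, that is $\xi_T\Rightarrow_T\xi$. The only genuinely nontrivial ingredient is the determining-class step, which hinges on separability of $\mathbb{H}$; everything else is the routine Prokhorov scheme together with the continuous mapping theorem.
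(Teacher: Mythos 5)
Your proposal is correct and coincides with the argument the paper implicitly relies on: the paper states this lemma without proof, citing Billingsley (1968), and the classical proof there is precisely your scheme --- Prokhorov's theorem turns tightness on the Polish space $S_2(H)$ into relative compactness, and separability makes the coordinate cylinder sets a $\pi$-system generating the Borel $\sigma$-algebra, so the finite-dimensional distributions pin down every subsequential limit and the subsequence principle yields $\xi_T \Rightarrow_T \xi$. No gap to report.
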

First we derive that, for all $m \ge 1$, \blu{${\xi}^{\lambda}_{T,m} \Rightarrow_{T} {\xi}^{\lambda}_{m}$, where $ {\xi}^{\lambda}_{m}$} defines a zero-mean Gaussian element of $S_2(H)$ and where the double indexed process is given by
\[
{\xi}^{\lambda}_{T,m}:={(\snorm{\Phi_T}^2_{F})}^{-1/2} ({\ldm{}} +{\ldm{\dagger}})
\]
with $\ldm{}$ as in \eqref{eq:ldm}. By Proposition \autoref{prop:Msq}, for every fixed $m$, $\ldm{}$ is a martingale process in $\op^2_{S_2(H)}(\Omega, \mathcal{A},\mathbb{P})$ with respect to the filtration $\{\G_T\}$. Note the same holds for ${\ldm{\dagger}}$. Let $(\chi_l)_{l \ge 1}$ be an orthonormal basis of $H$. Then $(\chi_{l,\lpr})_{l,\lpr}:=(\chi_{l} \otimes \chi_\lpr)_{l,\lpr}$ defines an orthonormal basis of $H \otimes H$ and we shall denote
\[
{\xi}^{\lambda}_{T,m}(\chi) = \langle {\xi}^{\lambda}_{T,m}, \chi \rangle.
\]
for any $\chi \in H \otimes H$. The result below shows that, for a finite set of frequencies, the finite-dimensional distributions of ${\xi}^{\lambda}_{T,m}$ for fixed $m$ converge jointly to those of $\xi^{\lambda}_{m}$ as $T \to \infty$, where these are asymptotically independent at distinct frequencies.
\begin{thm}\label{thm:fddsmain} Suppose the conditions of \autoref{thm:funcdist} hold.  Then, for a finite set of distinct frequencies $\lambda_1,\ldots, \lambda_d \in [0,\pi]$, for all $\forall m \ge 1$ and any $\chi_{l_j l_j^{\prime}} \in H \otimes H$, we have
\[
\{{\xi}^{\lambda_j}_{T,m}(\chi_{l_j l_j^{\prime}})\}_{j=1,\ldots,d} \Rightarrow_T \{{\xi}^{\lambda_j}_{m}(\chi_{l_j l_j^{\prime}})\}_{j=1,\ldots, d} \sim \mathcal{N}_{\cnum^d}\Big(0,\mathrm{diag}\big(\Gamma^{\lambda_j}_m(\chi_{l_j l_j^{\prime}})\big),\mathrm{diag}\big(\Sigma^{\lambda_j}_m(\chi_{l_j l_j^{\prime}})\big)\Big), \]
 where 
\[\Gamma^{\lambda_j} _m(\chi_{l_j l^{\prime}_j})= { 4\pi^2} \Big( \F^{(\lambda_j)}_m(\chi_{l_jl_j}) \overline{\F^{(\lambda_j)}_m(\chi_{l^{\prime}_j l^{\prime}_j})} +\mathrm{1}_{\{0,\pi\}}\big(\F^{(\lambda_j)}_m(\chi_{l_j l^{\prime}_j})  \F^{(\lambda_j)}_m(\chi_{l_j l^{\prime}_j})\big) \Big) \tageq \label{eq:Gamfdds}\] 
and 
\[\Sigma^{\lambda_j}_m(\chi_{l_j l^\prime_j}) = { 4\pi^2} \Big(\mathrm{1}_{\{0,\pi\}}\big(\F^{(\lambda_j)}_m(\chi_{l^{\prime}_j l^{\prime}_j})\F^{(\lambda_j)}_m(\chi_{l_jl_j}) \big)+ \F^{(\lambda_j)}_m(\chi_{l_j l^{\prime}_j}) \F^{(-\lambda_j)}_m(\chi_{l_j l^{\prime}_j})  \Big),\]
where $\F^{(\lambda_j)}_m(\chi_{l_j l^{\prime}_j})  = \big(\E(\dmi{0}{j} \otimes \dmi{0}{j})\big)(\chi_{l_j l^{\prime}_j})$.
\end{thm}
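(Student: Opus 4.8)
The plan is to establish the claimed joint convergence by a martingale central limit theorem, exploiting the martingale structure already isolated in Proposition~\autoref{prop:Msq}. Since the target is a $d$-dimensional \emph{complex} Gaussian vector, I would first invoke the Cram\'er--Wold device: fix real scalars $(a_j,b_j)_{j=1}^d$ and reduce the claim to convergence of the single real-valued sum $\sum_{j=1}^d \big(a_j\,\Re\,{\xi}^{\lambda_j}_{T,m}(\chi_{l_jl_j'}) + b_j\,\Im\,{\xi}^{\lambda_j}_{T,m}(\chi_{l_jl_j'})\big)$ to the one-dimensional Gaussian law whose variance is read off from the stated $\Gamma^{\lambda_j}_m,\Sigma^{\lambda_j}_m$. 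By Proposition~\autoref{prop:Msq} both $\snorm{\Phi_T}^{-1}_F\ldm{}$ and $\snorm{\Phi_T}^{-1}_F\ldm{\dagger}$ are martingales with respect to the \emph{same} filtration $\{\G_t\}$, and since all frequencies are filtered by this same $\{\G_t\}$, the above linear combination is again a scalar real $\{\G_t\}$-martingale; the problem thus becomes a genuine scalar martingale CLT.

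Writing $\ldm{}=\sum_{t=2}^T \dm{t}\otimes W^{(\lambda)}_{T,t}$ with $W^{(\lambda)}_{T,t}=\sum_{s=1}^{t-1}\phi^{(\lambda)}_{T,t,s}\dm{s}$ as in the proof of Proposition~\autoref{prop:Msq}, the projection onto $\chi_{l_jl_j'}=\chi_{l_j}\otimes\chi_{l_j'}$ factorizes as $\inprod{\dm{t}}{\chi_{l_j}}\,\overline{\inprod{W^{(\lambda_j)}_{T,t}}{\chi_{l_j'}}}$, so the linear combination takes the form $\sum_{t=2}^T Z_{T,t}$ for an explicit $\{\G_t\}$-martingale difference array $Z_{T,t}$ whose $\G_{t-1}$-measurable factor is built from the $W^{(\lambda_j)}_{T,t}$ and whose innovation is $\dm{t}$. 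The martingale CLT (e.g.\ Hall--Heyde / McLeish) requires a conditional Lindeberg condition and convergence of the conditional variance. The Lindeberg condition I would obtain via a Lyapunov bound: Assumption~\autoref{as:depstruc} with $p=4$ gives $\dm{t}\in\op^4_H$, hence (by a Burkholder-type inequality as in \autoref{lem:Burkh}) a uniform $\op^4$-control of each summand, while Assumption~\autoref{as:phi}(ii), forcing $\max_t\snorm{\phi^{(\lambda)}_{T,t}}_\infty^2=o(\varrho_T^2)$, makes the maximal individual contribution asymptotically negligible relative to $\snorm{\Phi_T}_F^2$.

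The substance of the argument is the conditional-variance condition $\sum_{t=2}^T \E[Z_{T,t}^2\mid\G_{t-1}]\to\sigma^2$ in probability, which I would handle in two stages. First I would compute the expectation $\E\sum_t Z_{T,t}^2$, expanding in the second-order structure of the $m$-dependent martingale difference sequence $\{\dm{s}\}$ and using the normalisation $\snorm{\Phi_T}_F^2=\sum_{t,s}\snorm{\phi_{T,t-s}}_\infty^2$. Here Proposition~\autoref{prop:propertiesDM} identifies $\var(\dm{0})=\F^{(\lambda)}_m$ and supplies the needed $\op^2_{S_2(H)}$-limits, producing precisely the product terms $\F^{(\lambda_j)}_m(\chi_{l_jl_j})\overline{\F^{(\lambda_j)}_m(\chi_{l_j'l_j'})}$ and $\F^{(\lambda_j)}_m(\chi_{l_jl_j'})\F^{(\lambda_j)}_m(\chi_{l_jl_j'})$, the constant $4\pi^2$ coming from the $(2\pi)^{-1/2}$ normalisations, and the pseudo-covariance terms carrying the indicator $\mathrm{1}_{\{0,\pi\}}$ (which survive only when $e^{2\im\lambda_j t}$ does not oscillate, i.e.\ at $\lambda_j\in\{0,\pi\}$). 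The frequency factor $\eta(\lambda_i\pm\lambda_j)$, and hence asymptotic independence across distinct frequencies, emerges from the same computation: cross-frequency contributions carry oscillating factors $\sum_t e^{\im(\lambda_i-\lambda_j)t}$ weighted by the kernel, which vanish in the limit unless $\lambda_i\equiv\pm\lambda_j\pmod{2\pi}$; Assumptions~\autoref{as:phi}(i),(iii) are what guarantee these weighted Ces\`aro-type sums converge to the stated limits rather than to a degenerate or mis-scaled value.

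The second and hardest stage is \emph{concentration}: showing $\var\big(\sum_t\E[Z_{T,t}^2\mid\G_{t-1}]\big)\to0$, so that the conditional variance shares the limit of its mean. This is delicate precisely because $\{\dm{t}\}$ is only $m$-dependent rather than independent, so $\E[\dm{t}\otimes\dm{t}\mid\G_{t-1}]$ is genuinely random and the conditional variance is a quadratic functional of the $\G_{t-1}$-measurable blocks $W^{(\lambda_j)}_{T,t}$. Estimating its variance forces one to control fourth-order (cumulant-type) interactions between the innovation $\dm{t}$ and the overlapping partial sums $W_{T,t},W_{T,t'}$ across $t\ne t'$; this is exactly where Assumption~\autoref{as:phi}(iv) enters, bounding the overlap sum $\sum_{j}\sum_{s<j}\snorm{\sum_{t>j}\phi_{T,s-t}\widetilde{\otimes}\phi_{T,j-t}}_\infty^2=o(\snorm{\Phi_T}_F^4)$ and thereby forcing the off-diagonal overlap contributions to be negligible against the square of the normalisation. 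I expect this fourth-order bookkeeping --- combining the $p=4$ moment summability of Assumption~\autoref{as:depstruc}, the Burkholder-type inequalities of \autoref{sec:Ap1}, and the overlap condition (iv) --- to be the main obstacle, with the Cram\'er--Wold reduction, the Lindeberg condition, and the mean computation being comparatively routine. Once both conditions hold, the martingale CLT delivers the stated one-dimensional Gaussian limit for every linear combination, and Cram\'er--Wold assembles these into the claimed joint complex-Gaussian law with covariance $\mathrm{diag}(\Gamma^{\lambda_j}_m)$ and pseudo-covariance $\mathrm{diag}(\Sigma^{\lambda_j}_m)$.
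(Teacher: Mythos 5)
Your outline follows the paper's route at every structural joint --- Cram\'er--Wold reduction, a martingale CLT with conditional Lindeberg condition, an expectation computation producing $\Gamma^{\lambda}_m,\Sigma^{\lambda}_m$ with oscillating sums killing the cross-frequency and (for $\lambda\notin\{0,\pi\}$) pseudo-covariance terms, and concentration of the conditional variance via Assumption~\autoref{as:phi}(iv) --- but there is a genuine gap in the two steps you call comparatively routine, and it is the same gap in both. You work throughout with the full predictable factor $W^{(\lambda)}_{T,t}=\sum_{s=1}^{t-1}\phi^{(\lambda)}_{T,s-t}\,\dm{s}$, whereas the paper first discards the summands with $t-4m<s\le t-1$ (shown negligible in \autoref{lem:secM}) and replaces $W^{(\lambda)}_{T,t}$ by $N^{(\lambda)}_{m,t}=\sum_{s=1}^{t-4m}\phi^{(\lambda)}_{T,s-t}\,\dm{s}$, which is \emph{independent} of $\dm{t}$: indeed $\dm{t}$ is $\sigma(\epsilon_{t-m},\ldots,\epsilon_t)$-measurable while $N^{(\lambda)}_{m,t}$ is $\G_{t-4m}$-measurable. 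Without this blocking device, your Lyapunov bound for the Lindeberg condition fails under $p=4$: the martingale differences are products $\dm{t}(v)\,\overline{W_{T,t}(u)}$, and $\E\vert\dm{t}(v)\,\overline{W_{T,t}(u)}\vert^4$ does not factor; splitting it by Cauchy--Schwarz would require \emph{eighth} moments, which Assumption~\autoref{as:depstruc} does not supply, and no ``uniform $\op^4$-control of each summand'' via \autoref{lem:Burkh} circumvents this. With the gap one has $\E\vert\dm{t}(v)\overline{N^{(\lambda)}_{m,t}(u)}\vert^4=\E\vert\dm{t}(v)\vert^4\,\E\vert N^{(\lambda)}_{m,t}(u)\vert^4$, and \autoref{lem:Burkh} then gives exactly the bound \eqref{eq:DU4}.

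The same device is what makes your conditional-variance step well posed. Since $\{\dm{t}\}$ is only $m$-dependent, conditioning on $\G_{t-1}$ does not by itself reduce the conditional variance to a quadratic functional of the predictable factors; the paper decomposes $\E[\,\cdot\,\vert\G_{t-1}]=\sum_{k=1}^{m}P_{t-k}(\cdot)+\E[\,\cdot\,\vert\G_{t-m-1}]$, proves the projection part is of lower order, and then exploits that $N^{(\lambda)}_{m,t}$ is $\G_{t-m-1}$-measurable while $\dm{t}$ is independent of $\G_{t-m-1}$, so the conditional expectation factorizes into deterministic $\dm{}$-moments (giving $2\pi\F^{(\lambda)}_m$-inner products) times $\vert N^{(\lambda)}_{m,t}\vert^2$-type terms. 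Only after this reduction does your concentration stage arise, and it is then precisely \autoref{lem:Zt}, an $\op^2_{S_2}$ law of large numbers for $\snorm{\Phi_T}^{-2}_{F}\sum_t M^{(\lambda_1)}_t\otimes M^{(\lambda_2)}_t$ proved via the projection decomposition and Assumption~\autoref{as:phi}(i),(iv) --- so your instinct that (iv) carries the concentration is right --- while the cross-frequency and pseudo-covariance cancellations are handled by \autoref{lem:inpsZt} via summation by parts under conditions (ii) and (iii) of Assumption~\autoref{as:phi} (not (i) and (iii) as you suggest). In short: you identify the correct ingredients and the correct hard step, but as written the plan stalls at the Lindeberg verification and at the factorization of the conditional variance; the $4m$-gap truncation together with \autoref{lem:secM} is the missing idea that makes both go through at $p=4$.
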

The proof is tedious and relegated to \autoref{sec:Ap_fdds}. Next, we show that $\forall m \ge 1, \, \{{\xi}^{\lambda}_{T,m}, T\ge 1\}$ is tight. In order to verify tightness we shall use the following result, which is a particular case of \citep[][Theorem 3]{Suquet1996} who considers tightness criteria for more general Schauder decomposable Banach spaces.
\begin{lemma}[tightness on a separable Hilbert space]\label{lem:tight}
Let $(\chi_{l \lpr})$ be an orthonormal basis of $H \otimes H$. A family of probability measures $\mathcal{P}:=(\mathbb{P}_T)_{T \in \mathbb{N}}$ 
on $S_2(H)$ is tight if and only if
\begin{enumerate}\itemsep-0.3ex
\item[$\mathrm{i})$] $\forall k \ge 1: \quad \lim_{h \to \infty} \sup_{T} \mathbb{P}_T\Big(\Big\{x \in S_2(H): \sum_{l ,\lpr< k}|\inprod{x}{\chi_{l \lpr}}|^2 > h \big\}\Big)=0$;
\item[$\mathrm{ii})$] $\forall \epsilon>0: \quad \lim_{k \to \infty} \sup_{T} \mathbb{P}_T\Big(\big\{x \in S_2(H): \sum_{l, \lpr: l+\lpr > k}|\inprod{x}{\chi_{l \lpr}}|^2 > \epsilon \big\}\Big)=0$.
\end{enumerate}
\end{lemma}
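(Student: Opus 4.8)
The plan is to reduce the statement to the classical compactness criterion in a separable Hilbert space, read off through the coordinate system supplied by the orthonormal basis $(\chi_{l\lpr})$. Identifying each $x \in S_2(H)$ with its coordinate sequence $(\inprod{x}{\chi_{l\lpr}})_{l,\lpr\ge1}$, I write $R_k x = \sum_{l+\lpr>k}\inprod{x}{\chi_{l\lpr}}\chi_{l\lpr}$ for the tail beyond the finite triangle $\{l+\lpr\le k\}$, so that $\snorm{R_k x}_2^2 = \sum_{l+\lpr>k}|\inprod{x}{\chi_{l\lpr}}|^2$ is exactly the quantity appearing in (ii). The only topological input I need is that a set $K \subseteq S_2(H)$ is relatively compact if and only if it is bounded and has \emph{equismall tails}, i.e. $\lim_{k\to\infty}\sup_{x\in K}\snorm{R_k x}_2^2 = 0$; since each sublevel set of a finite block sum or of $x\mapsto\snorm{R_k x}_2^2$ is closed, a bounded closed set with equismall tails is compact. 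Because tightness of $\mathcal{P}$ means precisely that for every $\epsilon>0$ there is a compact $K$ with $\sup_T\mathbb{P}_T(K^c)<\epsilon$, the whole lemma becomes a translation between this compactness criterion and probabilistic control of the finite blocks and the tails $R_k$ under the $\mathbb{P}_T$, uniformly in $T$.

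For \textbf{necessity} I would start from tightness and fix an accuracy $\delta>0$, obtaining a compact $K$ with $\sup_T\mathbb{P}_T(K^c)<\delta$. Boundedness of $K$, say $\snorm{x}_2\le r$ on $K$, gives $\sum_{l,\lpr<k}|\inprod{x}{\chi_{l\lpr}}|^2\le r^2$ on $K$ for every $k$, so $\{\,\sum_{l,\lpr<k}|\inprod{x}{\chi_{l\lpr}}|^2>h\,\}\subseteq K^c$ once $h\ge r^2$; letting $h\to\infty$ and then $\delta\to0$ yields (i). For (ii), equismall tails of $K$ give, for any target $\epsilon'$, an index $k_0$ with $\sup_{x\in K}\snorm{R_k x}_2^2\le\epsilon'$ for $k\ge k_0$, whence $\{\snorm{R_k x}_2^2>\epsilon'\}\subseteq K^c$ and $\sup_T\mathbb{P}_T(\snorm{R_k x}_2^2>\epsilon')\le\delta$ for all $k\ge k_0$; letting $k\to\infty$ and then $\delta\to0$ gives (ii).

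The \textbf{sufficiency} direction carries the real work. Fixing $\epsilon>0$, I would first use (ii) to choose, for every $n\ge1$, an index $k_n$ with $\sup_T\mathbb{P}_T(\snorm{R_{k_n}x}_2^2>1/n)<\epsilon/2^{n+1}$, and then use (i) with $k=k_1$ to pick a level $h$ with $\sup_T\mathbb{P}_T(\sum_{l,\lpr<k_1}|\inprod{x}{\chi_{l\lpr}}|^2>h)<\epsilon/2$. Setting
\[
K=\Big\{x:\sum_{l,\lpr<k_1}|\inprod{x}{\chi_{l\lpr}}|^2\le h\Big\}\cap\bigcap_{n\ge1}\big\{x:\snorm{R_{k_n}x}_2^2\le 1/n\big\},
\]
I would verify that $K$ is closed; bounded, since the coordinates outside the block $\{l,\lpr<k_1\}$ lie in $\{l+\lpr>k_1\}$ and so $\snorm{x}_2^2\le h+\snorm{R_{k_1}x}_2^2\le h+1$ on $K$; and has equismall tails, since $\snorm{R_{k_n}x}_2^2\le1/n$ forces $\sup_{x\in K}\snorm{R_k x}_2^2\to0$ by monotonicity of the tails in $k$. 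Hence $K$ is compact, and a union bound gives $\sup_T\mathbb{P}_T(K^c)\le\epsilon/2+\sum_{n\ge1}\epsilon/2^{n+1}=\epsilon$, which is tightness.

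The step I expect to be the main obstacle is the bookkeeping in sufficiency: the exceptional probabilities must be made summable in $n$ (so that the countable intersection defining $K$ still has mass at least $1-\epsilon$ \emph{uniformly} in $T$) while the tolerances $1/n$ simultaneously decay to zero (so that the equismall-tails property, and hence compactness, genuinely holds). Conditions (i) and (ii) are tailored precisely to permit this: (i) controls the finite-dimensional marginals and secures boundedness, while (ii), with its $\lim_{k}\sup_T$ ordering, is exactly the uniform-in-$T$ equismall-tails requirement. I would close by remarking that everything above is the specialization of \citep[][Theorem 3]{Suquet1996} to the Schauder (orthonormal) basis $(\chi_{l\lpr})$ of $S_2(H)$, the tails $R_k$ playing the role of Suquet's tail projections, so that the substantive analytic content has already been established there.
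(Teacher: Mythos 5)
Your proof is correct, but it takes a genuinely different route from the paper, which offers no self-contained argument at all: the lemma is stated there as a particular case of \citep[][Theorem 3]{Suquet1996}, who proves tightness criteria in general Schauder decomposable Banach spaces, with the partial-sum/tail projections of the Schauder decomposition playing the role of your $R_k$. You instead specialize from the outset to the separable Hilbert space $S_2(H)\cong H\otimes H$ and run the classical Prokhorov-type argument directly: relative compactness equals boundedness plus equismall tails with respect to the orthonormal coordinates, necessity follows by pushing the boundedness and uniform tail decay of a compact set through its complement, and sufficiency by the summable-exceptional-set construction with tolerances $1/n$ and probability budgets $\epsilon 2^{-(n+1)}$. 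The bookkeeping you identify as the crux is handled correctly: each defining set of $K$ is a closed sublevel set of a continuous function; the block $\{l,\lpr<k_1\}$ and the tail $\{l+\lpr>k_1\}$ jointly cover all index pairs (if $l\ge k_1$ or $\lpr\ge k_1$ then $l+\lpr\ge k_1+1$ since indices start at $1$), giving $\snorm{x}_2^2\le h+1$ on $K$; monotonicity of $k\mapsto\snorm{R_kx}_2^2$ upgrades the countably many constraints $\snorm{R_{k_n}x}_2^2\le 1/n$ to genuine equismall tails; and the union bound closes at exactly $\epsilon$, uniformly in $T$. As for what each approach buys: the paper's citation is shorter and applies to Banach spaces lacking an orthonormal structure, while your proof is elementary and self-contained, and it makes transparent why the paper's downstream verification strategy --- finite-dimensional control for condition (i) via the fdd convergence in \autoref{thm:fddsmain}, and Markov-plus-second-moment tail control for condition (ii) --- is exactly what the two halves of the lemma demand.
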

 In order to verify the first condition, note that, since $k$ is fixed
\begin{align*}
\lim_{h \to \infty}& \sup_{T} \mathbb{P}\Big(\sum_{l,\lpr< k}|\inprod{{\xi}^{\lambda}_{T,m}}{\chi_{l \lpr}}|^2 > h \Big)
\le \sum_{l, \lpr< k} \lim_{h \to \infty} \sup_{T} \mathbb{P}\Big(|\inprod{{\xi}^{\lambda}_{T,m}}{\chi_{l \lpr}}|^2 > h\Big),
\end{align*}
and hence the first condition is implied by
\[ \forall l, \lpr \ge 1: \quad \lim_{h \to \infty} \sup_{T}  \mathbb{P}\Big(|\inprod{{\xi}^{\lambda}_{T,m}}{\chi_{l \lpr}}|^2 > h\Big)=0, \tageq \label{eq:first}\]
for which we moreover have
\[
  \mathbb{P}\Big(|\inprod{{\xi}^{\lambda}_{T,m}}{\chi_{l \lpr}}|^2 > h\Big) 
 \le  \mathbb{P}\Big( \Re(\inprod{{\xi}^{\lambda}_{T,m}}{\chi_{l \lpr}})^2  > h/2\Big)+\mathbb{P}\Big( \Im(\inprod{{\xi}^{\lambda}_{T,m}}{\chi_{l \lpr}})^2  > h/2\Big).
 \]
Since the real and imaginary part of the random variables $\inprod{{\xi}^{\lambda}_{T,m}}{\chi_{l \lpr}}$ converge to real-valued random variables by \autoref{thm:fddsmain}, the corresponding sequence of probability measures is tight on $(\mathbb{R},\mathcal{B})$. \eqref{eq:first} therefore follows from  the continuous mapping theorem. In order to verify the second condition of \autoref{lem:tight}, note that by Markov's inequality it suffices to prove that 
\[
\lim_{k \to \infty}\sup_T \sum_{\substack{l, \lpr: l+\lpr \ge k}} \E |{\xi}^{\lambda}_{m,T}(\psi_{l \lpr})|^2 = 0. \tageq \label{eq:targettail}
\]

Firstly, observe that $\E |{\xi}^{\lambda}_{m,T}(\psi_{l \lpr})|^2 \ge 0$ and $\inprod{\var({\xi}^{\lambda}_{m})(\psi_{l \lpr})}{(\psi_{l \lpr})}\ge0$. Note then from \eqref{eq:Gamfdds} that for any $\chi_{l \lpr} \in H \otimes H$
\[\lim_{T\to \infty} \E|{\xi}^{\lambda}_{m,T}(\chi_{l \lpr})|^2 = \Gamma^{\lambda} _m(\chi_{l \lpr})=\var({\xi}^{\lambda}_{m}(\chi_{l \lpr}))< \infty. \tageq \label{eq:pointwise}\]
Together with Parseval's identity the monotone convergence and by definition of the (transpose) Kronecker tensor product, \autoref{thm:fddsmain} implies
\begin{align*}
\limsup_{T\to \infty}\E\big[\snorm{{\xi}^{\lambda}_{T,m}}_2^2\big]
&\le \sum_{l,l^\prime=1}^\infty \lim_{T\to \infty} \E\big[\big|{\xi}^{\lambda}_{T,m}(\psi_{ll^\prime})\big|^2\big]
\\&= \sum_{l,l^\prime=1}^\infty { 4\pi^2}\Big( \F^{(\lambda)}_m(\psi_{ll}) \overline{\F^{(\lambda)}_m(\psi_{\lpr \lpr})} +\mathrm{1}_{\{0,\pi\}}\big(\F^{(\lambda)}_m(\psi_{ll^\prime})  \F^{(\lambda)}_m(\psi_{ll^\prime})\big) \Big)
\\&  =  { 4\pi^2} \Tr\Big(\F^{(\lambda)}_m \widetilde{\otimes} \F^{(\lambda)}_m)\Big) +\Tr\Big(\mathrm{1}_{\{0,\pi\}}\big( \F^{(\lambda)}_m\widetilde{\otimes}_{\top} {\F^{(\lambda)}_m} \big) \Big)= \E\snorm{{\xi}^{\lambda}_{m}}_2^2. \tageq \label{eq:sumwise}
\end{align*}
From Proposition \autoref{prop:propertiesDM}(iii.) we find immediately that
\[\E\snorm{{\xi}^{\lambda}_{m}}_2^2=\Tr(\var({\xi}^{\lambda}_{m})) < \infty.
\]
Consequently, we can choose an $\epsilon>0$ such that for all $k \ge k_0$
\[
|\Tr(\var({\xi}^{\lambda}_{m})) -  \sum_{l + \lpr \le k_0} \inprod{ (\var({\xi}^{\lambda}_{m}))(\psi_{l \lpr})}{(\psi_{l \lpr})}| < \epsilon.
\]
From the pointwise convergence \eqref{eq:pointwise} and from the sequence convergence in \eqref{eq:sumwise}, we obtain
\begin{align*}
\lim_{T \to \infty} \sum_{l,\lpr: l+\lpr \ge k_0}^{\infty} \E|{\xi}^{\lambda}_{m,T}(\psi_{l \lpr})|^2&= 
\lim_{T \to \infty}\Big( \sum_{l,\lpr =1}^{\infty} \E|{\xi}^{\lambda}_{m,T}(\psi_{l \lpr})|^2 -  \sum_{l + \lpr < k_0} \E|{\xi}^{\lambda}_{m,T}(\psi_{l \lpr})|^2\Big)
\\& \le \Tr(\var({\xi}^{\lambda}_{m})) -  \sum_{l + \lpr < k_0} \inprod{ (\var({\xi}^{\lambda}_{m}))(\psi_{l \lpr})}{(\psi_{l \lpr})}.
\end{align*}
In other words, there must exist a $T_0$ such that for all $T \ge T_0$ and $k \ge k_0$
 \[
|\E\snorm{{\xi}^{\lambda}_{m,T}}^2_2 -  \sum_{l + \lpr < k} \E|{\xi}^{\lambda}_{m,T}(\psi_{l \lpr})|^2| < \epsilon.
\]
Moreover, we can choose a $\tilde{k} \ge k_0$ such that for all $1\le  T <T_0$,
 \[
\E\snorm{{\xi}^{\lambda}_{m,T}}^2_2 -  \sum_{l + \lpr < \tilde{k}} \E|{\xi}^{\lambda}_{m,T}(\psi_{l \lpr})|_2^2 < \epsilon.
\]
\eqref{eq:targettail} now follows by taking $\max(k,\tilde{k})$. Therefore, we have established both conditions of \autoref{lem:cremka}, and thus ${\xi}^{\lambda}_{T,m} \Rightarrow_T {\xi}^{\lambda}_{m}$. Next, we will show that ${\xi}^{\lambda}_{m} \Rightarrow_m \breve{\mathcal{Q}}^{\lambda}$ where $\breve{\mathcal{Q}}^{\lambda}$ denotes the limiting process given in \autoref{thm:funcdist}. We again verify the conditions of  \autoref{lem:cremka}. From \autoref{thm:fddsmain} and Proposition \autoref{prop:propertiesDM}(iii.), we find
\begin{align*}
\lim_{m \to \infty}\E\snorm{\xi^{\lambda}_m}_2^2
&=
\lim_{m \to \infty} { 4\pi^2} \Tr\Big(\F^{(\lambda)}_m \widetilde{\otimes} \F^{(\lambda)}_m +\mathrm{1}_{\{0,\pi\}}\big( \F^{(\lambda)}_m\widetilde{\otimes}_{\top} {\F^{(\lambda)}_m} \big) \Big) 
\\&= { 4\pi^2} \Tr\Big(\F^{(\lambda)} \widetilde{\otimes} \F^{(\lambda)} +\mathrm{1}_{\{0,\pi\}}\big( \F^{(\lambda)}\widetilde{\otimes}_{\top} {\F^{(\lambda)}} \big) \Big) =\E\snorm{\breve{\mathcal{Q}}^{\lambda}}_2^2< \infty. \tageq \label{eq:varlimm}
\end{align*}
Recall then that \autoref{thm:fddsmain} shows that for fixed $m$ and for any $\chi \in H \otimes H$, $\xi^{\lambda}_m(\chi)$  is a zero mean complex-valued Gaussian random variable. Hence $\xi^{\lambda}_m(\chi) \Rightarrow_m \breve{\mathcal{Q}}^{\lambda}(\chi)$ if we can show that the covariance structure satisfies
\begin{align*}
\lim_{m \to \infty}\Gamma_m(\chi_{l\lpr}) &= \Gamma(\chi_{l\lpr}) \tageq \label{eq:varlimfddsm}
\\
\lim_{m \to \infty}\Sigma_m(\chi_{l\lpr}) &= \Sigma(\chi_{l\lpr}),
\end{align*}
where $\Gamma $ and $\Sigma$ are the covariance and pseudocovariance operator given in \autoref{thm:funcdist}. This however follows immediately from \eqref{eq:varlimm}.  Hence, $\xi^{\lambda}_m(\chi_{l\lpr}) \Rightarrow_m \breve{\mathcal{Q}}^{\lambda}(\chi_{l\lpr})$ showing the finite-dimensional distributions converge. Similar to \eqref{eq:first} this implies that \[ \forall l, \lpr \ge 1: \quad \lim_{h \to \infty} \sup_{m}  \mathbb{P}\Big(|\inprod{{\xi}^{\lambda}_{m}}{\chi_{l \lpr}}|^2 > h\Big)=0. \tageq \label{eq:first2}\] 
Hence, condition \autoref{lem:tight}(i) is satisfied. The tightness condition \autoref{lem:tight}(ii) is satisfied if 
\[
\lim_{k \to \infty}\sup_m \sum_{l+ \lpr \ge k} \E |{\xi}^{\lambda}_{m}(\psi_{l \lpr})|^2 = 0. \tageq \label{eq:targettailm}
\]
From the pointwise convergence \eqref{eq:varlimfddsm} and the convergence of \eqref{eq:varlimm} as $m \to \infty$, this now however follows similarly to the proof of \eqref{eq:targettailm}. Altogether, this establishes ${\xi}^{\lambda}_{T,m} \Rightarrow_T {\xi}^{\lambda}_m \Rightarrow_m \breve{\mathcal{Q}}^{\lambda}$.
Finally, it remains to show  ${\xi}^{\lambda}_{T} \Rightarrow_T \breve{\mathcal{Q}}^{\lambda}$, for which we make use of the next lemma.
\begin{lemma}\label{lem:bilaprox}
Under the conditions of \autoref{thm:funcdist}
\begin{align}
\lim_{m \to \infty} \limsup_{T \to \infty } \frac{1}{\snorm{\Phi_T}_{F}} \bignorm{\hat{\mathcal{Q}}^{\lambda}_{T}-\E\hat{\mathcal{Q}}^{\lambda}_T -\ldm{}-\ldm{\dagger} }_{S_2,2} = 0. \label{eq:bilcheck}
\end{align}
\end{lemma}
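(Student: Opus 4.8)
The plan is to start from the decomposition \eqref{eq:Qdecom}, which writes $\hat{\mathcal{Q}}^{\lambda}_T=\mathcal{U}_T+\mathcal{U}_T^{\dagger}+\mathcal{V}_T$ with strictly off-diagonal part $\mathcal{U}_T=\sum_{t=2}^{T}\sum_{s=1}^{t-1}\Phi^{(\lambda)}_{T,s,t}(X_s\otimes X_t)$ and diagonal part $\mathcal{V}_T=\sum_{t=1}^{T}\Phi_{T,t,t}(X_t\otimes X_t)$. Conditioning on $\G_{t-1}$ and using that $\dm{t}$ is a $\{\G_t\}$-martingale difference (Proposition \autoref{prop:propertiesDM}(i)) gives $\E\ldm{}=O_H$, so the quantity inside \eqref{eq:bilcheck} equals $\big[(\mathcal{U}_T-\E\mathcal{U}_T)-\ldm{}\big]+\big[(\mathcal{U}_T-\E\mathcal{U}_T)-\ldm{}\big]^{\dagger}+(\mathcal{V}_T-\E\mathcal{V}_T)$. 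Since $\snorm{\cdot}_2$ is invariant under adjoints, the triangle inequality reduces \eqref{eq:bilcheck} to two estimates: negligibility of the centered diagonal term, and the martingale approximation of the centered off-diagonal term.

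I would first dispose of the diagonal part, which carries no dependence on $m$. As $\Phi_{T,t,t}=(A_{T,0}\widetilde{\otimes}I_H)$ is the same operator for every $t$, the bound $\snorm{(A_{T,0}\widetilde{\otimes}I_H)C}_2\le\snorm{A_{T,0}}_\infty\snorm{C}_2$ yields $\|\mathcal{V}_T-\E\mathcal{V}_T\|_{S_2,2}\le\snorm{A_{T,0}}_\infty\,\bignorm{\sum_{t=1}^{T}\big(X_t\otimes X_t-\E(X_t\otimes X_t)\big)}_{S_2,2}$. The centered sequence $\{X_t\otimes X_t-\E(X_t\otimes X_t)\}$ is stationary in $\op^2_{S_2(H)}$ and, under \autoref{as:depstruc} with $p=4$, has summable physical-dependence coefficients, since each increment of $X_t\otimes X_t$ factors through increments of $X_t$ by the product rule and Cauchy--Schwarz; exactly as in the proof of Proposition \autoref{prop:sumCh} this gives a partial-sum bound of order $\sqrt{T}$, so $\|\mathcal{V}_T-\E\mathcal{V}_T\|_{S_2,2}^2=O(T\,\snorm{A_{T,0}}_\infty^2)$. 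By \autoref{as:phi}(ii) and the continuity of $A_{T,(\cdot)}$, $\snorm{A_{T,0}}_\infty^2=o(\varrho_T^2)$, and by \autoref{as:phi}(i), $T\varrho_T^2=O(\snorm{\Phi_T}^2_F)$, so the normalized diagonal contribution is $o(1)$ as $T\to\infty$, uniformly in $m$.

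The substance of the lemma is the off-diagonal approximation, which I would obtain by telescoping $(\mathcal{U}_T-\E\mathcal{U}_T)-\ldm{}$ through two intermediate stages. First, replacing each $X_t$ by its $m$-dependent truncation $X^{(m)}_t$ incurs an error whose $\op^2_{S_2(H)}$-norm, after dividing by $\snorm{\Phi_T}_F$ and taking $\limsup_{T}$, is bounded by a tail $\sum_{j>m}\nu_{\hi,4}(X_j)$ of the dependence coefficients; this tail vanishes as $m\to\infty$ by \autoref{as:depstruc}, which is precisely where the outer limit in \eqref{eq:bilcheck} is consumed. Second, inside the $m$-dependent world I would expand $X^{(m)}_t=\sum_{k}P_k(X^{(m)}_t)$ via \eqref{eq:ProjX} and reindex the double sum so that the components sharing a common innovation index assemble into the martingale increments $\dm{t}\otimes\big(\sum_{s=1}^{t-1}\phi^{(\lambda)}_{T,s,t}\dm{s}\big)$ that make up $\ldm{}$. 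This reindexing leaves two remainders: one governed by the modulus of continuity of the weights, which \autoref{as:phi}(iii) ($\sum_{t}\snorm{A_{T,t}-A_{T,t-1}}_\infty^2=o(\varrho_T^2)$) forces to be negligible, and one genuinely non-martingale cross term whose second moment reduces, after expansion, to $\sum_{j=1}^{T-1}\sum_{s=1}^{j-1}\snorm{\sum_{t=j+1}^{T}\phi_{T,s-t}\widetilde{\otimes}\phi_{T,j-t}}_\infty^2$, which \autoref{as:phi}(iv) requires to be $o(\snorm{\Phi_T}^4_F)$.

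The main obstacle is the $L^2$-control of this last non-martingale remainder. Unlike $\ldm{}$ itself, it is not a sum of orthogonal martingale increments, so its $\op^2_{S_2(H)}$-norm cannot be collapsed to a diagonal sum of squared weight norms; instead one must expand the second moment into all pairwise interactions and follow them through the fourth-order dependence structure of $\{X_t\}$. I expect the estimate to proceed via the Burkholder-type inequality for linear transforms of Hilbert-valued martingales (\autoref{lem:Burkh}) combined with the summability $\sum_{j}\|P_0(X_j)\|_{\hi,4}<\infty$ implied by \autoref{as:depstruc}, so that the only surviving combinatorial factor is exactly the weight functional of \autoref{as:phi}(iv). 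Carrying this out with bounds uniform in $T$ — so that $\limsup_{T}$ of the normalized remainder is $o(1)$ for each fixed $m$, while the residual $m$-dependence is absorbed into the truncation tail of the first stage — then yields \eqref{eq:bilcheck}.
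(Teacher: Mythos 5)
Your architecture matches the paper's up to the last step: the decomposition \eqref{eq:Qdecom}, the observation $\E\ldm{}=O_H$, the treatment of the diagonal term (the paper's Lemma \autoref{eq:nullpart} gives precisely your $O(\sqrt{T})$ partial-sum bound, combined with \autoref{as:phi}(ii) and (i)), and your first off-diagonal stage is the paper's $m$-dependence approximation (Lemma \autoref{lem:mdapprox}), which indeed consumes the outer limit in $m$. One small imprecision there: the error is not literally bounded by the tail $\sum_{j>m}\nu_{\hi,4}(X_j)$, since the coupling error accumulates over all $t$; the paper needs the quantity $\Upsilon_{4,m}=2\sum_{t}\min\big(\nu_{\hi,4}(X_t),\boldsymbol{\Delta}_{4,2,m+1}\big)$, which vanishes as $m\to\infty$ by dominated convergence. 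This is repairable and in the spirit of what you wrote.

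The genuine gap is in your second stage, and it is twofold. First, your attribution of \autoref{as:phi}(iv) is wrong: condition (iv) plays no role in this lemma. In the paper it enters only through Lemma \autoref{lem:Zt}, where the functional $\sum_{j}\sum_{s<j}\snorm{\sum_{t>j}\phi_{T,s-t}\widetilde{\otimes}\phi_{T,j-t}}_{\infty}^2$ arises from the projections $P_j$ of the \emph{conditional variance} process $\sum_t M^{(\lambda_1)}_{t}\otimes M^{(\lambda_2)}_{t}$ in the CLT for the finite-dimensional distributions (\autoref{thm:fddsmain}) --- a quadratic-variation computation, not an approximation error. Second, and more seriously, you never actually bound the remainder between $\ldd{}-\E\ldd{}$ and $\ldm{}$; you only state you ``expect'' Burkholder plus summability to suffice. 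The mechanism that makes this step work (Lemma \autoref{lem:marappr}) is an algebraic identity you do not have: setting $W_k=\sum_{u=0}^{m}\E[X^{(m)}_{u+k}|\G_k]e^{-\im u\lambda}$, one gets $X^{(m)}_k=W_k-e^{-\im\lambda}\E[W_{k+1}|\G_k]$ while $\dm{k}=W_k-\E[W_k|\G_{k-1}]$, so the difference $X^{(m)}_s-\dm{s}$ telescopes; summation by parts against $A_{T,\cdot}$, together with blocking into $4m$-spaced martingale difference arrays (the terms $M^{\star}_t$, $Y_t$, $Z_t$), yields the bound $m^2\sqrt{T}\big(\max_t\snorm{A_{T,t}}^2_{\infty}+m\sum_s\snorm{A_{T,s-t}-A_{T,s-t-1}}^2_{\infty}\big)^{1/2}$, i.e., only conditions (i)--(iii) of \autoref{as:phi} are used. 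Your ``reindex by common innovation index'' plan also leaves unaddressed how the phase $e^{-\im\lambda(s-t)}$ attached to observation times gets converted into phases attached to innovation lags --- exactly what the $W_k$ identity accomplishes; without it, the ``genuinely non-martingale cross term'' you posit is neither identified nor estimated, and the claim that its second moment reduces to the condition-(iv) functional is unsubstantiated.
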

The proof can be found in \autoref{sec:A_opAprox}. Since $S_2(H)$ is a complete metric space, let $F$ be a closed set of $S_2(H)$ and fix $\epsilon >0$. Then
 \begin{align*}
 \mathbb{P}({\xi}^{\lambda}_{T} \in F) & \le \mathbb{P}( \snorm{{\xi}^{\lambda}_{T,m}-{\xi}^{\lambda}_{T} }_2 \ge \epsilon)  + \mathbb{P}\big( {\xi}^{\lambda}_{T,m}\in \{x : \snorm{x- y}_2 \le \epsilon, y \in F\}\big) 
\end{align*}
and since by the weak convergence of ${\xi}^{\lambda}_{T,m} \Rightarrow_T {\xi}^{\lambda}_m \Rightarrow_m \breve{\mathcal{Q}}^{\lambda}$, we have 
\begin{align*}
\lim_{m \to \infty}  \limsup_{T \to \infty} \mathbb{P}\big( {\xi}^{\lambda}_{T,m}\in \{x : \snorm{x- y}_2 \le \epsilon, y \in F\}\big) \le  \mathbb{P}\big( \breve{\mathcal{Q}}^{\lambda}\in \{x : \snorm{x- y}_2 \le \epsilon, y \in F\}\big).
\end{align*}
Using then \autoref{lem:bilaprox}, Markov's inequality yields
 \begin{align*}
\limsup_{T \to \infty}\mathbb{P}({\xi}^{\lambda}_{T} \in F) \le \mathbb{P}\big( \breve{\mathcal{Q}}^{\lambda}\in \{x : \snorm{x- y}_2 \le \epsilon, y \in F\}\big),  
\end{align*}
so that taking $\epsilon \to 0$, completes the proof.
\end{proof}

\section{Estimation of the spectral density operator} \label{sec:sec4}

In this section, we focus on the application of the above theorem to estimate the spectral density operator
\begin{align*}
\F^{(\lambda)} =\frac{1}{2\pi}\sum_{h \in \znum} C_h e^{-\im \lambda h}.
\end{align*}
Proofs of the statements in this section are postponed to \autoref{sec:ApF}. It is well-known that under various conditions \citep[see e.g.,][]{{HorHalL15},pt13} an asymptotically unbiased estimator is given by the periodogram operator
\[\mathcal{I}^{\lambda}_T : = \mathcal{D}_{T}^{\lambda}\otimes \mathcal{D}_{T}^{\lambda} \]
where $ \mathcal{D}_{T}^{\lambda}$ are the fDFT of $X$ given in \autoref{sec:sec2}. Note that by construction, this operator is hermitian, non-negative definite and $\lambda \mapsto \mathcal{I}^{\lambda}_T$ is $2\pi$-periodic. From \eqref{eq:Iunb}, we can immediately conclude that, under the stated conditions, the periodogram operator is indeed an asymptotically unbiased estimator of $\F^{(\lambda)}$. It can however never be consistent because it is based upon one frequency observation. A consistent estimator of the spectral density operator can be obtained via smoothing the operator-valued function $\lambda \mapsto \mathcal{I}^{\lambda}_T$ over neighboring frequency ordinates, i.e., via convolving the periodogram operator with a window function $K$. 
 For example, it is very common to consider an estimator of the form
\begin{align}
\hat{\F}^{\omega}& =\frac{1}{b_T}\int^{\infty}_{-\infty} K\big(\frac{\omega-\lambda}{b_T}\big) \mathcal{D}_{T}^{\lambda}\otimes \mathcal{D}_{T}^{\lambda} d\lambda,  \label{eq:smoFQ}
\end{align}
where $K: \rnum \to \mathbb{R}_{+}$ is assumed to be an even, non-negative weight function that is integrable. Under \autoref{as:depstruc} with $p=4$, it is immediate from an application of the Cauchy-Schwarz inequality and  \autoref{lem:lineq}(i) that $\sup_{\lambda}\norm{\mathcal{I}^{\lambda}_T}_{S_2,2}=O(1)$ uniformly in $T$. By Holder's inequality, \eqref{eq:smoFQ} therefore exists as an element of $\norm{\cdot}_{S_2,2}$. In order to exploit the results from the previous section, we however require the estimator can be formulated in terms of a quadratic form. As remarked in the introduction, we consider
\[\hat{\F}^{\omega}= \frac{1}{2\pi T}\sum_{s,t=1}^{T} \big( (X_s-\mu) \otimes (X_t-\mu) \big)  w(b_T(t-s)) e^{\im \omega (t-s)}.  \tageq \label{eq:relFQ}\]
Note that $\hat{\F}^{\omega} =\frac{1}{2\pi  T} \hat{\mathcal{Q}}^{\omega}_{T}$ with ${\Phi}_{T,t,s} = \phi_{T,(t-s)}^{\omega} I_{H \otimes H}=w(b_T(t-s)) e^{\im \omega (t-s)}I_{H \otimes H}$ thus yields the representation in terms of the quadratic form introduced in the previous section. Provided $w(\cdot)$ and $K(\cdot)$ form Fourier pairs, there is a clear connection between \eqref{eq:smoFQ} and \eqref{eq:relFQ}. Namely, a change of variables gives
\begin{align*}
\hat{\F}^{\omega}& =\frac{1}{b_T}\int^{\infty}_{-\infty} K\big(\frac{\omega-\lambda}{b_T}\big) \mathcal{D}_{T}^{\lambda}\otimes \mathcal{D}_{T}^{\lambda} d\lambda  = \int^{\infty}_{-\infty} K(x) \mathcal{I}_{T}^{\omega + x b_T}d x
\\& = \int^{\infty}_{-\infty} K(x) \frac{1}{2\pi T}\sum_{s,t=1}^{T} e^{-\im (\omega +x b_T) (s-t)} (X_s \otimes X_t)d x
\\& =\frac{1}{2\pi T}\sum_{s,t=1}^{T} (X_s \otimes X_t) e^{-\im \omega(s-t)}\int^{\infty}_{-\infty} K(x) e^{\im x b_T (t-s)}d x
\\& =\frac{1}{2\pi T}\sum_{s,t=1}^{T} (X_s \otimes X_t)  w(b_T(t-s)) e^{\im \omega (t-s)}, 
\end{align*}
where the equality is with respect to $\norm{\cdot}_{S_2,2}$. 
In order to verify consistency and asymptotic normality, we require the following assumptions on the weight function $w(\cdot)$ in \eqref{eq:relFQ}.
\begin{assumption}\label{as:Weights}
Let $w$ be an even, bounded function on $\mathbb{R}$ with $\lim_{x \to 0} w(x) =1$ that is continuous except at a finite number of points. 
 Furthermore, suppose that $\lim_{b \to 0}b \sum_{h \in \znum} w^2(b h) = \kappa$ where $\kappa:=\int^{\infty}_{-\infty} w^2(x) dx < \infty$ such that $\sup_{0 \le b \le 1} b \sum_{|h| \ge M/b} w^2(b h) \to 0$ as $M \to \infty$.
\end{assumption}
Observe that these are rather mild conditions for window functions and includes a wide range of common choices \citep[see e.g.][]{BrockDav91}. Under these conditions we can obtain consistency in mean square of the spectral density operator.
\begin{thm}\label{thm:consF}
Suppose \autoref{as:depstruc} with $p\ge 2$ and \autoref{as:Weights} are satisfied. Then, 
\begin{enumerate}[(i)]
\item $\sup_{\lambda \in [0,\pi]}\norm{\hat{\F}_T^{\lambda} -{\F}^{\lambda}}^2_{S_2,p} \to 0$ if  $b_T \to 0$ as $T \to \infty$ such that $b_T T \to \infty$.
\item If, in addition, $\sum_{h \in \znum} h \|P_0(X_h)\|_{\hi,2} < \infty$ and $w(x) - 1=O(x)$ as $x\to 0$, then
\begin{align*}
\norm{\hat{\F}_T^{\lambda} -{\F}^{\lambda}}^2_{S_2,p} = O(\frac{1}{b_T T}+b^2_T)
\end{align*}
uniformly in $\lambda \in [0,\pi]$.
\end{enumerate} 
\end{thm}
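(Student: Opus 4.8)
The plan is to split the estimation error into a deterministic bias and a stochastic fluctuation, control each in the relevant norm, and exploit the fact that every bound we obtain is free of $\lambda$, so that passing to $\sup_{\lambda\in[0,\pi]}$ costs nothing. Writing $\phi^\lambda_{T,h}=w(b_Th)e^{\im\lambda h}$ and using that the bias term is deterministic, the triangle inequality in $\op^p_{S_2(H)}$ gives
\begin{align*}
\norm{\hat{\F}^\lambda_T-\F^\lambda}_{S_2,p}^2 \le 2\,\snorm{\E\hat{\F}^\lambda_T-\F^\lambda}_2^2 + 2\,\norm{\hat{\F}^\lambda_T-\E\hat{\F}^\lambda_T}_{S_2,p}^2 .
\end{align*}
The two summands are treated separately.

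First I would handle the bias. Reindexing the double sum by the lag $h=t-s$, using stationarity so that $\E(X_s\otimes X_t)=C_{s-t}$, counting the $T-|h|$ pairs with a given lag, and using that $w$ is even yields
\begin{align*}
\E\hat{\F}^\lambda_T-\F^\lambda=\frac{1}{2\pi}\sum_{h\in\znum}\Big[\big(1-\tfrac{|h|}{T}\big)w(b_Th)\mathrm{1}_{\{|h|<T\}}-1\Big]C_h e^{-\im\lambda h},
\end{align*}
so that, bounding $|e^{-\im\lambda h}|=1$,
\begin{align*}
\snorm{\E\hat{\F}^\lambda_T-\F^\lambda}_2 \le \frac{1}{2\pi}\sum_{h\in\znum}\Big|\big(1-\tfrac{|h|}{T}\big)w(b_Th)\mathrm{1}_{\{|h|<T\}}-1\Big|\,\snorm{C_h}_2 .
\end{align*}
By Proposition~\autoref{prop:sumCh} we have $\sum_h\snorm{C_h}_2<\infty$; the bracketed factor is uniformly bounded (since $w$ is bounded) and, as $b_T\to0$ forces $w(b_Th)\to1$ and $|h|/T\to0$ for each fixed $h$, it converges to $0$ pointwise. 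Dominated convergence then gives $\snorm{\E\hat{\F}^\lambda_T-\F^\lambda}_2\to0$ uniformly in $\lambda$, which is the bias half of (i). For (ii) I would decompose $(1-|h|/T)w(b_Th)-1=(w(b_Th)-1)-w(b_Th)|h|/T$ and estimate the pieces using $|w(x)-1|=O(x)$ together with the reinforced summability $\sum_h|h|\snorm{C_h}_2<\infty$, which follows from $\sum_h h\|P_0(X_h)\|_{\hi,2}<\infty$ via the same projection bound used in Proposition~\autoref{prop:sumCh}; the first piece contributes $O(b_T)$, while the second piece and the tail $\sum_{|h|\ge T}\snorm{C_h}_2$ contribute $O(1/T)$, giving a squared bias of order $O(b_T^2)$.

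For the stochastic part I would invoke the martingale apparatus built for the main theorem. The centered object $\hat{\F}^\lambda_T-\E\hat{\F}^\lambda_T$ equals $(2\pi T)^{-1}$ times the centered quadratic form $\hat{\mathcal Q}^\lambda_T-\E\hat{\mathcal Q}^\lambda_T$ with $\Phi_{T,s,t}=\phi^\lambda_{T,t-s}I_{H\otimes H}$. Using the martingale-difference approximation of Proposition~\autoref{prop:Msq} and the Burkholder-type moment inequality for Hilbert-valued martingale transforms (\autoref{lem:Burkh}), I would bound the $p$-th moment by a constant multiple of $\big((2\pi T)^{-2}\snorm{\Phi_T}_F^2\big)^{p/2}$, whence
\begin{align*}
\norm{\hat{\F}^\lambda_T-\E\hat{\F}^\lambda_T}_{S_2,p}^2 = O\!\Big(\frac{\snorm{\Phi_T}_F^2}{T^2}\Big).
\end{align*}
It then remains to evaluate $\snorm{\Phi_T}_F^2=\sum_{s,t=1}^{T}w^2(b_T(t-s))=\sum_{|h|<T}(T-|h|)w^2(b_Th)\le T\sum_{h\in\znum}w^2(b_Th)$, and \autoref{as:Weights} gives $b_T\sum_h w^2(b_Th)\to\kappa<\infty$, so that $\snorm{\Phi_T}_F^2=O(T/b_T)$ and the stochastic part is $O\big(1/(b_TT)\big)$, uniformly in $\lambda$ since $\snorm{\Phi_T}_F$ is $\lambda$-free. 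Combining the two estimates proves (i) (bias $\to0$ and variance $\to0$ whenever $b_TT\to\infty$) and (ii) ($O(b_T^2)+O(1/(b_TT))$), and taking $\sup_{\lambda\in[0,\pi]}$ is harmless as no bound depends on $\lambda$.

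The principal obstacle is the $p$-th moment control of the stochastic quadratic form. The second moment can be read directly from the $\snorm{\cdot}_2$ inner-product structure and $\sum_h\snorm{C_h}_2<\infty$, but higher moments require the full approximation machinery: one replaces $X_s\otimes X_t$ by the $m$-dependent surrogates $\dm{s}\otimes\dm{t}$, applies the Burkholder/Rosenthal-type inequality of \autoref{lem:Burkh} to the resulting transform of Hilbert-valued martingale differences, and verifies that the fourth-order and higher interaction terms stay controlled by $\sum_j\nu_{\hi,p}(X_j)<\infty$ uniformly in $T$ and $\lambda$. Checking that these inequalities reproduce exactly the factor $\snorm{\Phi_T}_F^2/T^2$, with no loss from the operator-valued weights, is the technical heart of the argument; by contrast the bias analysis and the uniformity in $\lambda$ are routine.
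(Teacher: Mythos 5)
Your bias analysis is essentially the paper's own argument: the same reindexing to $\frac{1}{2\pi}\sum_{|h|<T}w(b_Th)(1-\tfrac{|h|}{T})C_h e^{-\im\lambda h}$, the same three-way split into a $(w(b_Th)-1)$-term, a tail term and a $|h|/T$-term, with dominated convergence/Kronecker for part (i) and the reinforced summability $\sum_h|h|\snorm{C_h}_2<\infty$ for part (ii). One small repair is needed in (ii): since $|w(x)-1|=O(x)$ is only guaranteed as $x\to 0$, you cannot apply it over the whole range $|h|<T$; the paper therefore splits at $|h|=1/b_T$, using $|w(b_Th)-1|\le Cb_T|h|$ on $|h|<1/b_T$ and boundedness of $w$ plus $\sum_{|h|\ge 1/b_T}\snorm{C_h}_2\le b_T\sum_h|h|\snorm{C_h}_2=O(b_T)$ on the rest. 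Your split at $|h|=T$ does not give this without that intermediate step, though the fix is routine.

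The genuine gap is in the stochastic part. The paper is explicit that \autoref{thm:consF} \emph{does not} rely on the martingale approximation: its proof bounds $\hat{\F}^\lambda_T-\E\hat{\F}^\lambda_T$ directly, decomposing the off-diagonal part as $\sum_j P_j(\cdot)$ with the projection operators of the original process, using the contraction property with the coupled variables $X_{t,\{j\}}$ and \autoref{lem:lineq} (Burkholder applied to \emph{linear} transforms of $X$ itself), plus \autoref{eq:nullpart} for the diagonal term. This yields the $O((b_TT)^{-1})$ rate in $\norm{\cdot}_{S_2,p}$ for general $p$ under the stated dependence condition. Your route through $\dm{t}\otimes\dm{s}$, \autoref{prop:Msq} and \autoref{lem:bilaprox} has two problems relative to the theorem as stated. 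First, it over-assumes: the surrogate process $\ldm{}$ only lives in $\op^2_{S_2(H)}$ when $X\in\op^4_H$ (Assumption \ref{as:depstruc} with $p=4$), so the argument is unavailable at the theorem's minimal moment level and in any case only produces the $\norm{\cdot}_{S_2,2}$ bound, not the claimed $\norm{\cdot}_{S_2,p}$ bound. Second, the key step you assert --- ``bound the $p$-th moment by a constant multiple of $\big((2\pi T)^{-2}\snorm{\Phi_T}_F^2\big)^{p/2}$ via \autoref{lem:Burkh}'' --- does not follow from that lemma: \autoref{lem:Burkh} controls $p$-th moments of linear transforms $\sum_k A_k(D_k)$ of an $H$-valued martingale difference sequence, whereas here the summands $\dm{t}\otimes\big(\sum_{s<t}\phi^{(\lambda)}_{T,s,t}\dm{s}\big)$ are a quadratic martingale transform; for $p>2$ one would need a Rosenthal-type inequality for such quadratic forms (with $2p$ moments of $X$), which is nowhere established in the paper. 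Finally, note that $\ldm{}$ contains only the off-diagonal terms, so the diagonal sum $\sum_t\Phi_{T,t,t}(X_t\otimes X_t)$ must be centered and bounded separately (the paper does this via \autoref{eq:nullpart}, giving an $O(T^{-1/2}\varrho_T/\snorm{\Phi_T}_F)$-type negligible contribution); your write-up passes over it. For $p=2$ and under the $p=4$ moment assumption your plan can be completed --- the iterated-limit form of \autoref{lem:bilaprox} does deliver the $O(1/(b_TT))$ rate after choosing $m$ appropriately, as you implicitly use --- but it is strictly weaker in scope and strictly heavier in hypotheses than the paper's direct ergodicity-based proof.
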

Note that \autoref{thm:consF} does not rely on a martingale approximation to exist but relies on the ergodicity properties of the underlying process. Without loss of generality, we can restrict to the interval $[0,\pi]$ since the mappings $\lambda \mapsto \hat{\F}_T^{\lambda}$ and $\lambda \mapsto {\F}_T^{\lambda}$ are even and $2\pi$-periodic. Under \autoref{as:depstruc} with $p=4$ and \autoref{as:Weights} we in fact obtain that $\sup_{\lambda \in [0,\pi]} \E \snorm{\hat{\F}_T^{\lambda} -\E{\F}^{\lambda}}^2_2 =O(\frac{1}{b_T T})$. It is however often of importance to specify the rate of consistency and hence to control the order of the bias in norm. As given in the second part of the statement, this requires mild additional conditions on the smoothness of the process as well as a smoothness condition of the weight function around $0$.

\begin{Remark}[\textbf{If the function $\mu$ is unknown}]\label{rem:mean}
\textup{In case the mean function $\mu$ is unknown, we can instead consider the estimator
\begin{align}
\hat{\ddot{\F}}^{(\lambda)}& =\frac{1}{2\pi T}\sum_{s,t=1}^{T}\big( (X_s-\hat{\mu}) \otimes (X_t-\hat{\mu}) \big)  w(b_T(t-s)) e^{\im \lambda (t-s)}, 
\end{align}
where $\hat{\mu} = \frac{1}{T}\sum_{j=1}^{T}X_T$ denotes the sample mean function and which defines a random element of $H$. We obtain the following error bound with the estimator in \eqref{eq:relFQ}, which shows the results in this section are not affected by centering the data using the sample mean.
\begin{lemma}\label{lem:sampmean}
Suppose \autoref{as:depstruc} with $p=4$ is satisfied and \autoref{as:Weights} holds. Then
\[ \sup_{\lambda \in [0,\pi]}\E\snorm{\hat{\ddot{\F}}^{(\lambda)}- \hat{{\F}}^{(\lambda)}}^2_{2} =O( (b_T T)^{-2}).\]
More generally, if $X \in \op^{2p}_H$, then $\norm{\hat{\ddot{\F}}^{(\lambda)}- \hat{{\F}}^{(\lambda)}}_{S_2,p} =O( (b_T T)^{-1}) $, $p \ge 1$. 
\end{lemma}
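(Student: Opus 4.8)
The plan is to reduce to the centered case, expand the difference into bilinear error terms involving the sample mean, and bound each term through the isometry $\snorm{f\otimes g}_2=\|f\|_H\|g\|_H$ together with moment bounds for $\hat\mu$ and for weighted sums of the process. Taking the data centered (so $\mu=0$ and $\hat\mu=\tfrac1T\sum_{t=1}^TX_t$) and writing $\phi^{(\lambda)}_{T,t-s}=w(b_T(t-s))e^{\im\lambda(t-s)}$, expanding the tensor product $(X_s-\hat\mu)\otimes(X_t-\hat\mu)$ in \eqref{eq:relFQ} yields
\[
\hat{\ddot{\F}}^{(\lambda)}-\hat{\F}^{(\lambda)}=-A^{(\lambda)}_T-B^{(\lambda)}_T+C^{(\lambda)}_T,
\]
where, using bilinearity, $A^{(\lambda)}_T=\tfrac{1}{2\pi T}\big(\sum_{s=1}^T g^{(\lambda)}_sX_s\big)\otimes\hat\mu$ with $g^{(\lambda)}_s=\sum_{t=1}^T\phi^{(\lambda)}_{T,t-s}$, the term $B^{(\lambda)}_T$ is its mirror image, and $C^{(\lambda)}_T=\tfrac{1}{2\pi T}\big(\sum_{s,t=1}^T\phi^{(\lambda)}_{T,t-s}\big)\,\hat\mu\otimes\hat\mu$. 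Equivalently one may use the identity $\ddot{\mathcal D}^{(\lambda)}_T=\mathcal D^{(\lambda)}_T-\tfrac{D_T(\lambda)}{T}\mathcal D^{(0)}_T$ for the mean-corrected fDFT, with $D_T(\lambda)=\sum_{t=1}^Te^{-\im\lambda t}$, and expand $\ddot{\mathcal D}^{(\lambda)}_T\otimes\ddot{\mathcal D}^{(\lambda)}_T-\mathcal D^{(\lambda)}_T\otimes\mathcal D^{(\lambda)}_T$ before smoothing; the scalar factor $D_T(\lambda)/T$ then carries the whole error.

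The two moment inputs I would establish first are, on one hand, that $\hat\mu=\sqrt{2\pi/T}\,\mathcal D^{(0)}_T$ combined with the uniform bound $\sup_\lambda\|\mathcal D^{(\lambda)}_T\|_{\hi,2p}=O(1)$ (which follows from \autoref{lem:lineq} and the summability in \autoref{as:depstruc}) gives $\|\hat\mu\|_{\hi,2p}=O(T^{-1/2})$ whenever $X\in\op^{2p}_H$; and, on the other hand, a Rosenthal/Burkholder-type bound for the dependent weighted sum, $\big\|\sum_{s=1}^Tg^{(\lambda)}_sX_s\big\|_{\hi,2p}\le C\big(\sum_{s=1}^T|g^{(\lambda)}_s|^2\big)^{1/2}$, with $C$ governed by $\sum_{j\ge0}\nu_{\hi,2p}(X_j)$.

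With these inputs, Minkowski's inequality reduces the claim to bounding $\norm{A^{(\lambda)}_T}_{S_2,p}$, $\norm{B^{(\lambda)}_T}_{S_2,p}$ and $\norm{C^{(\lambda)}_T}_{S_2,p}$ separately. Using the isometry and the Cauchy--Schwarz inequality in $L^p(\Omega)$,
\[
\norm{A^{(\lambda)}_T}_{S_2,p}\le\tfrac{1}{2\pi T}\Big\|\sum_{s=1}^Tg^{(\lambda)}_sX_s\Big\|_{\hi,2p}\|\hat\mu\|_{\hi,2p}=O\Big(\tfrac{(\sum_s|g^{(\lambda)}_s|^2)^{1/2}}{T^{3/2}}\Big),\qquad \norm{C^{(\lambda)}_T}_{S_2,p}=\tfrac{\big|\sum_{s,t}\phi^{(\lambda)}_{T,t-s}\big|}{2\pi T}\,\|\hat\mu\|_{\hi,2p}^2.
\]
Everything then hinges on two scalar weighted-sum estimates, uniform in $\lambda\in[0,\pi]$: $\big|\sum_{s,t=1}^T\phi^{(\lambda)}_{T,t-s}\big|=O(T/b_T)$ and $\sum_{s=1}^T|g^{(\lambda)}_s|^2=O(T/b_T^2)$, both of which I would derive from \autoref{as:Weights} (the first from $\sum_{|h|<T}|w(b_Th)|=O(1/b_T)$, the second from $|g^{(\lambda)}_s|=O(1/b_T)$ uniformly). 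Substituting gives $\norm{A^{(\lambda)}_T}_{S_2,p},\norm{B^{(\lambda)}_T}_{S_2,p}=O((b_TT)^{-1})$ and $\norm{C^{(\lambda)}_T}_{S_2,p}=O\big((T/b_T)\cdot T^{-1}\cdot T^{-1}\big)=O((b_TT)^{-1})$, which proves the general $\op^{2p}_H$ statement; taking $p=2$ and squaring yields $\sup_{\lambda}\E\snorm{\hat{\ddot{\F}}^{(\lambda)}-\hat{\F}^{(\lambda)}}_2^2=O((b_TT)^{-2})$.

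The main obstacle is the uniform-in-$\lambda$ control of $g^{(\lambda)}_s$ and $\sum_{s,t}\phi^{(\lambda)}_{T,t-s}$ near $\lambda=0$, which in DFT form is the Dirichlet-kernel integral $\int K(x)\,|D_T(\lambda+xb_T)|/T\,dx$: here $|D_T|$ is of order $T$ on a window of width $1/T$, so one must carefully separate the two scales $1/b_T$ (smoothing) and $1/T$ (Dirichlet/Fej\'er width) and exploit the oscillation $e^{\im\lambda h}$, rather than the crude bound $|D_T(\lambda)|/T\le1$ which is lossy. I would also need to handle the boundary effects (the partial sums $g^{(\lambda)}_s$ are not stationary for $s$ near $1$ or $T$) when passing from the two-sided series $\sum_{h\in\znum}w(b_Th)e^{\im\lambda h}$ to the finite sums, and to verify the weighted-sum moment bound at level $2p$, which is precisely where the physical-dependence summability of \autoref{as:depstruc} enters.
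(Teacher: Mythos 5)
Your proof is correct and is essentially the paper's own argument: expand the tensor product bilinearly around the mean, bound the cross terms by Cauchy--Schwarz using $\|\hat{\mu}-\mu\|_{\hi,2p}=O(T^{-1/2})$ and the weighted-sum moment bound of \autoref{lem:lineq}(i), and bound the $\hat{\mu}\otimes\hat{\mu}$ term by the total weight mass $\sum_{s,t}|w(b_T(t-s))|=O(T/b_T)$; the paper merely groups the cross term per lag $h$ as $(\mu-\hat{\mu})\otimes\sum_{t\le T-h}(X_t-\mu)$ rather than as your weighted sum $\sum_s g_s^{(\lambda)}X_s$, which is an immaterial difference. One correction to your closing assessment: the ``main obstacle'' you flag is not there. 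The crude bounds $|g_s^{(\lambda)}|\le\sum_{|h|\le T}|w(b_Th)|=O(b_T^{-1})$ and $\big|\sum_{s,t}\phi^{(\lambda)}_{T,t-s}\big|=O(T/b_T)$ are automatically uniform in $\lambda$ (just $|e^{\im\lambda h}|=1$) and, as your own arithmetic shows, already yield the claimed rate $O((b_TT)^{-1})$ exactly; no Dirichlet-kernel two-scale analysis or exploitation of the oscillation $e^{\im\lambda h}$ is needed, and the paper uses precisely these crude bounds. (Both you and the paper do implicitly use $\sum_{|h|\le T}|w(b_Th)|=O(b_T^{-1})$, an $L^1$-type property of $w$ slightly beyond the literal $L^2$ statement in \autoref{as:Weights}, but satisfied by the compactly supported or integrable windows intended there.)
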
}
\end{Remark}

\blu{A consequence of \autoref{thm:consF} is that dynamic functional principal component analysis and hence an optimal dimension reduction of functional time series can be derived under weaker conditions than currently available \citep{pt13b,HorHalL15}. 
A central role for such techniques is played by the empirical eigenelements of an estimator $\hat{\F}_T^{\lambda}$ of $\F^{\lambda}$.  
More specifically, since $\F^{\lambda}$ and $\hat{\F}_T^{\lambda}$ are compact self-adjoint non-negative definite operators, they admit representations of the form
\[
{\F}_T^{\lambda} = \sum_{j=1}^{\infty}{\beta}_j^{\lambda}\, \Pi^{\lambda}_j
 \quad \text{ and } \quad \hat{\F}_T^{\lambda} = \sum_{j=1}^{\infty} \hat{\beta}_j^{\lambda}\, \hat{\Pi}^{\lambda}_j, \]
where the sequence of eigenvalues $\{{\beta}_j^{\lambda} \}_{j \ge 1}$ is a non-increasing sequence of positive real numbers tending to zero and where  $\Pi^{\lambda}_j ={\phi}_j^{\lambda}\otimes {\phi}_j^{\lambda}$ and $\hat{\Pi}^{\lambda}_j = \hat{\phi}_j^{\lambda}\otimes \hat{\phi}_j^{\lambda}$ are the $j$-th eigenprojectors of ${\F}^{\lambda}$ and $\hat{\F}_T^{\lambda}$, respectively.  
Using the result of \autoref{thm:consF}{(i)}, we obtain consistency of the empirical eigenprojectors and eigenvalues for their population counterparts. 
\begin{prop}\label{prop:eigcon}
Suppose \autoref{as:depstruc} with $p=4$ and \autoref{as:Weights} are satisfied. Then for each $j \ge 1$ and each $\lambda \in [0,\pi]$, and for bandwidths satisfying $b_T \to 0$ as $T \to \infty$ such that $b_T T \to \infty$,
\begin{enumerate}\itemsep-0.3ex
\item[(i)] $| {\beta}_j^{\lambda}- \hat{\beta}_j^{\lambda}|\overset{p}{\to} 0$;
\item[(ii)] If in addition $\inf_{l\ne j}\big\vert \beta^{\lambda}_j-\beta^{\lambda}_l\big\vert>0$, we have $\snorm{\,\hat{\Pi}^{\lambda}_j-{\Pi}^{\lambda}_j \,}_2 \overset{p}{\to}  0$,\\
\end{enumerate}
\vspace{-20pt}
\end{prop}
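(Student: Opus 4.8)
The plan is to transfer the mean-square consistency of $\hat{\F}_T^{\lambda}$ established in \autoref{thm:consF}(i) into convergence in probability and then invoke classical perturbation theory for the spectrum of compact self-adjoint operators. First I would note that $\norm{\hat{\F}_T^{\lambda} -{\F}^{\lambda}}^2_{S_2,p} \to 0$ together with Markov's inequality gives $\snorm{\hat{\F}_T^{\lambda} - \F^{\lambda}}_2 \overset{p}{\to} 0$, and since $\snorm{\cdot}_\infty \le \snorm{\cdot}_2$, also $\snorm{\hat{\F}_T^{\lambda} - \F^{\lambda}}_\infty \overset{p}{\to} 0$. Both $\F^{\lambda}$ and $\hat{\F}_T^{\lambda}$ are compact, self-adjoint and non-negative definite, so their spectra consist of real non-negative eigenvalues accumulating only at $0$, which is precisely the setting in which the perturbation bounds below apply.

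For (i) I would invoke Weyl's inequality (equivalently Lidskii's inequality) for the ordered eigenvalues of self-adjoint compact operators, which yields the Lipschitz bound $\sup_{j \ge 1}|\hat{\beta}_j^{\lambda} - \beta_j^{\lambda}| \le \snorm{\hat{\F}_T^{\lambda} - \F^{\lambda}}_\infty$. Combined with the operator-norm convergence from the first step, this gives $|\beta_j^{\lambda} - \hat{\beta}_j^{\lambda}| \overset{p}{\to} 0$ for each fixed $j$, proving (i) \citep[see e.g.,][]{HorvKok12}.

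For (ii), the gap condition $\delta_j := \inf_{l \ne j}\bigl|\beta_j^{\lambda} - \beta_l^{\lambda}\bigr| > 0$ forces $\beta_j^{\lambda}$ to be a \emph{simple} eigenvalue isolated from the rest of the spectrum, and because we work with the eigenprojectors $\Pi_j^{\lambda} = \phi_j^{\lambda} \otimes \phi_j^{\lambda}$ rather than the eigenfunctions, there is no sign or rotation ambiguity to resolve. The principal obstacle is the projector perturbation bound. I would represent $\Pi_j^{\lambda}$ via the Riesz integral $\Pi_j^{\lambda} = \frac{1}{2\pi \im}\oint_{\mathcal{C}_j}(z I - \F^{\lambda})^{-1}\,dz$ over the circle $\mathcal{C}_j$ centred at $\beta_j^{\lambda}$ of radius $\delta_j/2$, and similarly for $\hat{\Pi}_j^{\lambda}$. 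On the event $\{\snorm{\hat{\F}_T^{\lambda} - \F^{\lambda}}_\infty < \delta_j/4\}$, which by the first paragraph has probability tending to one, part (i) guarantees that $\hat{\beta}_j^{\lambda}$ lies inside $\mathcal{C}_j$ while all remaining empirical eigenvalues lie outside, so the same contour isolates $\hat{\beta}_j^{\lambda}$. The resolvent identity then gives $\hat{\Pi}_j^{\lambda} - \Pi_j^{\lambda} = \frac{1}{2\pi \im}\oint_{\mathcal{C}_j}(z I - \hat{\F}_T^{\lambda})^{-1}(\hat{\F}_T^{\lambda} - \F^{\lambda})(z I - \F^{\lambda})^{-1}\,dz$, and bounding each resolvent on $\mathcal{C}_j$ by $2/\delta_j$ (the distance from $\mathcal{C}_j$ to both spectra being at least $\delta_j/4$ on that event) yields $\snorm{\hat{\Pi}_j^{\lambda} - \Pi_j^{\lambda}}_2 \le \frac{C}{\delta_j}\snorm{\hat{\F}_T^{\lambda} - \F^{\lambda}}_2$ for an absolute constant $C$, using that the Hilbert--Schmidt norm is sub-multiplicative against the operator-norm resolvent factors. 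Feeding in the convergence in probability from the first paragraph then completes (ii) \citep[cf.][]{Bosq02,HorvKok12}. The delicate points are ensuring the contour simultaneously isolates the population and empirical $j$-th eigenvalue on a high-probability event, and keeping the $\snorm{\cdot}_2$-norm on the left-hand side so that exactly the stated convergence is obtained; both are handled by the resolvent estimate above.
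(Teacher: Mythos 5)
Your proposal is correct and follows the same overall strategy as the paper: convert the $\op^p$-consistency of \autoref{thm:consF}(i) into operator-norm convergence in probability, use a Weyl/Lidskii-type Lipschitz bound $\sup_j|\hat{\beta}_j^{\lambda}-\beta_j^{\lambda}| \le \snorm{\hat{\F}_T^{\lambda}-\F^{\lambda}}_{\infty}\le \snorm{\hat{\F}_T^{\lambda}-\F^{\lambda}}_{2}$ for part (i), and control the eigenprojector perturbation on a high-probability event encoding the gap $\delta_j$ for part (ii). Where you diverge is the projector step: the paper disposes of (ii) in one line by citing Proposition 3.1 of Mas and Menneteau, which provides the linearization $\hat{\Pi}^{\lambda}_j-{\Pi}^{\lambda}_j = \mathcal{T}_j(\hat{\F}_T^{\lambda} -{\F}^{\lambda}) + R_{j,T}$ with $\mathcal{T}_j$ bounded linear and $\snorm{R_{j,T}}_2 \mathrm{1}_{A_{j,T}} \le 8 \delta^{-2}_j\snorm{\hat{\F}_T^{\lambda} -{\F}^{\lambda}}^2_2$ on the event $A_{j,T}$, whereas you re-derive the bound from first principles via the Riesz contour representation of the projectors and the second resolvent identity. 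Your route is self-contained and makes the geometry of the gap event explicit; the paper's citation buys brevity and, moreover, the MM03 decomposition isolates the leading \emph{linear} term $\mathcal{T}_j(\hat{\F}_T^{\lambda}-\F^{\lambda})$, which would be needed for distributional results on the projectors rather than mere consistency — your contour bound $\snorm{\hat{\Pi}_j^{\lambda}-\Pi_j^{\lambda}}_2 \le C\delta_j^{-1}\snorm{\hat{\F}_T^{\lambda}-\F^{\lambda}}_2$ suffices for the proposition but discards that structure. Two cosmetic points, neither affecting validity: with your contour of radius $\delta_j/2$ and distance $\delta_j/4$ from $\mathcal{C}_j$ to the empirical spectrum, the empirical resolvent bound is $4/\delta_j$, not $2/\delta_j$ (the contour length $\pi\delta_j$ still yields the stated $C\delta_j^{-1}$ rate); and you should state explicitly that on your event $\hat{\beta}_j^{\lambda}$ is simple — if it coincided with some $\hat{\beta}_l^{\lambda}$, $l\ne j$, that eigenvalue would lie inside $\mathcal{C}_j$, contradicting the Weyl bound — so that the Riesz projector over $\mathcal{C}_j$ is indeed the rank-one operator $\hat{\Pi}_j^{\lambda}$.
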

The proof of $(i)$ and $(ii)$ follow (almost) immediately from results of \cite{GGK90} and \cite{MM03}, respectively. Details are given in the Appendix. Note that the consistency is given for the projectors since the empirical eigenfunctions can only be identified up to rotation on the unit circle, that is, we obtain a version $c(\lambda)\hat{\phi}_j^{\lambda}$, for some $c(\lambda) \in \mathbb{C}$ with $|c(\lambda)|^2=1$. In contrast, the projectors are invariant to the rotation. 
}

The next result is the joint distributional convergence of a set of estimators at distinct frequencies to uncorrelated Gaussian elements of $S_2(H)$.
\begin{thm}\label{thm:ANF}
Suppose \autoref{as:depstruc} with $p=4$ and \autoref{as:Weights} are satisfied. Let $\lambda_1,\ldots, \lambda_d \in [0,\pi]$ be distinct. Then, for \blu{bandwidths satisfying} $b_T \to 0$ such that $b_T T \to \infty$ as $T \to \infty$ 
\[
\sqrt{b_T T}\big(\hat{\F}^{\lambda_j} -\E\hat{\F}^{\lambda_j}\big)_{j=1,\ldots, d} \Rightarrow_T \big(\mathfrak{F}^{\lambda_j}\big)_{j=1,\ldots,d}
\]
where $\mathfrak{F}^{\lambda_j}, j=1,\ldots, d$ are zero-mean jointly independent complex Gaussian elements of $S_2(H)$, with covariance operator
 \[\cov\big(\mathfrak{F}^{\lambda_j},\mathfrak{F}^{\lambda_j}\big)=   \kappa^2 \Big( \F^{(\lambda_j)}\widetilde{\otimes}\F^{(\lambda_j)} +\mathrm{1}_{\{0,\pi\}}\F^{(\lambda_j)} \widetilde{\otimes}_{\top} \F^{(\lambda_j)}\Big)\] and with
 pseudocovariance operator 
\[\cov\big(\mathfrak{F}^{\lambda_j},\overline{\mathfrak{F}^{\lambda_j}}\big) =\kappa^2 \Big(\mathrm{1}_{\{0,\pi\}}\F^{(\lambda_j)}\widetilde{\otimes}\F^{(\lambda_j)} +\F^{(\lambda_j)} \widetilde{\otimes}_{\top} \F^{(-\lambda_j)} \Big).\]
If the conditions of \autoref{thm:consF}$\mathrm{(ii)}$ are also satisfied, then 
\[\sqrt{b_T T}\big(\hat{\F}^{\lambda_j} -{\F}^{\lambda_j}\big)_{j=1,\ldots, d} \Rightarrow_T \big(\mathfrak{F}^{\lambda_j}\big)_{j=1,\ldots,d}~\]
for bandwidths satisfying $b_T =o(T^{-1/3})$.
\end{thm}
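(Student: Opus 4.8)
The plan is to recognise $\hat{\F}^{\lambda}$ as a rescaled quadratic form and invoke \autoref{thm:funcdist}. By the representation \eqref{eq:relFQ}, for each fixed frequency $\lambda_j$ we have $\hat{\F}^{\lambda_j} = \frac{1}{2\pi T}\hat{\mathcal{Q}}^{\lambda_j}_T$ with scalar weights $\phi^{(\lambda_j)}_{T,t-s} = w(b_T(t-s))e^{\im\lambda_j(t-s)}$, i.e. $A_{T,t} = w(b_T t)I_H$ in the notation of \autoref{as:phi}; the evenness $A_{T,t}\equiv A_{T,-t}$ required there holds because $w$ is even, and crucially $A_{T,t}$ does not depend on $j$, so a single normalisation $\snorm{\Phi_T}_F$ serves all frequencies. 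The proof then breaks into four steps: (a) verify that these weights satisfy \autoref{as:phi}; (b) determine the order of $\snorm{\Phi_T}^2_F$ and the resulting deterministic rescaling; (c) read off the limiting covariance structure from $\Gamma$ and $\Sigma$ of \autoref{thm:funcdist} and use the factor $\eta(\lambda_i\pm\lambda_j)$ to obtain asymptotic independence; and (d) for the second assertion, control the bias $\E\hat{\F}^{\lambda_j} - \F^{\lambda_j}$.

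For step (a), since the weights are scalar the operator norms reduce to moduli, $\snorm{\phi^{(\lambda)}_{T,t}}_\infty = |w(b_T t)|$. Conditions (i)--(ii) are immediate from \autoref{as:Weights}: with $\varrho_T^2 = \sum_{t=1}^T w^2(b_T t)\sim\kappa/(2b_T)$ one checks that $T\varrho_T^2$ is of the same order as $\snorm{\Phi_T}^2_F$, and that $\max_t w^2(b_T t) = O(1) = o(\varrho_T^2)$ because $\varrho_T^2\to\infty$. Condition (iii) follows from the piecewise continuity of $w$: away from the finitely many jumps a mean-value estimate gives $\sum_t|w(b_T t)-w(b_T(t-1))|^2 = O(b_T) = o(\varrho_T^2)$, while the finitely many jump-crossings contribute only $O(1)$. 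Condition (iv) is the delicate one: after reducing the inner operator norm to the discrete cross-correlation $\rho_T(r):=\sum_a w(b_T(a+r))w(b_T a)$, one must show $T\sum_{r=1}^{T-1}|\rho_T(r)|^2 = o(\snorm{\Phi_T}^4_F) = o(T^2/b_T^2)$. The decay of $\rho_T(r)$ in $r$ is exactly what the tail control in \autoref{as:Weights} supplies; for standard (e.g. compactly supported) windows a convolution estimate of Young type gives $\sum_r|\rho_T(r)|^2 = O(b_T^{-3})$, which is $o(T/b_T^2)$ precisely because $b_T T\to\infty$ forces $1/b_T = o(T)$.

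For step (b), writing $\snorm{\Phi_T}^2_F = \sum_{|h|<T}(T-|h|)w^2(b_T h)$, the $|h|$-weighted term is $O(b_T^{-2})$ and negligible against $T\sum_h w^2(b_T h)\sim T\kappa/b_T$, so $\snorm{\Phi_T}^2_F\sim T\kappa/b_T$. Consequently $\sqrt{b_T T}(\hat{\F}^{\lambda_j}-\E\hat{\F}^{\lambda_j}) = \frac{\snorm{\Phi_T}_F\sqrt{b_T T}}{2\pi T}\,(\snorm{\Phi_T}^2_F)^{-1/2}(\hat{\mathcal{Q}}^{\lambda_j}_T - \E\hat{\mathcal{Q}}^{\lambda_j}_T)$, where the deterministic prefactor converges to a finite constant. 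Applying \autoref{thm:funcdist} jointly over $\lambda_1,\ldots,\lambda_d$, the bracketed vector converges to $(\breve{\mathcal{Q}}^{\lambda_j})_j$, and multiplication by the prefactor rescales $\Gamma$ and $\Sigma$ by the square of that constant, producing the covariance and pseudocovariance operators in the statement. Since $\lambda_1,\ldots,\lambda_d\in[0,\pi]$ are distinct, for $i\ne j$ neither $\lambda_i-\lambda_j$ nor $\lambda_i+\lambda_j$ is an integer multiple of $2\pi$, so $\eta(\lambda_i\pm\lambda_j)=0$ and the limiting components are jointly independent.

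Finally, step (d): replacing $\E\hat{\F}^{\lambda_j}$ by $\F^{\lambda_j}$ amounts to showing $\sqrt{b_T T}\,\snorm{\E\hat{\F}^{\lambda_j}-\F^{\lambda_j}}_2\to 0$. Under the additional hypotheses of \autoref{thm:consF}(ii) the squared bias is $O(b_T^2)$, whence the bias is $O(b_T)$ and $\sqrt{b_T T}\cdot O(b_T) = O(\sqrt{b_T^3 T})\to 0$ exactly when $b_T = o(T^{-1/3})$; combining this with the preceding convergence via Slutsky's lemma yields the second assertion. I expect the verification of \autoref{as:phi}(iv) to be the main obstacle, since it is the one place where the global shape of $w$ (through the decay of its cross-correlation and the tail condition of \autoref{as:Weights}), rather than mere boundedness or pointwise behaviour, must be brought to bear.
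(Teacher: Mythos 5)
Your proposal is correct and takes essentially the same route as the paper: write $\hat{\F}^{\lambda_j}=(2\pi T)^{-1}\hat{\mathcal{Q}}^{\lambda_j}_T$ with scalar weights $w(b_T(t-s))e^{\im\lambda_j(t-s)}$, verify \autoref{as:phi} from \autoref{as:Weights}, identify $\snorm{\Phi_T}^2_F\sim \kappa^2 T/b_T$ so the prefactor converges and rescales $\Gamma,\Sigma$ of \autoref{thm:funcdist} into the stated $\kappa^2$-operators, use $\eta(\lambda_i\pm\lambda_j)=0$ for distinct frequencies in $[0,\pi]$, and remove the bias via \autoref{thm:consF}(ii) with $b_T=o(T^{-1/3})$ --- and even your flagged ``delicate'' step (iv) matches the paper's treatment, which splits the inner sum at $M/b_T$ and uses the tail condition $\sup_{0\le b\le 1}b\sum_{|h|\ge M/b}w^2(bh)\to 0$ (rather than compact support) to reach the same bound $O(Tb_T^{-3})=o(\snorm{\Phi_T}^4_F)$, valid since $1/b_T=o(T)$. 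The only cosmetic slip is your mean-value estimate in (iii), which presumes a smoothness that \autoref{as:Weights} does not grant; the paper instead uses uniform continuity off the finitely many discontinuities plus the same $M/b_T$ tail split to get the (weaker but sufficient) bound $o(\varrho_T^2)$.
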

Observe that if $\lambda_j \in \{0, \pi\}$, then $\mathfrak{F}^{\lambda_j}$ is real Gaussian. Finally, we obtain the following corollary on the distributional properties of the estimator of the long run covariance operator, \blu{which can be seen to improve upon the results in \cite{BerHorRi17} and \cite{pt13}}.
\begin{Corollary} \label{cor:lonrun}
Under the conditions of \autoref{thm:ANF}, 
\[
\sqrt{b_T T}2\pi \big(\hat{\F}^{(0)} -{\F}^{(0)}\big) \Rightarrow_T \mathcal{N}_{S_2(H)}(0, 4 \pi^2\Gamma^{(0)}) 
\]
where $\Gamma^{(0)} =  \kappa^2 \Big( \F^{(0)}\widetilde{\otimes}\F^{(0)} +\F^{(0)} \widetilde{\otimes}_{\top} \F^{(0)}\Big)$.
\end{Corollary}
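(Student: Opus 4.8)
The plan is to obtain this as the single-frequency specialization of \autoref{thm:ANF} together with a scaling argument. First I would apply \autoref{thm:ANF} with $d=1$ and $\lambda_1 = 0$, which is permissible since $0 \in [0,\pi]$ and the hypotheses assumed here are precisely those of \autoref{thm:ANF}. This yields
\[
\sqrt{b_T T}\big(\hat{\F}^{(0)} - \F^{(0)}\big) \Rightarrow_T \mathfrak{F}^{(0)},
\]
where $\mathfrak{F}^{(0)}$ is a zero-mean Gaussian element of $S_2(H)$ whose covariance and pseudocovariance operators are read off from the formulas in \autoref{thm:ANF} at $\lambda_j = 0$.

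The key observation is that at $\lambda = 0$ the indicator $\mathrm{1}_{\{0,\pi\}}$ equals one, so both the covariance and the pseudocovariance collapse to the same expression $\kappa^2\big(\F^{(0)}\widetilde{\otimes}\F^{(0)} + \F^{(0)}\widetilde{\otimes}_{\top}\F^{(0)}\big) = \Gamma^{(0)}$. Since covariance and pseudocovariance coincide and $\F^{(0)} = \tfrac{1}{2\pi}\sum_{h}C_h$ is a real self-adjoint operator (using $C_{-h}=C_h^{\dagger}$), substituting $\Gamma = \Sigma = \Gamma^{(0)}$ into the block covariance structure of \autoref{thm:funcdist} makes the off-diagonal and imaginary blocks vanish. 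Consequently $\Im(\mathfrak{F}^{(0)}) = O_H$ almost surely and $\mathfrak{F}^{(0)} \sim \mathcal{N}_{S_2(H)}(0,\Gamma^{(0)})$ is a real Gaussian element, exactly as anticipated in the remark following \autoref{thm:ANF}.

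Finally I would invoke the continuous mapping theorem for the bounded linear map $x \mapsto 2\pi x$ on $S_2(H)$, giving
\[
\sqrt{b_T T}\,2\pi\big(\hat{\F}^{(0)} - \F^{(0)}\big) \Rightarrow_T 2\pi\,\mathfrak{F}^{(0)} \sim \mathcal{N}_{S_2(H)}(0, 4\pi^2\Gamma^{(0)}),
\]
since rescaling a centered Gaussian by $2\pi$ multiplies its covariance operator by $(2\pi)^2 = 4\pi^2$. I do not expect a genuine obstacle, as this is a direct corollary; the only step needing care is the degeneration of the complex Gaussian limit to a real one at $\lambda = 0$, which rests on the coincidence of covariance and pseudocovariance and the reality of $\Gamma^{(0)}$. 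Both are immediate from the explicit formulas and the self-adjointness of $\F^{(0)}$, so once \autoref{thm:ANF} is in hand the argument reduces to bookkeeping.
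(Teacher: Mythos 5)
Your proposal is correct and takes essentially the approach the paper intends: the corollary is stated without separate proof precisely because it is the $d=1$, $\lambda=0$ specialization of \autoref{thm:ANF} (specifically its second statement, which supplies the centering at $\F^{(0)}$ under the additional conditions of \autoref{thm:consF}(ii) and $b_T=o(T^{-1/3})$), followed by the $2\pi$ rescaling. Your bookkeeping is exactly right --- at $\lambda=0$ the indicator makes covariance and pseudocovariance coincide with $\Gamma^{(0)}$, which together with the self-adjointness of $\F^{(0)}$ forces the real Gaussian limit noted in the remark after \autoref{thm:ANF}, and the continuous mapping step multiplies the covariance by $4\pi^2$.
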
 

\section{\blu{Relation to functional cumulant mixing conditions}} \label{sec:sec5}

Unlike the majority of existing literature \citep[see e.g.,][]{pt13,Zhang16,PhPan2018,vde16}, the results of this paper do not require higher order functional cumulant mixing conditions of the form
\[
\sum_{t_1,\ldots, t_{n-1} \in \znum}\snorm{\, \mathrm{cum}(X_{t_1},\ldots, X_{t_{n-1}},X_{0})}_{2} < \infty. \tageq \label{eq:cumk1}
\]
Here, the $n$-th order joint cumulant tensor of $X_{t_1}, \ldots, X_{t_n} \in \mathcal{L}_H^n$, which can be viewed as an element of $ S_2({\otimes^{\lfloor \frac{n+1}{2} \rfloor}H, \otimes^{\lfloor \frac{n}{2}\rfloor} H})$, is defined by
\[
\mathrm{cum}(X_{t_1},\ldots, X_{t_n}) = \sum_{\boldsymbol{v}=(v_1,\ldots v_{\rho})} (|\rho|-1)! (-1)^{|\rho|-1} \Pi_{\boldsymbol{v}}\Big( \otimes_{r=1}^{\rho}\E \big[\otimes_{i \in v_r} X_{t_i}\big]\Big), \tageq \label{eq:cumtens}
\]
where the summation extends over all unordered partitions $\boldsymbol{v}$ of $\{1,\ldots,n\}$ and $\Pi_{\boldsymbol{v}}$ denotes the permutation operator that maps the components of the tensor back into the original order, that is, $\Pi_{\boldsymbol{v}}( \otimes^\rho_{r=1} \otimes_{{t_i} \in v_r} X_{t_i}) = X_{t_1} \otimes \cdots \otimes X_{t_n}$ \citep[see e.g.,][]{vde16}. The cumulant tensor in \eqref{eq:cumtens}  measures the joint statistical dependence of order $n$ and condition \eqref{eq:cumk1} ensures the span of dependence is small enough for the existence -- in a Hilbert-Schmidt sense -- of $n$-th order spectral cumulant tensors (functional polyspectra), which can be seen to capture nonlinear dynamics of the process for $n \ge 3$. 
When not assumed explicitly, it can in general be difficult to verify if conditions of the form \eqref{eq:cumk1} are satisfied. Because of their natural relation to higher order functional polyspectra and their frequent usage as an underlying assumption in existing literature on statistics that are of the form \eqref{eq:Q1}, it is however of interest to understand how these relate to the cumulative measure in \autoref{as:depstruc}, which is more easily tractable. In order to do so, we make a few simple observations. Firstly,  cumulant operators can be viewed as generalizations of covariance operators, which is easily seen from rewriting  \eqref{eq:cumtens} as
\[
\mathrm{cum}(X_{t_1},\ldots, X_{t_n}) =\E[X_{t_1} \otimes \cdots \otimes X_{t_n}] - \sum_{\boldsymbol{v}; |\rho| \ne 1}\Pi_{\boldsymbol{v}}\Big( \otimes_{r=1}^{\rho} \mathrm{cum}(X_{t_i}; i \in v_r )\Big). \tageq \label{eq:cumcov}
\]
Hence, the $n$-th order cumulant tensor can be interpreted as a measure of the $n$-th order joint dependence  corrected for by all lower order joint dependencies, i.e., it 
captures the interaction between the $n$ variables that is not captured by any subset of the $n$ variables.  
Secondly, the cumulant of order $n$ of a stationary process is translation invariant and is a function of $n-1$ time differences w.r.t. a chosen base time, e.g., $\mathrm{cum}(X_{t_1},\ldots, X_{t_n}) = \mathrm{cum}(X_{t_1-t_n},\ldots, X_{t_{n-1}-t_n}, X_0)$. 

For $p=n=2$, a direct relation is then in fact given in Proposition \autoref{prop:sumCh}, which shows that $\sum_{t=0}^{\infty}\|X_0 - \E[X_0|\G_{0,\{-t\}}]\|_{\hi,2} < \infty$  implies $\sum_{t\in \znum} \snorm{C_t}_2=\sum_{t\in \znum} \snorm{\,\mathrm{cum}(X_{t},X_0)}_{2} < \infty.$
Note that this is intuitive since \autoref{as:depstruc} provides a cumulative measure of the dependence of $X_0$ on $\epsilon_{-t}$, i.e., the dependence on the element of the data-generating mechanism $t$ lags apart. This captures all second order dynamics; there is only one direction from the base time in which the dependence span needs to be controlled for and \autoref{as:depstruc} with $p=n=2$ suffices. 

This measure appears however inadequate to capture the higher order dynamics that come into play for $n \ge 3$.  More specifically, the coefficients of dependence \eqref{eq:depstruc3} cannot directly control for the interactions in the various directions since the lags between the $n-1$ indices cannot be exploited. Tedious calculations in the appendix indicate that in order to capture the magnitude of these dynamics, we require a  generalization of \eqref{eq:depstruc3} to
 \[\nu_{X,\hi,p}(j_{1},\ldots,j_{n-1}): = \bignorm{X_0+\sum_{i=1}^{n-1}(-1)^{i} \sum_{1\le l_1 < \ldots <l_{i}\le n-1} \E[X_0 |\G_{0,\{-j_{l_1},\ldots,-j_{l_i}\}}]}_{\hi,p} \tageq \label{eq:sufcon2}\]
for $j_1, \ldots, j_{n-1} \ge 0$. Observe that  \eqref{eq:sufcon2} accounts for the dependence in all $n-1$ directions from the base time simultaneously, corrected for by  the ``over-counted'' interactions (in spirit of the exclusion-inclusion principle). It therefore fully encapsulates the dynamics of a $n$-th order joint cumulant tensor. We remark that a slightly weaker (yet less intuitive) condition was derived in the calculations in terms of composite projection operators (see \eqref{eq:tightbound}). 
Using \eqref{eq:sufcon2}, we obtain the following sufficient condition to ensure summability of cumulant tensors.
\begin{thm} \label{thm:cumrel4}\vspace{-5pt}
Suppose $\{X_t\colon t\in\mathbb{Z}\}$ is a centered stationary functional time series in $\op^p_H$ with $p=n \ge 2$.
Suppose moreover that
\begin{align*}
\sum_{j_{1}, \dots, j_{k}=0}^{\infty}\nu_{X,\hi,p}(j_{1},\ldots,j_{k}) < \infty \quad \text{ for } k= n-1. \tageq \label{eq:sufcon}
\end{align*}
Then, for all $i \le p$, \[\sum_{t_1,\ldots, t_{i-1} \in \znum}\snorm{\, \mathrm{cum}(X_{t_1},\ldots, X_{t_{i-1}}, X_{0})}_{2} < \infty.\] 
\end{thm}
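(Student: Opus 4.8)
The plan is to exploit the multilinearity of the joint cumulant together with the martingale--difference structure of the projections $P_j$, and then to control the surviving terms by composite projections that are in turn dominated by the coefficients in \eqref{eq:sufcon2}. First I would use stationarity to fix the base time: since $\mathrm{cum}(X_{t_1},\ldots,X_{t_{i-1}},X_0)$ depends only on the $i-1$ lags, a translation reduces the claim to showing that a sum over $i-1$ free time differences of $\snorm{\,\cdot\,}_2$-norms is finite, which matches the $i-1=k$ arguments of $\nu_{X,\hi,p}$. The cases $i<n$ follow from the very same scheme applied to $i$ coordinates, so I would focus on the top order $i=n$.

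Next I would insert the projection decomposition $X_{t_k}=\sum_{a_k\le t_k}P_{a_k}(X_{t_k})$ coming from \eqref{eq:ProjX} into each slot and expand by multilinearity, obtaining $\mathrm{cum}(X_{t_1},\ldots,X_{t_n})=\sum_{a_1,\ldots,a_n}\mathrm{cum}\big(P_{a_1}(X_{t_1}),\ldots,P_{a_n}(X_{t_n})\big)$. The decisive reduction is a vanishing lemma: if the maximal index $a^\ast=\max_k a_k$ is attained at a unique coordinate $m$, then $P_{a^\ast}(X_{t_m})$ is a martingale difference with $\E[\,\cdot\,|\G_{a^\ast-1}]=O_H$ while every other factor is $\G_{a^\ast-1}$-measurable; inspecting the partition formula \eqref{eq:cumtens} shows that for every partition the block containing $m$ carries a factor of vanishing conditional mean, so the whole cumulant vanishes. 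Hence only those configurations survive in which the maximum is attained at least twice, i.e. precisely the configurations in which the coordinates are forced to share innovations.

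The heart of the proof, and the step I expect to be the main obstacle, is bounding and then summing the surviving terms. A naive use of the generalized H\"older inequality $\snorm{\mathrm{cum}(Y_1,\ldots,Y_n)}_2\le C_n\prod_k\|Y_k\|_{\hi,n}$ with $Y_k=P_{a_k}(X_{t_k})$ is \emph{not} sharp enough: writing $b_k=t_k-a_k\ge0$ and summing $\prod_k\|P_0(X_{b_k})\|_{\hi,n}$ over the $t$'s subject only to ``maximum attained twice'' still leaves a half-line of free time indices and diverges, because the product-of-marginals bound discards exactly the joint dependence that the cumulant records. The remedy is to regroup the surviving projections into composite projection operators acting on $X_0$, the number of whose coupled coordinates equals the number of distinct lags, and to bound the cumulant by a product of $\|\cdot\|_{\hi,n}$-norms of these composites rather than of single projections; matching the partition structure of \eqref{eq:cumtens} to this grouping is the delicate bookkeeping.

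Finally I would dominate each composite projection by a generalized coefficient. Exactly as $\|P_0(X_j)\|_{\hi,p}\le\nu_{\hi,p}(X_j)$ is obtained from the identity $P_0(X_j)=\E[X_j-\E[X_j|\G_{j,\{0\}}]\,|\,\G_0]$ via Jensen and the contraction property of conditional expectation, the alternating inclusion--exclusion sum defining the composite projection is the conditional expectation given $\G_0$ of the alternating coupling difference appearing inside \eqref{eq:sufcon2}, so Jensen gives $\|\text{composite}\|_{\hi,p}\le\nu_{X,\hi,p}(j_1,\ldots,j_{n-1})$ with the $j$'s the relevant lags. Summing the resulting estimate over $t_1,\ldots,t_{n-1}$ then collapses the lattice sum to a constant multiple of $\sum_{j_1,\ldots,j_{n-1}\ge0}\nu_{X,\hi,p}(j_1,\ldots,j_{n-1})$, which is finite by hypothesis \eqref{eq:sufcon}. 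This yields $\sum_{t_1,\ldots,t_{n-1}}\snorm{\mathrm{cum}(X_{t_1},\ldots,X_{t_{n-1}},X_0)}_2<\infty$ and completes the argument.
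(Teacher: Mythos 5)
Your plan coincides in all essentials with the paper's proof of Theorem \ref{thm:cumrel4}: insert the projection decomposition \eqref{eq:ProjX}, observe via the martingale property that a configuration dies whenever the maximal projection index is attained at a unique coordinate (the paper's ``the two largest elements of $J$ must be equal''), and bound the survivors not by the divergent product-of-marginals estimate but by composite coupled projections, which Jensen and the contraction property dominate by the coefficients \eqref{eq:sufcon2} (the paper's bound \eqref{eq:tightbound}), so that \eqref{eq:sufcon} closes the lattice sum. The only cosmetic difference is that you expand the cumulant itself by multilinearity, whereas the paper expands the moments inside the partition formula \eqref{eq:cumtens} and handles the lower-order product blocks (e.g.\ the $\Pi_{1324}$ term) separately; the ``delicate bookkeeping'' you flag as the main obstacle is exactly what the paper carries out for $k=4$ via Lemma \ref{lem:itproj} and the telescoping identity for $x_1y_1-x_2y_2-x_3y_3+x_4y_4$.
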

For $k\ge 2$, \eqref{eq:sufcon} can be seen to capture the nonlinear dynamics in the process, while it coincides with the standard measure in \eqref{eq:depstruc2} for $k=1$. Condition \eqref{eq:sufcon} is weak in the sense that for $k \ge 2$ it is in fact identically zero for linear processes, in which case \eqref{eq:sufcon} does not impose additional constraints. Neither summability of cumulants nor  \autoref{as:depstruc} are however required for linear processes \citep[see e.g.,][]{pt13,vde16}. 
The following provides an upper bound on \eqref{eq:sufcon} in terms of a weighted version of the standard coefficients of dependence.
\begin{prop} \label{prop:sufcon}
For $p \ge 2$,
\[
\sum_{j_{1}, \dots, j_{k}=0}^{\infty}\nu_{X,\hi,p}(j_{1},\ldots,j_{k}) \le 2^{k-1} \sum_{j=0}^{\infty} j^{k-1}\nu_{\hi,p}(X_j) \quad \text{ for all } 1 \le k \le p-1. \tageq \label{eq:lv}
\]
\end{prop}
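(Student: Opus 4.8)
The plan is to recognise the alternating inclusion–exclusion sum defining $\nu_{X,\hi,p}(j_1,\ldots,j_k)$ as the $\op^p_H$-norm of a product of commuting difference operators, and then to peel these operators off one lag at a time. First I would record the elementary but crucial observation that, since $X_0=g(\epsilon_0,\epsilon_{-1},\ldots)$ is independent of the primed copies, conditioning on $\G_{0,\{-j\}}$ simply integrates out the single coordinate $\epsilon_{-j}$. Writing $Q_j:=\E[\,\cdot\mid\G_{0,\{-j\}}]$, the operators $Q_j$ are thus idempotent $L^p$-contractions that commute with one another (they average over distinct, independent coordinates), and for any index set one has $\E[X_0\mid\G_{0,\{-j_{l_1},\ldots,-j_{l_i}\}}]=Q_{j_{l_1}}\cdots Q_{j_{l_i}}X_0$. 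Inclusion–exclusion then yields the identity
\[
X_0+\sum_{i=1}^{k}(-1)^{i}\sum_{1\le l_1<\cdots<l_i\le k}\E[X_0\mid\G_{0,\{-j_{l_1},\ldots,-j_{l_i}\}}]=\prod_{l=1}^{k}(I-Q_{j_l})\,X_0,
\]
so that $\nu_{X,\hi,p}(j_1,\ldots,j_k)=\bignorm{\prod_{l=1}^{k}(I-Q_{j_l})X_0}_{\hi,p}$. The idempotency $Q_j^2=Q_j$ is what makes this identity consistent with the set notation $\{-j_{l_1},\ldots,-j_{l_i}\}$ when some of the $j_l$ coincide.

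Next I would establish a per-tuple estimate. Fix a tuple and let $m=\max_{1\le l\le k}j_l$, attained at some position $l^\star$. Since the $Q_{j_l}$ commute, I may write the product with the factor at the maximal lag acting directly on $X_0$, and bound each of the remaining $k-1$ factors by its operator norm. The triangle inequality together with the contraction property $\bignorm{Q_j Y}_{\hi,p}\le\bignorm{Y}_{\hi,p}$ gives $\bignorm{(I-Q_{j})Y}_{\hi,p}\le 2\bignorm{Y}_{\hi,p}$, while stationarity identifies the retained factor, $\bignorm{(I-Q_{j})X_0}_{\hi,p}=\|X_0-\E[X_0\mid\G_{0,\{-j\}}]\|_{\hi,p}=\nu_{\hi,p}(X_{j})$. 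Combining these yields
\[
\nu_{X,\hi,p}(j_1,\ldots,j_k)\le 2^{k-1}\,\nu_{\hi,p}(X_{m}),\qquad m=\max_{1\le l\le k}j_l.
\]

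Finally I would carry out the summation, organising the tuples by their maximal lag. Here I would exploit that $\nu_{X,\hi,p}$ is symmetric in its arguments (the $Q_{j_l}$ commute), which lets me pass to increasing configurations: for a fixed maximal value $m$, the remaining $k-1$ lags range over $\{0,\ldots,m-1\}$, contributing the combinatorial weight $m^{k-1}$, whence
\[
\sum_{j_1,\ldots,j_k=0}^{\infty}\nu_{X,\hi,p}(j_1,\ldots,j_k)\le 2^{k-1}\sum_{m=0}^{\infty}m^{k-1}\nu_{\hi,p}(X_m).
\]

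The main obstacle I anticipate is the combinatorial bookkeeping needed to reach the clean constant $2^{k-1}$ and the exact polynomial weight $m^{k-1}$: naively summing the per-tuple bound over \emph{all} ordered tuples (rather than increasing configurations) and over coincident indices inflates the constant, so the symmetry reduction and the idempotency $Q_j^2=Q_j$ must be used carefully to collapse repeated indices and to avoid double-counting the non-maximal lags. The finiteness hypothesis $\sum_j j^{k-1}\nu_{\hi,p}(X_j)<\infty$ then guarantees absolute convergence and legitimises these rearrangements.
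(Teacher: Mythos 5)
Your steps (1) and (2) are correct and are, in substance, the paper's own argument: writing the inclusion--exclusion sum as $\prod_{l=1}^{k}(I-Q_{j_l})X_0$ and peeling off one factor at operator-norm cost $2$ is exactly the paper's inequality \eqref{eq:minbound}, which is derived there via Minkowski's inequality and the conditional-Jensen contraction rather than in operator language; iterating it yields the same per-tuple bound $\nu_{X,\hi,p}(j_1,\ldots,j_k)\le 2^{k-1}\nu_{\hi,p}(X_{j_l})$ for \emph{any} retained index $l$, in particular the maximal one. Up to this point your write-up is, if anything, more transparent than the paper's, and your remark that the idempotency $Q_j^2=Q_j$ reconciles the product identity with the set notation $\G_{0,\{-j_{l_1},\ldots,-j_{l_i}\}}$ at coincident lags is correct and necessary.

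The genuine gap is the final summation step --- precisely the bookkeeping you flagged and then asserted away. The left-hand side of \eqref{eq:lv} is a sum over \emph{ordered} tuples, and the number of ordered tuples with $\max_l j_l=m$ is $(m+1)^k-m^k\ge k\,m^{k-1}$, not $m^{k-1}$; passing to increasing configurations by symmetry merely trades this surplus for the permutation multiplicities (up to $k!$) that must be reinstated when returning to the ordered sum. What your counting honestly delivers is
\[
\sum_{j_1,\ldots,j_k=0}^{\infty}\nu_{X,\hi,p}(j_1,\ldots,j_k)\;\le\;2^{k-1}\sum_{m=0}^{\infty}\big((m+1)^k-m^k\big)\,\nu_{\hi,p}(X_m)\;\le\;k\,2^{k-1}\sum_{m=0}^{\infty}(m+1)^{k-1}\,\nu_{\hi,p}(X_m),
\]
and the exact display \eqref{eq:lv} is not recoverable by any refinement of the route: for $k=2$ and an i.i.d.\ sequence $X_t=f(\epsilon_t)$ one has $\nu_{X,\hi,p}(0,0)=\|X_0-\E[X_0|\G_{0,\{0\}}]\|_{\hi,p}=\nu_{\hi,p}(X_0)>0$ (since $\G_{0,\{0,0\}}=\G_{0,\{0\}}$), whereas $2\sum_{j\ge0}j\,\nu_{\hi,p}(X_j)=0$, so the diagonal tuples alone already break the stated weight $m^{k-1}$. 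To be fair, the paper compresses the entire counting into ``applying \eqref{eq:minbound} consecutively gives the result'' and therefore shares this defect; your corrected bound differs from \eqref{eq:lv} only by the factor $k$ and the shift $m\mapsto m+1$, which is harmless for the proposition's sole use (the summability fed into Theorem \ref{thm:cumrel4}), but you should state it in that form rather than claim the clean weight $m^{k-1}$.
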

Observe that for $k=1$ this is an equality.  Additionally, note that the exponential factor in the upper bound on the right hand side of \eqref{eq:lv} increases in $k$. As this provides an upperbound for the $k+1$-th order cumulant tensor, this can be seen to control the interaction in the various directions (and hence the dependence span) in  a very rough way. It is worth mentioning that our findings corroborate with those in \cite{ZhangWu18} who obtain a similar bound  to ensure summability of the third order cumulant of a univariate time series.
 
\medskip
\acknowledgements{This work has been supported by the Collaborative Research Center ``Statistical modeling of nonlinear dynamic processes'' (SFB 823, Teilprojekt A1, C1) of the German Research Foundation
(DFG). The author kindly thanks Holger Dette and Davide Giraudo for some useful discussions. Furthermore, the author sincerely thanks the Editor and two anonymous referees for their constructive comments that helped to produce an improved version of the original paper.}

\appendix

\section{Inequalities for $H$-valued martingales and linear transforms} \label{sec:Ap1}

Let $H$ be a Hilbert space. For a probability space $(\Omega,\mathcal{A},\mathcal{G}_{\infty},\mathbb{P})$ and $\G=\{\G_t\}_{t \ge 0}$ a non-decreasing sequence of sub-$\sigma$-fields of $\G_{\infty}$, let $\{M_t\} \in \op^p_H$ be a martingale with respect to $\G$ and note that we can write $M_n=\sum_{k=0}^{n}D_k$, where $\{D_k\}$ denotes its difference sequence. Additionally denote the variable
\[V_n(M)=(\sum_{k} \|D_k\|^2_H )^{1/2}\]
which we call the square function of $M$. It was shown \cite[][theorem 3.1]{Burk88} that for $H$-valued martingales, we have  for $1<p<\infty$
 \begin{align}
 (p^\star-1)^{-1}(\E |V(M)|^p)^{1/p} \le   (\E \|M\|_H^p)^{1/p}\le  (p^\star-1)(\E |V(M)|^p)^{1/p} \label{eq:burkH}
 \end{align}
 where $p^\star=\max(p,\frac{p}{p-1})$. As a consequence we have the following lemma, which extends lemma 1 of \cite{WuSh07}. 
\begin{lemma}\label{lem:Burkh}
Let ${\{M_k\}}_{k=1,\ldots,n} \in \op^p_H, p>1$, be a martingale with respect to $\G$ with $\{D_k\}$ denoting its difference sequence and let $\{A_k\}_{k=1,\ldots,n} \in S_\infty(H)$. Then, for $q=\min(2,p)$,
\[
\Bignorm{\sum_{k=1}^{n} A_k (D_k)}^q_{\hi,p} \le K^q_{p} \sum_{k=1}^{n} \snorm{A_k}_{\infty}^q \|D_k\|^q_{\hi,p}
\]
where $K^q_{p}= (p^\star-1)^q$ with $p^\star=\max(p,\frac{p}{p-1})$.
\end{lemma}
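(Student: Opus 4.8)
The plan is to observe that the linearly transformed partial sum is itself a Hilbert-valued martingale and then to apply the Burkholder inequality \eqref{eq:burkH} to it, controlling its square function by the right-hand side. First I would set $\tilde M_n := \sum_{k=1}^{n} A_k(D_k)$ and check that this is a $\G$-martingale with difference sequence $\tilde D_k = A_k(D_k)$. Since each $A_k \in S_{\infty}(H)$ is a fixed bounded linear operator and conditional expectation commutes with bounded linear operators on Bochner-integrable elements, we have $\E[A_k(D_k)\mid \G_{k-1}] = A_k\big(\E[D_k\mid \G_{k-1}]\big) = A_k(0) = 0$, so $\{\tilde D_k\}$ is indeed a martingale difference sequence. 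Its square function is $V(\tilde M) = \big(\sum_{k=1}^{n}\|A_k(D_k)\|_H^2\big)^{1/2}$, and the upper bound in \eqref{eq:burkH} gives
\[
\|\tilde M_n\|_{\hi,p} \le (p^\star-1)\,\big(\E|V(\tilde M)|^p\big)^{1/p}.
\]

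It then remains to bound $\big(\E|V(\tilde M)|^p\big)^{1/p}$ by $\big(\sum_{k}\snorm{A_k}_\infty^q\|D_k\|_{\hi,p}^q\big)^{1/q}$, using throughout the submultiplicative pointwise estimate $\|A_k(D_k)\|_H \le \snorm{A_k}_\infty\,\|D_k\|_H$. Here I would split into two regimes according to the exponent $p/2$. For $p \ge 2$, so $q=2$, the exponent $p/2 \ge 1$ and Minkowski's inequality in $L^{p/2}(\Omega)$ applied to the nonnegative variables $\|A_k(D_k)\|_H^2$ yields
\[
\Big(\E\big(\textstyle\sum_{k}\|A_k(D_k)\|_H^2\big)^{p/2}\Big)^{2/p} \le \sum_{k}\big(\E\|A_k(D_k)\|_H^p\big)^{2/p} \le \sum_{k}\snorm{A_k}_\infty^2\,\|D_k\|_{\hi,p}^2,
\]
and feeding this back into the Burkholder bound (raised to the power $2$) gives the claim with $K_p^2 = (p^\star-1)^2$. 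For $1 < p < 2$, so $q=p$, the exponent $p/2 < 1$ and Minkowski fails; the correct replacement is the elementary subadditivity $\big(\sum_k a_k\big)^{p/2} \le \sum_k a_k^{p/2}$ for nonnegative $a_k$, valid since $t \mapsto t^{p/2}$ has exponent below one. Taking $a_k = \|A_k(D_k)\|_H^2$ and then expectations gives
\[
\E|V(\tilde M)|^p \le \sum_{k}\E\|A_k(D_k)\|_H^p \le \sum_{k}\snorm{A_k}_\infty^p\,\|D_k\|_{\hi,p}^p,
\]
and combining with the Burkholder bound raised to the power $p$ yields the claim with $K_p^p = (p^\star-1)^p$. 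Together the two cases give the stated inequality with $q=\min(2,p)$ and $K_p^q = (p^\star-1)^q$.

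The only genuine subtlety, and the step I expect to require the most care, is the regime split at $p=2$: one must not apply the triangle inequality in $L^{p/2}$ when $p<2$, and the concavity-based subadditivity of $t\mapsto t^{p/2}$ is the precise tool that produces the correct power $q=p$ there. Everything else, namely the martingale property of $\tilde M$, the commutation of conditional expectation with the bounded operators $A_k$, and the submultiplicative operator-norm bound, is routine, so the argument reduces to invoking \eqref{eq:burkH} and tracking the exponents through the two cases.
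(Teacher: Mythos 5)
Your proposal is correct and follows essentially the same route as the paper's proof: apply the Hilbert-valued Burkholder inequality \eqref{eq:burkH} to the transformed martingale $\sum_k A_k(D_k)$, bound the square function via $\|A_k(D_k)\|_H \le \snorm{A_k}_\infty \|D_k\|_H$, and split at $p=2$, using Minkowski in $L^{p/2}$ (with H\"older) for $p\ge 2$ and the subadditivity $(\sum_k a_k)^{p/2} \le \sum_k a_k^{p/2}$ for $1<p<2$ — exactly the paper's two cases. The only difference is cosmetic: you explicitly verify that $\{A_k(D_k)\}$ is a martingale difference sequence (conditional expectation commuting with bounded operators), which the paper leaves implicit.
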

\begin{proof}[Proof of \autoref{lem:Burkh}]
By Burkholder's inequality \eqref{eq:burkH}
\begin{align*}
\Bignorm{\sum_{k=1}^{n}A_k (D_k)}^q_{\hi,p} =\Big(\E\Bignorm{\sum_{k=1}^{n} A_k (D_k)}^p_H\Big)^{q/p}  \le 
(p^\star-1)^q \Big(\E  \Big| \Big(\sum_{k=1}^{n}\|A_k (D_k)\|^2_H\Big)^{1/2}\Big|^p\Big)^{q/p}.
\end{align*}
Let $p<2$. Then, applying the inequality $|\sum_k x_k|^{r} \le \sum_k |x_k|^r$ for $r<1$ to $x_k = \|A_k(D_k)\|^2_{H}$, we obtain
\begin{align*}
(p^\star-1)^q \Big(\E \Big| \Big(\sum_{k=1}^{n}\|A_k (D_k)\|^2_H\Big)^{1/2}\Big|^{p}\Big)^{q/p} 
& \le (p^\star-1)^q \Big( \E\sum_{k=1}^{n}  \|A_k (D_k)\|^{p}_H\Big)^{q/p}  
\\& \le (p^\star-1)^q \Big( \sum_{k=1}^{n}  \snorm{A_k}_{\infty}^p \E\| D_k\|^{p}_H\Big)^{q/p} 
\\&  \le (p^\star-1)^q \sum_{k=1}^{n}  \snorm{A_k}^q _{\infty}\|D_k\|^q_{\hi,p},
\end{align*}
where the one before last inequality follows from Holder's inequality for operators and from the fact that $p=q$. 
For $p\ge2$, $q=2$. Therefore, an application of Minkowski's inequality to $\|\cdot\|_{\mathbb{C},{p/q}}$ and Holder's inequality yield in this case
\begin{align*}
(p^\star-1)^q \Big(\E \Big| \Big(\sum_{k=1}^{n}\|A_k (D_k)\|^q_H\Big)^{1/q}\Big|^p\Big)^{q/p} 
& \le 
(p^\star-1)^q \Big(\E \Big\vert\sum_{k=1}^{n}\|A_k (D_k)\|^q_H\Big\vert^{p/q}\Big)^{q/p} 
\\& \le (p^\star-1)^q \sum_{k=1}^{n} \snorm{A_k}^q _{\infty}( \E\|D_k\|^p_{H})^{q/p}
\\&  \le (p^\star-1)^q\sum_{k=1}^{n}  \snorm{A_k}^q _{\infty}\|D_k\|^q_{\hi,p}.
\end{align*}
\end{proof}

\begin{lemma} \label{lem:lineq}
For  $t=1,\ldots, n$, let $\{X_t\}$ be a zero-mean stationary ergodic process in $\op^{p}_H$ and $\{A_t\} \in S_\infty(H)$. Then, 
\begin{align*}
\mathrm{(i)}\, &\bignorm{ \sum_{t=1}^{n} A_t (X_t)}^q_{\hi,p} \le K^q_{p}\snorm{A_n}^q_{\ell_q}\boldsymbol{\Delta}^q_{p,1,0}%
,\quad \mathrm{(ii)}\, \bignorm{ \sum_{t=1}^{n} A_t X^{(m)}_t}^q_{\hi,p} \le  K^q_{p}\snorm{A_n}^q_{\ell_q}%
\boldsymbol{\Delta}^q_{p,1,0} ,\\
\mathrm{(iii)} &\,\bignorm{ \sum_{t=1}^{n} A_t (X_t-X^{(m)}_t)}^q_{\hi,p} \le  K^q_{p}\snorm{A_n}^q_{\ell_q}
\boldsymbol{\Delta}^q_{p,1,m+1}. 
\end{align*}
where $\boldsymbol{\Delta}_{p,q^\prime,m} =\sum^{\infty}_{j=m}  \nu^{q^\prime}_{\hi,p,j}$ and $\snorm{A_n}^q_{\ell_q}=\sum_{t=1}^{n} \snorm{A_t}_{\infty}^q $.
\end{lemma}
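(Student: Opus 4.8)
The common device for all three parts is to expand the statistic along the lag of the projection operators and to recognise each lag-slice as a linear transform of a martingale difference sequence, to which \autoref{lem:Burkh} applies. For (i), insert the representation $X_t=\sum_{\ell\ge0}P_{t-\ell}(X_t)$ from \eqref{eq:ProjX} and interchange the finite sum over $t$ with the sum over $\ell$:
\[
\sum_{t=1}^{n}A_t(X_t)=\sum_{\ell\ge0}Z_\ell,\qquad Z_\ell:=\sum_{t=1}^{n}A_t\big(P_{t-\ell}(X_t)\big).
\]
For each fixed $\ell$, the sequence $\{P_{t-\ell}(X_t)\}_{t}$ is a martingale difference sequence for the filtration $\{\G_{t-\ell}\}_{t}$, since $P_{t-\ell}(X_t)$ is $\G_{t-\ell}$-measurable with $\E[P_{t-\ell}(X_t)\mid\G_{t-\ell-1}]=O_H$. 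Thus $Z_\ell$ is a linear transform of a martingale difference sequence.

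Applying \autoref{lem:Burkh}, followed by stationarity together with the bound $\norm{P_0(X_\ell)}_{\hi,p}\le\nu_{\hi,p}(X_\ell)$ recorded after \autoref{as:depstruc}, gives
\[
\norm{Z_\ell}_{\hi,p}^{q}\le K_p^{q}\sum_{t=1}^{n}\snorm{A_t}_{\infty}^{q}\,\norm{P_{t-\ell}(X_t)}_{\hi,p}^{q}\le K_p^{q}\,\snorm{A_n}_{\ell_q}^{q}\,\nu_{\hi,p}(X_\ell)^{q}.
\]
Summing over $\ell$ by Minkowski's inequality in $\op^{p}_H$ --- the series converging, and the interchange above being justified, because $\sum_{\ell\ge0}\nu_{\hi,p}(X_\ell)<\infty$ under \autoref{as:depstruc} --- yields $\norm{\sum_{t}A_t(X_t)}_{\hi,p}\le K_p\snorm{A_n}_{\ell_q}\sum_{\ell\ge0}\nu_{\hi,p}(X_\ell)$, and raising to the power $q$ gives (i).

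For (ii) the identical decomposition is applied to the stationary ergodic sequence $\{X_t^{(m)}\}$. Here $X_t^{(m)}=\E[X_t\mid\sigma(\epsilon_t,\dots,\epsilon_{t-m})]$ is $\G_{t-m}$-measurable and independent of $\G_{t-m-1}$, so $P_{t-\ell}(X_t^{(m)})=O_H$ whenever $\ell>m$; only the slices $0\le\ell\le m$ survive. Since the finite-window conditional expectation is a contraction, $\norm{P_0(X_\ell^{(m)})}_{\hi,p}\le\nu_{\hi,p}(X_\ell^{(m)})\le\nu_{\hi,p}(X_\ell)$, and summing the surviving slices reproduces the bound of (i), hence $\boldsymbol\Delta_{p,q,0}$ again.

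For (iii) I would run the same scheme on the residual $D_t:=X_t-X_t^{(m)}$, again stationary and ergodic. Subtracting the two decompositions, the lags $\ell>m$ contribute exactly $\sum_{\ell>m}\sum_{t}A_t\big(P_{t-\ell}(X_t)\big)$, which is estimated as in (i) and produces the tail $\sum_{\ell>m}\nu_{\hi,p}(X_\ell)$, i.e.\ $\boldsymbol\Delta_{p,q,m+1}$. The main obstacle is the boundary slices $0\le\ell\le m$: one must show that $\sum_{\ell}\norm{P_0(X_\ell-X_\ell^{(m)})}_{\hi,p}$ is controlled by the tail $\sum_{j\ge m+1}\nu_{\hi,p}(X_j)$ rather than by the full sum. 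This is not a termwise statement --- for nonlinear $g$ the residual $D_\ell$ can retain genuine dependence on $\epsilon_0$ even for $\ell\le m$ --- and I expect it to require the coupling characterisation of $\nu_{\hi,p}$ combined with the contraction property of $\E[\,\cdot\mid\sigma(\epsilon_t,\dots,\epsilon_{t-m})]$, trading the short-lag dependence of $D_\ell$ against the long-range physical dependence of $X$. Once this tail estimate is in place, \autoref{lem:Burkh} and Minkowski's inequality close the argument.
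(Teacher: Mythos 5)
Your treatment of parts (i) and (ii) is sound and is essentially the paper's own argument: expand along the projections \eqref{eq:ProjX}, observe that each fixed-lag slice $\{P_{t-\ell}(X_t)\}_t$ is a martingale difference sequence, apply \autoref{lem:Burkh} slice-wise, bound the projection norms by the physical dependence coefficients, and sum with Minkowski (note only that this route delivers the constant $\big(\sum_{j\ge0}\nu_{\hi,p}(X_j)\big)^q$ rather than the literal $\sum_{j\ge0}\nu^q_{\hi,p}(X_j)$; this looser form is in any case what the paper uses downstream, cf.\ the step $\boldsymbol{\Delta}_{2p,2,m+1}\le\boldsymbol{\Delta}^2_{2p,1,m+1}$ in the proof of \autoref{lem:mdapprox}). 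For part (iii), however, you have correctly diagnosed but not closed the decisive gap, and as written the proposal does not prove (iii). The boundary slices $0\le\ell\le m$ of $X_t-X^{(m)}_t$ do not vanish, and the best generic per-slice estimate is $\bignorm{P_{t-\ell}(X_t-X^{(m)}_t)}_{\hi,p}\le 2\,\|X_t-X^{(m)}_t\|_{\hi,p}\le 2\sum_{j\ge m+1}\nu_{\hi,p}(X_j)$, i.e.\ each boundary slice already costs the \emph{entire} tail sum; summing the $m+1$ such slices by Minkowski then inflates the bound by a factor of order $m$, which destroys the claimed $\boldsymbol{\Delta}_{p,q,m+1}$ estimate — and this estimate must hold uniformly in $m$ to be of any use, since \autoref{lem:mdapprox} sends $m\to\infty$. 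You anticipate that some coupling/contraction trade is needed, but you do not supply it, and within the lag-slice decomposition it is not clear one exists.

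The missing idea — and the paper's actual proof — is to telescope in the window-size direction \emph{before} any slicing in time. With $\G_{t,t-j}:=\sigma(\epsilon_t,\ldots,\epsilon_{t-j})$, write $X_t-X^{(m)}_t=\sum_{j=m+1}^{\infty}D_{t,j}$ where $D_{t,j}:=\E[X_t|\G_{t,t-j}]-\E[X_t|\G_{t,t-j+1}]$. Since $\epsilon^{\prime}_{t-j}$ is independent of $\G_{t,t-j}$, one has $\E[X_{t,\{t-j\}}|\G_{t,t-j}]=\E[X_t|\G_{t,t-j+1}]$, hence $D_{t,j}=\E\big[X_t-X_{t,\{t-j\}}\,\big|\,\G_{t,t-j}\big]$, and the contraction property together with stationarity give $\|D_{t,j}\|_{\hi,p}\le\nu_{\hi,p}(X_j)$: every increment carries a \emph{single} tail coefficient with index $j\ge m+1$, so the boundary problem never arises. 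Moreover, for fixed $j$ the family $\{D_{t,j}\}_{t=n,\ldots,1}$ is a martingale difference sequence with respect to the backward filtration $\sigma(\epsilon_s\colon s\ge i)$, $i=t-j$ (each $D_{t,j}$ is measurable for it, and $\E[D_{t,j}|\sigma(\epsilon_s\colon s\ge t-j+1)]=0$ by the tower property), so \autoref{lem:Burkh} applies to $\sum_{t=1}^{n}A_t(D_{t,j})$ for each $j$ separately, and Minkowski's inequality over $j\ge m+1$ yields (iii) with only tail coefficients and no $m$-dependent loss. In short: your scheme slices first and truncates second, which traps the short-lag dependence of the residual; the paper truncates first (in $j$) and slices second (in $t$), which converts every term into a tail coefficient before Burkholder is invoked.
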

\begin{proof}
Using \eqref{eq:ProjX} and \autoref{lem:Burkh} $\mathrm{(i)}$ directly follows. For $\mathrm{(ii)}$, by stationarity
\[
\|P_j(X^{(m)}_t)\|_{\hi,p} 
= \|\E[X_{t-j}-X_{t-j,\{0\}}|\G_{t,t-m}]\|_{\hi,p} \le \nu_{\hi,p}(X_{t-j})
\]
where we abbreviated $X_{t-j,\{0\}}=E[X_{t-j}| \G_{t-j,\{0\}}]$ and therefore $\mathrm{(ii)}$ follows from $\mathrm{(i)}$. Finally, if we write $X_t-X_t^{(m)} = \sum_{j=1+m}^{\infty}\E[X_t|\G_{t,t-j}]-\E[X_t|\G_{t,t-j+1}] 
$ then $D_{t,j}:=\E[X_t|\G_{t,t-j}]-\E[X_t|\G_{t,t-j+1}]$ for $t=n,\ldots, 1$ defines a martingale difference with respect to the backward filtration $\G(\epsilon_t,\ldots, \epsilon_i)$, $i=0,-1,\ldots$. $\mathrm{(iii)}$ now follows from noting by the contraction property and stationarity
\begin{align}
\|D_{t,j}\|_{\hi,p} = \|\E[ (X_t-X_{t,\{t-j\}})|\G_{t,t-j}] \|_{\hi,p} & \le  \| (X_t-X_{t,\{t-j\}}) \|_{\hi,p} 
\\&=  \| (X_j-X_{j,\{0\}}) \|_{\hi,p} = \nu_{\hi,p}(X_j). \label{eq:Xcoup}
\end{align}
\end{proof}

\begin{proof}[Proof of Proposition \autoref{prop:propertiesDM}]
(i) Since the process $\{X^{(m)}_{t}\}$ is $m$-dependent it is immediate that the $D_{m,k}$ are also $m$-dependent. Hence, we may write $D^{\lambda}_{m,0}=\sum_{t=0}^{m} P_{0}(X^{(m)}_t) e^{-\im \lambda t}$. By orthogonality, $\E\|D_{m,k}\|^2_H \le \sum_{t=0}^{\infty} \E\|P_{0}(X_t) \|^2_H < \infty$. Next, observe that \[\E [D^{(\lambda)}_{m,k}|\G_{k-1}]=  \frac{1}{\sqrt{2\pi}} \sum_{t=0}^{\infty} \E \big[\E[X^{(m)}_{t+k}|\G_k] -\E[X^{(m)}_{t+k}|\G_{k-1} ]|\G_{k-1}\big]e^{-\im t \lambda}= 0\]
by the properties of the conditional expectation.\\
 (ii) Under \autoref{as:depstruc} with $p=4$, we obtain from \autoref{lem:Burkh}
\begin{align*}
\E \snorm{D^{(\lambda)}_{m,k} \otimes D^{(\lambda)}_{m,k}}^2_2 = \E \|D^{(\lambda)}_{m,k} \|^4_H & \le\big( \sum_{t=0}^{\infty}\|P_0(X^{(m)}_t)\|^2_{\hi,4})^2 
\\& \le \big( \sum_{t=0}^{\infty}\|P_0(X_t)\|^2_{\hi,4})^2 \le \big( \sum_{t=0}^{\infty} \nu^2_{\hi,4}(X_t)\big)^2<\infty.
\end{align*}
Secondly, observe that for all $n_1,n_2 \in \mathbb{N}$ such that $n_2 \ge n_1$, we have using \autoref{lem:Burkh}
\begin{align*}
\E \|D^{(\lambda)}_{m,k,n_2} -D^{(\lambda)}_{m,k,n_2}\|^4_H  =\big( (\E \|D^{(\lambda)}_{m,k,n_2} -D^{(\lambda)}_{m,k,n_1}\|^4_H)^{1/2} \big)^2 = \big( \|D^{(\lambda)}_{m,k,n_2} -D^{(\lambda)}_{m,k,n_1}\|^2_{\hi,4} \big)^2
\le (\sum_{t=n_1+1}^{n_2}\|P_0(X_t)\|^2_{\hi,4})^2
\end{align*}
from which it is clear that $\{D^{(\lambda)}_{\infty,k,T}\}_{\{T \ge 1\}}$ is Cauchy in $\op^4_H$. Trivially, $\{D^{(\lambda)}_{m,k,T}\}_{\{T \ge 1\}}$ is therefore Cauchy in $\op^4_H$, uniformly in $m$. To ease notation, let $Y_{n}:=D^{(\lambda)}_{m,k,n}$. Now observe that for all $n_1,n_2\in \mathbb{N}$
\begin{align*}
\E \snorm{Y_{n_2} \otimes Y_{n_2} -Y_{n_1} \otimes Y_{n_1}}^2_2 & \le  2\E \snorm{ (Y_{n_2}-Y_{n_1}) \otimes Y_{n_2}}^2_2 +2 \E\snorm{Y_{n_1}  \otimes  (Y_{n_2}-Y_{n_1})}^2_2
\\& \le 
 2\E \| (Y_{n_2}-Y_{n_1})\|^2_H \|Y_{n_2}\|_H^2 +2\E \|Y_{n_1}\|_H^2 \| (Y_{n_2}-Y_{n_1})\|^2_H
 \\& \le 
 2 (\E \| (Y_{n_2}-Y_{n_1})\|^4_H \E\|Y_{n_2}\|_H^4)^{1/2} +2 (\E \|Y_{n_1}\|_H^4 \E\ (Y_{n_2}-Y_{n_1})\|^4_H)^{1/2}
\\& \le 4 (\epsilon N)^{1/2},  
\end{align*}
where we used that $\{Y_{n}\}$ is Cauchy in $\op^4_H$ from which it follows that for all $\epsilon>0$ there exists an $N$ such that for $n_1,n_2\ge N$, $\E \|(Y_{n_2} -Y_{n_1})\|^4_H < \epsilon$ and $\E\|Y_{n}\|^4_H <N$. 
Next we prove (iii). 
First we need to prove that
\[
\lim_{m \to \infty}\lim_{T \to \infty}\Tr\big(\var(D^{(\lambda)}_{m,0,T})\big)  =\Tr( \F^{(\lambda)})< \infty. \tageq \label{eq:firstthing}\]
Recall that $\Tr\big(\var(D^{(\lambda)}_{m,0,T})\big) =\E\|D_{m,0,T}^{(\lambda)}\|_H^2$ where the latter is finite uniformly in $m$ and $T$ because the limit satisfies $\E\|D^{(\lambda)}_0\|^2_H <\infty$ by property (ii). We shall therefore proceed similar to \citep{PeWu10,CevHor17}.
By stationarity and by the fact that the integral of the complex exponential yields the constraint $t-s=h$
\begin{align*}
\int_{-\pi}^{\pi} \E\|D_{m,0,T}^{(\omega)}\|^2 e^{\im h \omega} d\omega
&=\frac{1}{2\pi} \int_{-\pi}^{\pi} \E \inprod{\sum_{t=0}^{T} P_{0}(X^{(m)}_{t})}{\sum_{s=0}^{T} P_{0}(X^{(m)}_{s})} e^{-\im (t-s-h) \omega} d\omega
\\& =\E \sum_{t=h}^{T}\inprod{ P_{0}(X^{(m)}_{t})}{P_{0}(X^{(m)}_{t-h})}.
\end{align*}
Since $\G_{-t} \subseteq \G_{-h} \forall t \ge h$, the properties of the conditional expectation show that, for any $m \ge 1$, we have $\E\big[\E [ X^{(m)}_0 |\G_{-h}]| \G_{-t}\big] \overset{\op^2_H}{=} \E[X^{(m)}_0| \G_{-t}], \forall t \ge h$. Morevover,  $X^{(m)}_{-h}$ is $\G_{-h}$-measurable. Therefore, we obtain by orthogonality of the projection operators and stationarity that
\begin{align*}
\E \sum_{t=h}^{T}\inprod{ P_{0}(X^{(m)}_{t})}{P_{0}(X^{(m)}_{t-h})}
& =\E \inprod{\sum_{t=h}^{T} P_{0}(X^{(m)}_{t})}{\sum_{s=h}^{T}P_{0}(X^{(m)}_{s-h})} 
\\&=\E \inprod{\sum_{t=h}^{T} P_{-t}(\E [X^{(m)}_{0}|\G_{-h}])}{\sum_{s=h}^{T}P_{-s}(\E[X^{(m)}_{-h}|\G_{-h}])}.
\end{align*}
By ergodicity and from (ii) $\{D^{(\lambda)}_{m,0,T}\}_{\{T \ge 1\}}$ is Cauchy in $\op^2_H$. Thus, $\lim_{T \to \infty} \sum_{t=h}^{T} P_{-t}(\E [X^{(m)}_{0}|\G_{-h}]) \overset{\op^2_H}{=}  \E [X^{(m)}_{0}|\G_{-h}]$ and $\lim_{T \to \infty} \sum_{s=h}^{T}P_{-s}\E[X^{(m)}_{-h}|\G_{-h}] \overset{\op^2_H}{=}X^{(m)}_{-h}$. Therefore, continuity of the inner product yields
\begin{align*}
&\lim_{T \to \infty}\E \inprod{\sum_{t=h}^{T} P_{-t}(\E [X^{(m)}_{0}|\G_{-h}])}{\sum_{s=h}^{T}P_{-s}(\E[X^{(m)}_{-h}|\G_{-h}])}
\\& =\E \inprod{\E [X^{(m)}_{0}|\G_{-h}]}{X^{(m)}_{-h}} =  \E \inprod{ X^{(m)}_0}{X^{(m)}_{-h}} = \Tr(C^{m}_{h}),
\end{align*}
where we used the tower property.  Hence, $\lim_{T \to \infty}\frac{1}{2\pi} \int_{-\pi}^{\pi} \E\|D_{m,0,T}^{(\lambda)}\|_H^2 e^{\im h \lambda} d\lambda= \Tr(C^{(m)}_{h})$. But this holds in particular for $m=\infty$, i.e., for the process $\lim_{m \to \infty}X^{m}_{t} =X_t$.  Now observe that the conditions of the  classical F{\'e}jer-Lebesgue theorem are satisfied and therefore
\begin{align*}
\lim_T \Tr(\var(\mathcal{D}^{\lambda}_{T}) )& = \lim_T \sum_{h \le T} (1-\frac{h}{T})  \E \inprod{ X_h}{X_{0}}e^{-\im h \omega}=\E\|D_{0}^{(\lambda)}\|_H^2 =\Tr(\F^{(\lambda)}) < \infty, \tageq \label{eq:trFlim}\end{align*}
where we used again property (ii) in order to obtain the finite trace. Let $\mathcal{D}^{\omega}_{m,T} $ denotes the functional DFT of $X^{(m)}_t$. Clearly, we have immediately from the above as well that 
\[ \lim_{m \to \infty} \lim_{T \to \infty} \Tr\big(\var(\mathcal{D}^{\omega}_{m,T})\big) = \lim_{m \to \infty} \E\|D_{m,0}^{(\lambda)}\|_H^2=\lim_{m \to \infty}\Tr(\F^{(\lambda)}_m)=\Tr(\F^{(\lambda)}) \tageq \label{eq:trFlim2}\] 
where $2\pi \F^{(\lambda)}_{m} = \sum_{|h| \le m}  \E (X^{(m)}_h \otimes X^{(m)}_{0}) e^{-\im h \lambda}$ and where we applied the dominated convergence theorem which is justified by \eqref{eq:trFlim}. This proves \eqref{eq:firstthing}. Consequently, non-negative definiteness allows us to conclude that $\F^{(\lambda)}_m \in S_1(H)$ for all $m \ge 1$ and any  $\lambda \in (-\pi,\pi]$. Then, using the permutation operator is a unitary operator, Holders' inequality for operators yields
\[\bigsnorm{\Pi_{ijkl}\big(\F^{(\lambda)} {\otimes}{\F^{(\lambda)}})}_1 \le \snorm{\Pi_{ikjl}}_{\infty}\snorm{\F^{(\lambda)} \widetilde{\otimes}{\F^{(\lambda)}}}_1 \le \snorm{\F^{(\lambda)}}^2_{1} = (\E\|D^{(\lambda)}_0\|^2_H)^2 \le \E\|D^{(\lambda)}_0\|^4_H< \infty, \tageq \label{eq:PermTF}\]
where we applied \eqref{eq:trFlim} in the equality and Jensen's inequality together with property (ii) in the last inequality. From continuity of $\widetilde{\otimes}$, $\Pi$ and the dominated convergence theorem together with \eqref{eq:trFlim2}, we obtain 
\[
\lim_{m \to \infty} \lim_{T \to \infty} \Tr\big(\Pi_{ijkl}\var(D^{(\lambda)}_{m,0,T}) \widetilde{\otimes} \var(D^{(\lambda)}_{m,0,T})\big) =
\Tr(\Pi_{ijkl} \F^{(\lambda)} \widetilde{\otimes}\F^{(\lambda)})< \infty.\]
\end{proof}
\section{Joint convergence of finite-dimensional distributions of ${\xi}^{\lambda}_{T,m}$} \label{sec:Ap_fdds}

\begin{proof}[Proof of \autoref{thm:fddsmain}]
We recall that \[
{\xi}^{\lambda}_{T,m}:={(\snorm{\Phi_T}^2_{F})}^{-1/2} ({\ldm{}} +{\ldm{\dagger}}).
\]
We want to show that $\{{\xi}^{\lambda_1}_{T,m},\ldots, {\xi}^{\lambda_d}_{T,m} \}$ are converging jointly to complex Gaussian elements of $S_2(H)$. From Proposition \autoref{prop:Msq}, we know that ${\xi}^{\lambda_j}_{T,m}$ define martingales in $\op^2_{S_2(H)}(\Omega, \mathcal{A},\mathbb{P})$ with respect to the filtration $\{\G_T\}$. Below we shall prove convergence of the finite-dimensional distributions via a martingale central limit theorem on the linear combinations. To make this precise, let $U=\{u_1,\ldots, u_d, v_1,\ldots, v_d \in H\}$. For any $u, v \in H$ note that  we can define the natural filtration of the process $\{\inprod{X_t}{u}\}_t$ over $(\Omega, \mathcal{A},\mathbb{P})$ by $\{\G_t(u)\}$. In the following, we let $\{\G_t(u_j,v_j)\}=\sigma(\{\inprod{X_{t}}{u_j},\inprod{X_{s}}{v_j}\}_{t,s: t\ge s})$ to be the natural filtration over $(\Omega, \mathcal{A},\mathbb{P})$ of the projected process process $\{\inprod{X_t \otimes X_s}{u_j \otimes v_j}_{S}\}_{t,s: t\ge s}$. Correspondingly, we denote the projection operator $P^{(u_j,v_j)}_0 := \E[\cdot|\G_{0}(u_j,v_j)]- \E[\cdot|\G_{-1}(u_j,v_j)]$. More generally, let $\G_t(U)= \sigma(\inprod{X_{t_1}}{u_1},\ldots, \inprod{X_{t_d}}{u_d}, \ldots,\inprod{X_{t_{d-1}}}{v_1},$ $ \inprod{X_{t_{2d}}}{u})$  for all $t=t_1 \ge \ldots \ge t_{2d}$ and $P^{U}_0$ the corresponding projection operator. Observe then that
\begin{align*}
\frac{1}{\snorm{\Phi_T}_{F}}\Big( \inprod{\ldmi{}{j}(u_j)}{v_j} +\inprod{\ldmi{\dagger}{j}(u_j)}{v_j} \Big)
\end{align*}
defines a well-defined martingale process in $\op^2_\cnum(\Omega, \mathcal{A},\mathbb{P})$ with respect to the filtration $\{\G_T(u_j,v_j)\}$. 
In order to derive joint convergence of the finite-dimensional distributions it suffices to show that, for any $a_1,\ldots, a_d \in \rnum$ and $\lambda_i \pm \lambda_j \ne 0 \mod 2\pi$, the process
\begin{align*}
\frac{1}{\snorm{\Phi_T}_{F}}\sum_{j=1}^{d}a_j\Big( \inprod{\ldmi{}{j}(u_j)}{v_j} +\inprod{\ldmi{\dagger}{j}(u_j)}{v_j} \Big)
\end{align*}
converges to a zero-mean complex normal random variable with covariance 
\[\sum_{j=1}^{d}a_j\inprod{\Gamma_m (u_j)}{v_j}= { 4\pi^2} \sum_{j=1}^{d}a_j\Big(\inprod{\F^{(\lambda_j)}_m(v_j)}{v_j}  \inprod{u_j}{\F^{(\lambda_j)}_m(u_j)} +\mathrm{1}_{\{0,\pi\}}(\inprod{\F^{(\lambda_j)}_m(u_j)}{v_j}  \inprod{\F^{(\lambda_j)}_m(u_j)}{v}  ) \Big)\] and pseudocovariance 
\[ \sum_{j=1}^{d}a_j\inprod{\Sigma_m (u_j)}{v_j} = { 4\pi^2}\sum_{j=1}^{d} a_j\Big(\mathrm{1}_{\{0,\pi\}}\big(\inprod{\F^{(\lambda_j)}_m(u_j)}{u_j}\inprod{\F^{(\lambda_j)}_m(v_j)}{v_j} \big)+ \inprod{\F^{(\lambda_j)}_m(u_j)}{v_j}  \inprod{\F^{(\lambda_j)}_m(u_j)}{v_j}  \Big).\]
Note that this process is adapted to the filtration $\G_{T}(U)$. We shall do this by means of the Cram{\'e}r-Wold device. We first decompose the functional processes $\ldmi{}{j}$ as  
\begin{align*}
\ldm{} = \sum_{t=2}^{T}  \dm{t} \otimes \Big(\sum_{s=1}^{t-4m} \phi^{(\lambda)}_{T,s-t} \dm{s}+\sum_{s=t-4m+1 \vee 1}^{t-1} \phi^{(\lambda)}_{T,s-t} \dm{s} \Big).
\end{align*}
The following lemma shows the second sum is of lower order in norm. 
\begin{lemma}\label{lem:secM}
Under the conditions of \autoref{thm:funcdist}
\begin{align*}
&\bignorm{\sum_{t=2}^{T} \dm{t}  \otimes \big(\sum_{s=t-4m+1 \vee 1}^{t-1} \phi^{(\lambda)}_{T,s-t} \dm{s}\big)}_{S_2,2} = o(\snorm{\Phi_T}_{F}). 
\end{align*}
\end{lemma}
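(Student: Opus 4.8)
The plan is to isolate the near-diagonal block as a martingale in $S_2(H)$ and bound its second moment by the orthogonality of its increments, paying a crude triangle-inequality cost only on the inner block of fixed length $O(m)$. Write $R_T$ for the quantity inside the norm,
\[
R_T = \sum_{t=2}^{T}\dm{t}\otimes W_t,\qquad W_t := \sum_{s=(t-4m+1)\vee 1}^{t-1}\phi^{(\lambda)}_{T,s-t}\dm{s}.
\]
Since each $\dm{s}$ is $\G_s$-measurable by Proposition \autoref{prop:propertiesDM}(i), the inner sum $W_t$ is $\G_{t-1}$-measurable, while $\E[\dm{t}\mid\G_{t-1}]=O_H$. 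Exactly as in the proof of Proposition \autoref{prop:Msq}, this shows that $\{\sum_{t=2}^{T'}\dm{t}\otimes W_t\}_{T'\ge 2}$ is a martingale in $\op^2_{S_2(H)}$ with respect to $\{\G_{T'}\}$, so its increments $M_t:=\dm{t}\otimes W_t$ are orthogonal. Using the isometry $\snorm{f\otimes g}_2=\|f\|_H\|g\|_H$, this yields
\[
\E\snorm{R_T}_2^2=\sum_{t=2}^{T}\E\snorm{\dm{t}\otimes W_t}_2^2=\sum_{t=2}^{T}\E\big[\|\dm{t}\|_H^2\,\|W_t\|_H^2\big].
\]

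Next I would bound each summand by the Cauchy--Schwarz inequality in the expectation, which is precisely where the moment assumption $p=4$ is used, and then estimate $\|W_t\|_{\hi,4}$ by brute force since it has at most $4m-1$ terms:
\[
\E\big[\|\dm{t}\|_H^2\,\|W_t\|_H^2\big]\le \|\dm{0}\|_{\hi,4}^2\,\|W_t\|_{\hi,4}^2,\qquad \|W_t\|_{\hi,4}\le \|\dm{0}\|_{\hi,4}\sum_{s=(t-4m+1)\vee 1}^{t-1}\snorm{\phi^{(\lambda)}_{T,s-t}}_\infty,
\]
where I used stationarity of $\{\dm{s}\}$ from Proposition \autoref{prop:propertiesDM}(i) and Minkowski's inequality. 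A further Cauchy--Schwarz on the finite inner sum gives $\big(\sum_s\snorm{\phi^{(\lambda)}_{T,s-t}}_\infty\big)^2\le (4m-1)\sum_s\snorm{\phi^{(\lambda)}_{T,s-t}}_\infty^2$, so that summing over $t$,
\[
\E\snorm{R_T}_2^2\le (4m-1)\,\|\dm{0}\|_{\hi,4}^4\sum_{t=2}^{T}\sum_{s=(t-4m+1)\vee 1}^{t-1}\snorm{\phi^{(\lambda)}_{T,s-t}}_\infty^2.
\]

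Finally I would control the double weight sum. Setting $h=t-s\in\{1,\dots,4m-1\}$ and using the evenness $\snorm{\phi^{(\lambda)}_{T,s-t}}_\infty=\snorm{A_{T,-h}}_\infty=\snorm{A_{T,h}}_\infty=\snorm{\phi^{(\lambda)}_{T,h}}_\infty$ from \autoref{as:phi}, each fixed lag $h$ occurs for at most $T$ indices $t$, whence the double sum is at most $(4m-1)\,T\max_{1\le t\le T}\snorm{\phi^{(\lambda)}_{T,t}}_\infty^2$. For fixed $m$, \autoref{as:phi}(ii) gives $\max_{1\le t\le T}\snorm{\phi^{(\lambda)}_{T,t}}_\infty^2=o(\varrho_T^2)$ and \autoref{as:phi}(i) gives $T\varrho_T^2=O(\snorm{\Phi_T}_F^2)$, so $\E\snorm{R_T}_2^2=o(\snorm{\Phi_T}_F^2)$ and the claim follows on taking square roots. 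The main obstacle is the middle step: the inner sum $W_t$ is not a martingale-difference sequence in $s$ (the terms with $t-s\le m$ remain dependent on $\dm{t}$), so one cannot exploit orthogonality there and must absorb a crude $L^4$ bound on $\|W_t\|_{\hi,4}$. This is affordable only because the decomposition isolates a block of fixed length $O(m)$; it is the boundedness of the number of terms, together with \autoref{as:phi}(i)--(ii), that renders the near-diagonal contribution genuinely lower order, and the fourth-moment assumption enters exactly through the Cauchy--Schwarz split of $\E[\|\dm{t}\|_H^2\|W_t\|_H^2]$.
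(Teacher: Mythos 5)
Your proof is correct and is in essence the paper's own argument: both exploit that the increments $\dm{t}\otimes W_t$ form a martingale difference sequence in $t$ with respect to $\{\G_t\}$ (so second moments add), reduce everything to the near-diagonal weight sum $\sum_{t=2}^{T}\sum_{s=(t-4m+1)\vee 1}^{t-1}\snorm{\phi^{(\lambda)}_{T,s-t}}_{\infty}^2$, and conclude via \autoref{as:phi}(i)--(ii) using that $m$ is fixed. The one place you deviate is the bound on the inner block: the paper applies the Burkholder-type \autoref{lem:Burkh} to $W_t$, using that $\{\dm{s}\}_s$ is itself a stationary martingale difference sequence in $s$ (Proposition \autoref{prop:propertiesDM}(i)), which gives $\|W_t\|^2_{\hi,4}\le K^2_4\,\|\dm{0}\|^2_{\hi,4}\sum_{s}\snorm{\phi^{(\lambda)}_{T,s-t}}^2_{\infty}$ directly, whereas you use Minkowski plus Cauchy--Schwarz on the at most $4m-1$ terms, paying an extra factor $4m-1$; this is harmless since the limit is taken in $T$ at fixed $m$, and it makes your route marginally more elementary (no Burkholder needed for this lemma). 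However, your closing justification for the crude bound is mistaken: you assert that orthogonality in $s$ ``cannot be exploited'' because the terms of $W_t$ with $t-s\le m$ are dependent on $\dm{t}$. The martingale-difference property of $\{\dm{s}\}_s$ holds unconditionally by Proposition \autoref{prop:propertiesDM}(i) and concerns the inner sum alone; the dependence between $\dm{t}$ and $W_t$ only obstructs factorizing $\E\big[\|\dm{t}\|_H^2\|W_t\|_H^2\big]$, which your Cauchy--Schwarz split already handles. So orthogonality in $s$ (via \autoref{lem:Burkh}) is available here --- the paper uses exactly this --- it is merely unnecessary for your version because the block length is $O(m)$.
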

This implies in turn that we can focus on the distributional properties of the projections of the operators
\begin{align}
\sum_{t=4m+1}^{T}  \dm{t} \otimes N^{(\lambda)}_{m,t} \quad \text{ and } \quad \Big(\sum_{t=4m+1}^{T}  \dm{t} \otimes N^{(\lambda)}_{m,t}\Big)^{\dagger}, \label{eq:CLTterm}
\end{align}
where 
\begin{align}N^{(\lambda)}_{m,t}:=\sum_{s=1}^{t-4m} \phi^{(\lambda)}_{T,s-t} \dm{s}. \label{eq:N4m}
\end{align} 
From Proposition \autoref{prop:Msq}, it is immediate that both terms in \eqref{eq:CLTterm} constitute well-defined martingales in $\op^2_{S_2(H)}(\Omega,\mathcal{A},\G_{T},\mathbb{P})$. Consequently, projecting these on fixed $u,v \in H$, we obtain the following two martingale processes with paths in $\cnum$
\begin{align}
\biginprod{\sum_{t=4m+1}^{T}  \dm{t} \otimes N^{(\lambda)}_{m,t}}{v \otimes u}_S &= 
\sum_{t=4m+1}^{T} \inprod{ \dm{t}}{v} \overline{\inprod{N^{(\lambda)}_{m,t}}{u}}, \label{eq:CLTpr1}
\\ \biginprod{ \Big(\sum_{t=4m+1}^{T}  \dm{t} \otimes N^{(\lambda)}_{m,t}\Big)^{\dagger}}{v \otimes u}_S &= 
\sum_{t=4m+1}^{T} \overline{\inprod{ \dm{t}}{u}} \inprod{N^{(\lambda)}_{m,t}}{v}.  \label{eq:CLTpr2}
\end{align}
In order to apply a martingale central limit theorem on the sum of \eqref{eq:CLTpr1} and \eqref{eq:CLTpr2} and over $j=1,\ldots, d$, we must verify Lindeberg's condition is satisfied. Without loss of generality we do this for  \eqref{eq:CLTpr1} and for fixed $u,v$ as the result is immediate to carry over to a finite sum over $j$. To ease notation in the following, we set $\inprod{ \dm{t}}{x}:= \dm{t}(x)$ and $\inprod{ N^{(\lambda)}_{m,t}}{x}:=N^{(\lambda)}_t(x)$ for any $x \in H$. Recall the inequality $\E\{\|Y\|^2_{H} {\mathrm{1}_{\|Y\|_{H}}>\epsilon}\} \le \frac{1}{\epsilon^2} \E\|Y\|^4_{H}$ which holds for any $Y \in \op^4_H$. Hence applying this for $H=\cnum$, we have 
\begin{align*}
\sum_{t=1+4m}^{T}\E\Big\{ \big|{\dm{t}}(v)\overline{{N^{(\lambda)}_t}(u)}\big|^2 \mathrm{1}_{|{\dm{t}}(v)\overline{{N^{(\lambda)}_t}(u)}|>\epsilon}\Big\} \le \frac{1}{\epsilon^2}\sum_{t=1+4m}^{T}\E |{\dm{t}}(v)\overline{{N^{(\lambda)}_t}(u)}|^4.
\end{align*}
Lindeberg's condition is therefore satisfied if we can show that the term on the right hand side is of order $o(\|\phi_T\|^4_{F})$. Since the $\{\dm{t}\}$ are $m$-dependent and by definition of \eqref{eq:N4m} $|t-s|\ge4m$, $\dm{t}$ and $N_{t}$ are independent. Therefore, using \autoref{lem:Burkh} with $H =\cnum$ yields
\begin{align*}
\sum_{t=1+4m}^{T}\E |{\dm{t}}(v)\overline{{N^{(\lambda)}_t}(u)}|^4 
& \le \|{D_0}\|^4_{{\hi},4}  \|v\|^4_{{H}}\sum_{t=1+4m}^{T} ( \|\overline{{N^{(\lambda)}_t}(u)}\|^2_{\cnum,4} )^2
\\& \le \|{D_0}\|^4_{{\hi},4} \|v\|^4_{{H}}\sum_{t=1+4m}^{T}  (K^2_4 \snorm{\Phi_T}^2_{\ell_2}\| \|u\|^2_{H} \|D_0\|^2_{\hi,4} )^2 
\\& = O(T  \varrho_T^4) = o(\snorm{\Phi_T}^4_{F}) \tageq \label{eq:DU4}
\end{align*}
and similarly for \eqref{eq:CLTpr2}, showing that Lindeberg's condition is satisfied. 
It therefore remains to verify the that the conditional variance satisfies
\begin{align*}
\frac{1}{\snorm{\Phi_T}^2_{F}} \sum_{t=1+4m}^{T} \E\Big(\Big\vert\sum_{j=1}^d a_j\big( {\dmi{t}{j}}(v_j)\overline{{N^{(\lambda_j)}_t}(u_j)}+\overline{{\dmi{t}{j}}(u_j)} {N^{(\lambda_j)}_t}(v_j)\big)\Big\vert^2 \Bigg\vert \G_{t-1}^{(U)}\Big) \overset{p}{\to} \sum_{j=1}^{d}a_j \Gamma^{\lambda_j}_m  \tageq \label{eq:Mar3}
\end{align*}
and that the conditional pseudocovariance satisfies
\begin{align*}
\frac{1}{\snorm{\Phi_T}^2_{F}} \sum_{t=1+4m}^{T} \E\Big(\Big(\sum_{j=1}^d a_j\big({\dmi{t}{j}}(v_j)\overline{{N^{(\lambda_j)}_t}(u_j)}+\overline{{\dmi{t}{j}}(u_j)} {N^{(\lambda_j)}_t}(v_j)\big)\Big)^2 \Bigg\vert \G_{t-1}^{(U)}\Big) \overset{p}{\to} \sum_{j=1}^{d}a_j \Sigma^{\lambda_j}_m   \tageq \label{eq:Mar2}.
\end{align*}
Moreover, observe that we can write $\E(\cdot|\G_{t-1}^{(U)}) = \sum_{k=1}^{m} P^{(U)}_{t-k}(\cdot)+\E(\cdot|\G_{t-m-1}^{(U)})$. We will show that the sum of projections are of lower order. For \eqref{eq:Mar2}, orthogonality of the $P^{(U)}_j(\cdot)$ and the contraction property of the expectation give
\begin{align*}
&\bignorm{\sum_{t=1+4m}^{T}  \sum_{k=1}^{m} P^{(U)}_{t-k}\Big( \big[\sum_{j=1}^d a_j\big({\dmi{t}{j}}{(v_j)}\overline{{N^{(\lambda_j)}_t}{(u_j)}}+\overline{{\dmi{t}{j}}{(u_j)}} {N^{(\lambda_j)}_t}{(v_j)}\big)\big]^2\Big)}^2_{\mathbb{C},2}
\\&
\le
\Big(\sum_{k=1}^{m}\bignorm{  \sum_{t=1+4m}^{T} P^{(U)}_{t-k}\Big( \big[\sum_{j=1}^d a_j\big({\dmi{t}{j}}{(v_j)}\overline{{N^{(\lambda_j)}_t}{(u_j)}}+\overline{{\dmi{t}{j}}{(u_j)}} {N^{(\lambda_j)}_t}{(v_j)}\big)\big]^2\Big)}_{\mathbb{C},2}\Big)^2
\\& \le
\Big(\sum_{k=1}^{m} \Big( \sum_{t=1+4m}^{T}\bignorm{\Big( \sum_{j=1}^d a_j\big({\dmi{t}{j}}{(v_j)}\overline{{N^{(\lambda_j)}_t}{(u_j)}}+\overline{{\dmi{t}{j}}{(u_j)}} {N^{(\lambda_j)}_t}{(v_j)}\big)\Big)}^4_{\mathbb{C},4} \Big)^{1/2}\Big)^2
    \\& = O\big(m^2 \sum_{t=1+4m}^{T}    \max_j  \|{\dmi{t}{j}}{(v_j)}\|^4_{\mathbb{C},4}\| {{N^{(\lambda_j)}_t}{(u_j)}}\|^4_{\mathbb{C},4} \big)
    \\&=o(\snorm{\Phi_T}^4_{F}),
\end{align*}
where we used again that $\dm{t}$ and $N^{(\lambda)}_t$ are independent for any $\lambda$ and where the order follows in a similar manner to \eqref{eq:DU4}. Furthermore, observe that, for any $x \in U$ and any $\lambda$, $N^{(\lambda)}_t(x)$ is $\G^{(U)}_{t-4m}$ and $\dm{t}(x)$ is $\G^{(U)}_{t,t-m}$ measurable. The left-hand side of \eqref{eq:Mar3} therefore equals
\begin{align*}
 &\frac{1}{\snorm{\Phi_T}^2_{F}}\sum_{t=1+4m}^{T} \E\Bigg(\Big\vert\sum_{j=1}^{d}a_j \big(\dmi{t}{j} (v_j)\overline{{N^{(\lambda_j)}_t}(u_j)}+\overline{{\dmi{t}{j}}(u_j)} {N^{(\lambda_j)}_t}(v_j)\big)\Big\vert^2 \big\vert\G^{(U)}_{t-m-1}\Bigg) +o_p(1)
\\&= \frac{1}{\snorm{\Phi_T}^2_{F}}\sum_{t=1+4m}^{T} \sum_{j=1}^d a^2_j \Big( |\overline{{N^{(\lambda_j)}_t}{(u_j)}}|^2\E\big|{\dmi{t}{j}}{(v_j)}|^2 +|{N^{(\lambda_j)}_t}{(v_j)}|^2 \E|\overline{{\dmi{t}{j}}{(u_j)}}|^2  
\\&\phantom{\frac{1}{\snorm{\Phi_T}^2_{F}}\sum_{t=1+4m}^{T}} + \overline{{N^{(\lambda_j)}_t}{(u_j)}} \overline{{N^{(\lambda_j)}_t}{(v_j)}} \E {\dmi{t}{j}}{(v_j)} {{\dmi{t}{j}}{(u_j)}}
+{N^{(\lambda_j)}_t}{(v_j)}{{N^{(\lambda_j)}_t}{(u_j)}} \E \overline{{\dmi{t}{j}}{(u_j)}}  \overline{{\dmi{t}{j}}{(v_j)}}\Big)
\\&  
 \phantom{\frac{1}{\snorm{\Phi_T}^2_{F}}\sum_{t=1+4m}^{T}} + \sum_{i \ne j} a_i a_j \big(\overline{{N^{(\lambda_i)}_t}(u_i)}{{N^{(\lambda_j)}_t}(u_j)}\E[\dmi{t}{i}(v_i)\overline{\dmi{t}{j}(v_j)}]+{N^{(\lambda_i)}_t}(v_i){{N^{(\lambda_j)}_t}(u_j)}\E[\overline{{\dmi{t}{i}}(u_i)}\overline{\dmi{t}{j}(v_j)}] 
 \\&
 \phantom{\frac{1}{\snorm{\Phi_T}^2_{F}}\sum_{t=1+4m}^{T}} 
 +\overline{{N^{(\lambda_i)}_t}(u_i)}\overline{N^{(\lambda_j)}_t}(v_j)\E[ \dmi{t}{i} (v_i){{\dmi{t}{j}}(u_j)}]+ {N^{(\lambda_i)}_t}(v_i) \overline{N^{(\lambda_j)}_t(v_j)}\E[\overline{{\dmi{t}{i}(u_i)}}{{\dmi{t}{j}}(u_j)}] \big) +o_p(1),
\end{align*}
while \eqref{eq:Mar2} becomes
\begin{align*}
 &\frac{1}{\snorm{\Phi_T}^2_{F}}\sum_{t=1+4m}^{T} \E\Bigg(\Big(\sum_{j=1}^{d}a_j \big(\dmi{t}{j} (v_j)\overline{{N^{(\lambda_j)}_t}(u_j)}+\overline{{\dmi{t}{j}(u_j)}} {N^{(\lambda_j)}_t}(v_j)\big)\Big)^2 \big\vert\G^{(U)}_{t-m-1}\Bigg) +o_p(1)
\\&
=\frac{1}{\snorm{\Phi_T}^2_{F}}\sum_{t=1+4m}^{T} \sum_{j=1}^d a^2_j\big( (\overline{{N^{(\lambda_j)}_t}{(u_j)}})^2 \E({\dmi{t}{j}}{(v_j)})^2+({N^{(\lambda_j)}_t}{(v_j)})^2 \E(\overline{{\dmi{t}{j}}{(u_j)}})^2 \\&\phantom{\frac{1}{\snorm{\Phi_T}^2_{F}}\sum_{t=1+4m}^{T} \sum_{j=1}^d} +2\overline{{N^{(\lambda_j)}_t}{(u_j)}} {N^{(\lambda_j)}_t}{(v_j)}\E{\dmi{t}{j}}{(v_j)}\overline{{\dmi{t}{j}}{(u_j)}}\big)
\\&  
 \phantom{\frac{1}{\snorm{\Phi_T}^2_{F}}} + \sum_{i \ne j} a_i a_j \big(\overline{{N^{(\lambda_i)}_t}(u_i)}\overline{{N^{(\lambda_j)}_t}(u_j)}\E[\dmi{t}{i}(v_i){\dmi{t}{j}}(v_j)]+{N^{(\lambda_i)}_t}(v_i)\overline{{N^{(\lambda_j)}_t}(u_j)}\E[\overline{{\dmi{t}{i}}(u_i)}{\dmi{t}{j}}(v_j)] 
 \\&
 \phantom{\frac{1}{\snorm{\Phi_T}^2_{F}}} 
 +\overline{{N^{(\lambda_i)}_t}(u_i)}{N^{(\lambda_j)}_t}(v_j)\E[ \dmi{t}{i} (v_i)\overline{{\dmi{t}{j}}(u_j)}]+ {N^{(\lambda_i)}_t}(v_i){N^{(\lambda_j)}_t}(v_j)\E[\overline{{\dmi{t}{i}}(u_i)}\overline{{\dmi{t}{j}}(u_j)}] \big)+o_p(1).
\end{align*}
We require the following lemma.
\begin{lemma}\label{lem:Zt}
Let $\{\dm{t}\} \in \op^4_H$ be a $H$-valued martingale difference process. Then, provided that conditions (i) and (iv) of \autoref{as:phi} are satisfied, we have
\begin{align*}
\bignorm{\frac{1}{\snorm{\Phi_T}^2_{F}}\sum_{t=2}^{T} M^{(\lambda_1)}_{t}\otimes {M}^{(\lambda_2)}_{t} - \E M^{(\lambda_1)}_0 \otimes \overline{M}^{(\lambda_2)}_0}_{S_2,2} =
o(1),
\end{align*}
where $M^{(\lambda)}_{t}:=\sum_{s=1}^{t-1} \phi^{(\lambda)}_{T,s-t} \dm{s}$.
\end{lemma}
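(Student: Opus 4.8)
The plan is to read this as a mean-square (i.e.\ $\op^2_{S_2(H)}$) law of large numbers for the time-average of the operator-valued quadratic expressions $M^{(\lambda_1)}_t \otimes M^{(\lambda_2)}_t$. Abbreviating $Z_T := \snorm{\Phi_T}^{-2}_{F}\sum_{t=2}^{T} M^{(\lambda_1)}_t \otimes M^{(\lambda_2)}_t$ and writing $\mu := \E M^{(\lambda_1)}_0 \otimes \overline{M}^{(\lambda_2)}_0$ for the claimed deterministic limit, I would exploit that for a random element of the Hilbert space $S_2(H)$ the mean separates the error,
\[
\E\bignorm{Z_T-\mu}^2_{S_2}=\bignorm{\E Z_T-\mu}^2_{S_2}+\E\bignorm{Z_T-\E Z_T}^2_{S_2},
\]
and establish separately that the bias term and the variance term vanish as $T\to\infty$ for fixed $m$.

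For the bias term I would expand $M^{(\lambda_1)}_t\otimes M^{(\lambda_2)}_t=\sum_{s,s'=1}^{t-1}\big(\phi^{(\lambda_1)}_{T,s-t}\widetilde{\otimes}\phi^{(\lambda_2)}_{T,s'-t}\big)\big(\dm{s}\otimes\dm{s'}\big)$ and take expectations. Since $\{\dm{t}\}$ is a martingale difference sequence, $\E[\dm{s}\otimes\dm{s'}]=O_H$ whenever $s\ne s'$, so the double sum collapses onto the diagonal $s=s'$, and stationarity replaces $\E[\dm{s}\otimes\dm{s}]$ by the fixed variance operator of $\dm{0}$. What remains is a single Cesàro-type sum of the weight operators $\phi^{(\lambda_1)}_{T,s-t}\widetilde{\otimes}\phi^{(\lambda_2)}_{T,s-t}$, whose normalisation by $\snorm{\Phi_T}^2_F$ is calibrated by the order-balance condition \autoref{as:phi}(i), namely $T\varrho_T^2=O(\snorm{\Phi_T}^2_F)$. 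Reindexing by the lag $s-t$, the oscillatory factors $e^{\im(\lambda_1-\lambda_2)(s-t)}$ produce cancellation away from $\lambda_1=\lambda_2 \ (\mathrm{mod}\ 2\pi)$; passing to the limit both identifies $\mu$ and shows $\bignorm{\E Z_T-\mu}_{S_2}\to0$.

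The variance term is the main obstacle. I would expand $\E\bignorm{Z_T-\E Z_T}^2_{S_2}$ as a quadruple sum over the two outer times $t,t'$ and the inner summation indices, which yields fourth-order moments of the $\dm{}$'s minus the products of second-order moments supplied by the centering. Two structural facts then drive the estimate. First, the martingale-difference property combined with the $m$-dependence of $\{\dm{t}\}$ forces the four inner indices to cluster within windows of width $O(m)$, annihilating all but the ``connected'' configurations. Second, once the diagonal self-covariance part is removed by the centering, the surviving contributions are exactly those in which an index attached to $t$ is paired with one attached to $t'$; summing the weight operators over the remaining free outer index in such a configuration reproduces precisely the quantity $\sum_{j}\sum_{s}\snorm{\sum_{t}\phi_{T,s-t}\widetilde{\otimes}\phi_{T,j-t}}^2_{\infty}$ that \autoref{as:phi}(iv) bounds by $o(\snorm{\Phi_T}^4_F)$, while the accompanying fourth moments are finite because $\|\dm{0}\|_{\hi,4}<\infty$ under \autoref{as:depstruc} with $p=4$ (via Proposition \autoref{prop:propertiesDM} and \autoref{lem:Burkh}). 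Dividing by $\snorm{\Phi_T}^4_F$ then gives $\E\bignorm{Z_T-\E Z_T}^2_{S_2}=o(1)$, and together with the bias estimate this completes the proof. The delicate part is the bookkeeping of which index clusterings survive the $m$-dependence and how each matches a sum controlled by condition (iv).
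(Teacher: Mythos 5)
Your proposal is viable in outline but follows a genuinely different route from the paper, and it has two concrete soft spots. The paper does \emph{not} perform a bias--variance split followed by a direct fourth-moment expansion. Instead, it treats the subtracted operator as the mean by construction: writing $Y_{T-1}=\sum_{t}M^{(\lambda_1)}_{t-1}\otimes M^{(\lambda_2)}_{t-1}-\snorm{\Phi_T}^2_F\,\E M^{(\lambda_1)}_0\otimes M^{(\lambda_2)}_0$, ergodicity gives $\E\snorm{Y_{T-1}}^2_2=\E\snorm{\sum_{j=-\infty}^{T-1}P_j(\sum_t M^{(\lambda_1)}_{t-1}\otimes M^{(\lambda_2)}_{t-1})}^2_2$, and orthogonality of the projections reduces everything to a case analysis of $P_j(\dm{s_1}\otimes\dm{s_2})$: only the configurations in which one inner index equals $j$ survive (the terms $U_j$, $U^{\dagger}_j$, bounded via \autoref{lem:Burkh} and \autoref{as:phi}(iv)) together with the diagonal $s_1=s_2>j$ (the term $V_j$, bounded by a blocking argument with $k_T=\lfloor T^{\alpha}\rfloor$ and \autoref{as:phi}(i)). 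Your fourth-moment expansion works because the intended $\dm{t}=D^{(\lambda)}_{m,t}$ is a stationary $m$-dependent martingale difference sequence (Proposition \autoref{prop:propertiesDM}(i)) -- note neither stationarity nor $m$-dependence is in the lemma's literal statement, though the paper's own proof likewise uses extra structure (stationarity and summability of $\snorm{P_0(\dm{s}\otimes\dm{s})}$-type quantities). What the projection/Burkholder route buys is that no enumeration of index clusterings is needed; what yours buys is elementarity, at the price of exactly the bookkeeping you flag as delicate.

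The two gaps. First, your bias step invokes oscillatory cancellation of $e^{\im(\lambda_1-\lambda_2)(s-t)}$ when $\lambda_1\ne\lambda_2$, but this is not obtainable from conditions (i) and (iv) alone: it requires summation by parts using the smoothness condition (iii) and the max condition (ii), and in the paper it is the content of the \emph{separate} \autoref{lem:inpsZt}, which assumes precisely those. It is also unnecessary here: the subtracted operator is (the stationary proxy for) $\E Z_T$ itself, so the bias term should vanish by the mean identification your martingale-difference collapse $\E[\dm{s}\otimes\dm{s'}]=O_H$ ($s\ne s'$) already provides, not by cancellation you cannot justify under the stated hypotheses. Second, "the surviving contributions are exactly the cross-pairings" is not correct: centering removes only the product of means, not the covariance between the two diagonal blocks, so configurations with $s_1=s_2$, $s_3=s_4$, $|s_1-s_3|\le m$ (and, more generally, all connected clusters of diameter at most $m$ among the four inner indices) survive. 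These are controlled not by (iv) but by (i): there are $O(Tm^2)$ of them and Cauchy--Schwarz in $t,t'$ bounds each weight sum by $O(\varrho_T^4)$, giving $O(m^2 T\varrho_T^4)=O(m^2\snorm{\Phi_T}^4_F/T)=o(\snorm{\Phi_T}^4_F)$ for fixed $m$ -- the analogue of the paper's $V_j$ term. Condition (iv) is needed exactly and only for the long-range pairings $\{s_1,s_2\}=\{s_3,s_4\}$ with $s_1\ne s_2$, which do not vanish under $m$-dependence (e.g., $\E[\,\|\dm{s_1}\|^2_H\|\dm{s_2}\|^2_H]=(\E\|\dm{0}\|^2_H)^2>0$ when $|s_1-s_2|>m$). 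With these two repairs your argument closes.
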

Since norm convergence implies convergence in the weak operator topology, we obtain for any $u, v \in H$
\begin{align*}
 \bignorm{\frac{1}{\snorm{\Phi_T}^2_{F}}\sum_{t=1+4m}^{T} {{N^{(\lambda_1)}_t}(u) {N^{(\lambda_2)}_t}(v)}- \E {{N^{(\lambda_1)}_t}(u) {N^{(\lambda_2)}_t}(v)}}_{\mathbb{C},2} =o(1).
\end{align*}
Therefore, we may replace them with their expectation in \eqref{eq:Mar3} and \eqref{eq:Mar2} in order to obtain, respectively, for \eqref{eq:Mar3}
\begin{align*}
& \frac{1}{\snorm{\Phi_T}^2_{F}}\sum_{t=1+4m}^{T} \sum_{j=1}^d a^2_j \Big( \E|\overline{{N^{(\lambda_j)}_t}{(u_j)}}|^2\E\big|{\dmi{t}{j}}{(v_j)}|^2 +\E|{N^{(\lambda_j)}_t}{(v_j)}|^2 \E|\overline{{\dmi{t}{j}}{(u_j)}}|^2  
\\&\phantom{\frac{1}{\snorm{\Phi_T}^2_{F}}\sum_{t=1+4m}^{T}} +\E[ \overline{{N^{(\lambda_j)}_t}{(u_j)}} \overline{{N^{(\lambda_j)}_t}{(v_j)}}] \E[ {\dmi{t}{j}}{(v_j)} {{\dmi{t}{j}}{(u_j)}}]
+\E[{N^{(\lambda_j)}_t}{(v_j)}{{N^{(\lambda_j)}_t}{(u_j)}}] \E \overline{{\dmi{t}{j}}{(u_j)}}  \overline{{\dmi{t}{j}}{(v_j)}}\Big)
\\&  
 \phantom{\frac{1}{\snorm{\Phi_T}^2_{F}}} + \sum_{i \ne j} a_i a_j \big(\E[\overline{{N^{(\lambda_i)}_t}(u_i)}{{N^{(\lambda_j)}_t}(u_j)}]\E[\dmi{t}{i}(v_i)\overline{\dmi{t}{j}(v_j)}]+\E[{N^{(\lambda_i)}_t}(v_i){{N^{(\lambda_j)}_t}(u_j)}]\E[\overline{{\dmi{t}{i}}(u_i)}\overline{\dmi{t}{j}(v_j)}] 
 \\&
 \phantom{\frac{1}{\snorm{\Phi_T}^2_{F}}\sum_{t=1+4m}^{T}} 
 +\E[\overline{{N^{(\lambda_i)}_t}(u_i)}\overline{N^{(\lambda_j)}_t}(v_j)]\E[ \dmi{t}{i} (v_i){{\dmi{t}{j}}(u_j)}]+ \E[{N^{(\lambda_i)}_t}(v_i) \overline{N^{(\lambda_j)}_t(v_j)}]\E[\overline{{\dmi{t}{i}(u_i)}}{{\dmi{t}{j}}(u_j)}] \big)
\tageq \label{eq:Cov2}
\end{align*}
and for \eqref{eq:Mar2}
\begin{align*}
&\frac{1}{\snorm{\Phi_T}^2_{F}}\sum_{t=1+4m}^{T} \sum_{j=1}^d a^2_j\Big( \E(\overline{{N^{(\lambda_j)}_t}{(u_j)}})^2 \E({\dmi{t}{j}}{(v_j)})^2+\E({N^{(\lambda_j)}_t}{(v_j)})^2 \E(\overline{{\dmi{t}{j}}{(u_j)}})^2 \\&\phantom{\frac{1}{\snorm{\Phi_T}^2_{F}}\sum_{t=1+4m}^{T} \sum_{j=1}^d} +2\E\overline{{N^{(\lambda_j)}_t}{(u_j)}} {N^{(\lambda_j)}_t}{(v_j)}\E{\dmi{t}{j}}{(v_j)}\overline{{\dmi{t}{j}}{(u_j)}}\Big)
\\&  
 \phantom{\frac{1}{\snorm{\Phi_T}^2_{F}}} + \sum_{i \ne j} a_i a_j \big(\E[\overline{{N^{(\lambda_i)}_t}(u_i)}\overline{{N^{(\lambda_j)}_t}(u_j)}]\E[\dmi{t}{i}(v_i){\dmi{t}{j}}(v_j)]+\E[{N^{(\lambda_i)}_t}(v_i)\overline{{N^{(\lambda_j)}_t}(u_j)}]\E[\overline{{\dmi{t}{i}}(u_i)}{\dmi{t}{j}}(v_j)] 
 \\&
 \phantom{\frac{1}{\snorm{\Phi_T}^2_{F}}} 
 +\E[\overline{{N^{(\lambda_i)}_t}(u_i)}{N^{(\lambda_j)}_t}(v_j)]\E[ \dmi{t}{i} (v_i)\overline{{\dmi{t}{j}}(u_j)}]+ \E[{N^{(\lambda_i)}_t}(v_i){N^{(\lambda_j)}_t}(v_j)]\E[\overline{{\dmi{t}{i}}(u_i)}\overline{{\dmi{t}{j}}(u_j)}]\big).\tageq \label{eq:PseudCov2}
\end{align*}
Next, we make use of the following auxiliary result.
\begin{lemma}\label{lem:inpsZt}
Let $\{\dm{t}\} \in \op^2_H$ be a $H$-valued martingale difference process and let conditions $\mathrm{(ii)}$ and $\mathrm{(iii)}$ in \autoref{as:phi} be satisfied. Furthermore, assume $\lambda_1 \pm\lambda_2 \ne 0 \mod 2\pi$. Then for any $u, v \in H$,
\[\sum_{t=1+4m}^{T} |\E N^{(\lambda_1)}_t(u) N^{(\lambda_2)}_t(v)| =o(\snorm{\Phi_T}^2_{F}),\]
where $N^{(\lambda)}_t$ is as defined in \eqref{eq:N4m}.
\end{lemma}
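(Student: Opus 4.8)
The plan is to use the martingale-difference structure of the increments $\{\dmi{s}{\cdot}\}$ to collapse the defining double sum of $N^{(\lambda_1)}_t(u)N^{(\lambda_2)}_t(v)$ onto its diagonal, and then to extract cancellation from the non-vanishing frequency $\lambda_1\pm\lambda_2$ by summation by parts. First I would expand, using $\phi^{(\lambda)}_{T,s-t}=A_{T,s-t}e^{\im\lambda(s-t)}$,
\[
N^{(\lambda_1)}_t(u)N^{(\lambda_2)}_t(v)=\sum_{s=1}^{t-4m}\sum_{s'=1}^{t-4m}e^{\im\lambda_1(s-t)}e^{\im\lambda_2(s'-t)}\inprod{A_{T,s-t}\dmi{s}{1}}{u}\inprod{A_{T,s'-t}\dmi{s'}{2}}{v}.
\]
For $s\ne s'$, say $s>s'$, the factor $\inprod{A_{T,s'-t}\dmi{s'}{2}}{v}$ is $\G_{s-1}$-measurable, while $\E[\inprod{A_{T,s-t}\dmi{s}{1}}{u}\mid\G_{s-1}]=\inprod{A_{T,s-t}\E[\dmi{s}{1}\mid\G_{s-1}]}{u}=0$, since each $\dmi{s}{\cdot}$ lies in the range of $P_s$ and is therefore a martingale difference (here I use that bounded linear functionals and operators commute with the conditional expectation). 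Hence every off-diagonal expectation vanishes.

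Using stationarity of $\{(\dmi{s}{1},\dmi{s}{2})\}_s$ together with $A_{T,-h}=A_{T,h}$, the surviving diagonal terms give, with $h=t-s$,
\[
\E N^{(\lambda_1)}_t(u)N^{(\lambda_2)}_t(v)=\sum_{h=4m}^{t-1}e^{-\im(\lambda_1+\lambda_2)h}\,\alpha_h,\qquad \alpha_h:=\E\big[\inprod{A_{T,h}\dmi{0}{1}}{u}\inprod{A_{T,h}\dmi{0}{2}}{v}\big],
\]
the coefficient $\alpha_h$ depending on $h$ only through $A_{T,h}$. (The conjugated products $\overline{N^{(\lambda_1)}_t(u)}\,N^{(\lambda_2)}_t(v)$ needed in \eqref{eq:Cov2}--\eqref{eq:PseudCov2} reduce identically, with the frequency $\lambda_1+\lambda_2$ replaced by $\lambda_2-\lambda_1$; both are nonzero $\mod 2\pi$ by hypothesis, so the same argument applies.)

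Next I would apply Abel summation to $S_t:=\sum_{h=4m}^{t-1}e^{-\im\mu h}\alpha_h$ with $\mu=\lambda_1+\lambda_2$. Since $\mu\ne 0 \mod 2\pi$, the partial sums $E_n=\sum_{h=4m}^{n}e^{-\im\mu h}$ are bounded by $C_\mu:=2/|1-e^{-\im\mu}|$ uniformly in $n$, whence $|S_t|\le C_\mu\big(\max_h|\alpha_h|+\sum_{h=4m}^{T}|\alpha_{h+1}-\alpha_h|\big)$. By Cauchy--Schwarz, $|\alpha_h|\le\snorm{A_{T,h}}_\infty^2\,\|u\|_H\|v\|_H\,\|\dmi{0}{1}\|_{\hi,2}\|\dmi{0}{2}\|_{\hi,2}$, so $\max_h|\alpha_h|=o(\varrho^2_T)$ by \autoref{as:phi}(ii); using bilinearity of $\alpha_h$ in $A_{T,h}$ one has $|\alpha_{h+1}-\alpha_h|\le C\snorm{A_{T,h+1}-A_{T,h}}_\infty(\snorm{A_{T,h+1}}_\infty+\snorm{A_{T,h}}_\infty)$, and a second Cauchy--Schwarz step with $\sum_h\snorm{A_{T,h}}_\infty^2\le\varrho^2_T$ and \autoref{as:phi}(iii) yields $\sum_h|\alpha_{h+1}-\alpha_h|=o(\varrho^2_T)$. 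Summing the uniform bound over $t$ gives $\sum_{t=1+4m}^{T}|S_t|=o(T\varrho^2_T)$, which \autoref{as:phi}(i), i.e. $T\varrho^2_T=O(\snorm{\Phi_T}^2_{F})$, converts into the claimed $o(\snorm{\Phi_T}^2_{F})$.

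The main obstacle is the clean reduction to the diagonal: one must verify that the merely bilinear (not sesquilinear) cross expectations vanish, which rests on each $\dmi{s}{\cdot}$ being genuinely $P_s$-measurable, so that orthogonality holds for every $s\ne s'$ and not only beyond the $m$-band. After that, the delicate point is that $h\mapsto\alpha_h$ varies slowly enough for summation by parts to bite, i.e. that its total variation is $o(\varrho^2_T)$; this is exactly what \autoref{as:phi}(iii) supplies, with \autoref{as:phi}(ii) controlling the boundary term and the nonresonance $\lambda_1\pm\lambda_2\ne 0$ furnishing the bounded oscillatory partial sums.
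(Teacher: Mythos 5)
Your proposal is correct and follows essentially the same route as the paper's proof: collapse of the double sum onto the diagonal via the martingale-difference orthogonality of $\{\dm{s}\}$, then Abel summation against the bounded exponential partial sums (the paper uses $\max_{1\le t\le T}|B_t|\le |1/\sin(\lambda/2)|$, your $C_\mu$), with the boundary term controlled by \autoref{as:phi}(ii), the total-variation term by \autoref{as:phi}(iii) together with Cauchy--Schwarz, and the final conversion via $T\varrho_T^2=O(\snorm{\Phi_T}^2_F)$. The only cosmetic difference is that the paper phrases the diagonal coefficients in Hilbert--Schmidt inner-product (Kronecker tensor) notation, $\biginprod{(A_{T,s-t}\widetilde{\otimes}\overline{A}_{T,s-t})\E(\dmi{0}{1}\otimes\overline{\dmi{0}{2}})}{u\otimes\overline{v}}_S$, rather than your scalar $\alpha_h$.
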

Suppose first that $d=1$. It follows from this \autoref{lem:inpsZt} that the third and fourth term of \eqref{eq:Cov2} and the first two terms of \eqref{eq:PseudCov2}) will be of lower order if $\lambda \ne 0, \pi$. Hence, from Proposition \autoref{prop:propertiesDM}
\[\E \inprod{\dm{t}}{u}\overline{\inprod{\dm{t}}{v}} = \sum_{|k|\le m}  \inprod{C^{(m)}_{k}(v)}{u}e^{-\im \lambda k} =2\pi \inprod{\F^{(\lambda)}_m(v)}{u}.\]
If $\lambda =0 \mod \pi$, we also have $\E \inprod{\dm{t}}{u}{\inprod{\dm{t}}{v}} =2\pi \inprod{\F^{(\lambda)}_m(v)}{(u)}$. Note that the latter is real for $v=u$. Hence, we obtain for \eqref{eq:Cov2} and \eqref{eq:PseudCov2} 
\begin{align*}
 \frac{1}{\snorm{\Phi_T}^2_{F}}\sum_{t=1+4m}^{T}&\E|\overline{{N^{(\lambda)}_t}{(u)}}|^2\E\big|{\dm{t}}{(v)}|^2 +\E|{N^{(\lambda)}_t}{(v)}|^2 \E|\overline{{\dm{t}}{(u)}}|^2  
\\& + \E \overline{{N^{(\lambda)}_t}{(u)}} \overline{{N^{(\lambda)}_t}{(v)}} \E {\dm{t}}{(v)} {{\dm{t}}{(u)}}+\E{N^{(\lambda)}_t}{(v)}{{N^{(\lambda)}_t}{(u)}} \E \overline{{\dm{t}}{(u)}}  \overline{{\dm{t}}{(v)}}
\\& = \frac{ 8\pi^2\sum_{t=1+4m}^{T} \sum_{s=1}^{t-4m}  w^2_{s-t}}{\snorm{\Phi_T}^2_{F}} \Big(\inprod{\F^{(\lambda)}_m(v)}{v}  \inprod{u}{\F^{(\lambda)}_m(u)} +\mathrm{1}_{\{0,\pi\}}(\inprod{\F^{(\lambda)}_m(v)}{u}  \inprod{\F^{(\lambda)}_m(v)}{u}  ) \Big)
\\& \overset{}{\to}{ 4\pi^2} \Big(\inprod{\F^{(\lambda)}_m(v)}{v}  \inprod{u}{\F^{(\lambda)}_m(u)} +\mathrm{1}_{\{0,\pi\}}( \inprod{\F^{(\lambda)}_m(u)}{v}  \inprod{\F^{(\lambda)}_m(u)}{v}  ) \Big)
\end{align*}
and
\begin{align*}
\frac{1}{\snorm{\Phi_T}^2_{F}}\sum_{t=1+4m}^{T}& \E(\overline{{N^{(\lambda)}_t}{(u)}})^2 \E({\dm{t}}{(v)})^2+\E({N^{(\lambda)}_t}{(v)})^2 \E(\overline{{\dm{t}}{(u)}})^2 +2\E \overline{{N^{(\lambda)}_t}{(u)}} {N^{(\lambda)}_t}{(v)}\E{\dm{t}}{(v)}\overline{{\dm{t}}{(u)}}
\\& \overset{}{\to}  { 4\pi^2} \Big(\mathrm{1}_{\{0,\pi\}}\big(\inprod{\F^{(\lambda)}_m(u)}{u}\inprod{\F^{(\lambda)}_m(v)}{v} \big)+ \inprod{\F^{(\lambda)}_m(u)}{v}  \inprod{\F^{(\lambda)}_m(u)}{v} \Big),
\end{align*}
respectively, as $T\to \infty$. This completes the proof for $d=1$. Next, suppose that $d>1$. If $\lambda_i \pm \lambda_j \ne 0 \mod 2\pi$, then by \autoref{lem:inpsZt} the cross terms are of lower order. Together this establishes the stated convergence in \eqref{eq:Mar3} and \eqref{eq:Mar2}, respectively.
\end{proof}

\subsection{Proofs auxiliary statements} \label{sec:secAux}

\begin{proof}[Proof of \autoref{lem:Zt}]
Let $Y_{T-1} =  \sum_{t=2}^{T}M^{(\lambda_1)}_{t-1} \otimes {M}^{(\lambda_2)}_{t-1} - \snorm{\Phi_T}^2_{F}\E M^{(\lambda_1)}_0 \otimes {M}^{(\lambda_2)}_0 $. 
Observe that from the properties of $\{M^{(\lambda)}_{t-1}\}$, the process $Y_{T-1}$ is $\G_{T-1}$ measurable, stationary and ergodic. 
Ergodicity of the underlying process and a telescoping argument allows us therefore to write 
\begin{align*}
\E\snorm{Y_{T-1}}^2_{2} &= \E\bigsnorm{\sum_{j=-\infty}^{T-1} P_j \Big(\sum_{t=2}^{T}M^{(\lambda_1)}_{t-1} \otimes {M}^{(\lambda_2)}_{t-1}\Big)}^2_{2} 
\\&=\sum_{j=-\infty}^{0}  \E\bigsnorm{ P_j \Big(\sum_{t=2}^{T}M^{(\lambda_1)}_{t-1} \otimes {M}^{(\lambda_2)}_{t-1}\Big)}^2_{2} +\E\sum_{j=1}^{T-1}  \bigsnorm{ P_j \Big(\sum_{t=2}^{T}M^{(\lambda_1)}_{t-1} \otimes {M}^{(\lambda_2)}_{t-1}\Big)}^2_{2}, \tageq \label{eq:Ynorm}
\end{align*}
where we used orthogonality of the projection operators $\{P_{j}\}$. We first consider the first term on the right hand side for which we have $j < 1$. Since $\{\dm{s}\}$ has uncorrelated increments
\begin{align*}
P_j (M^{(\lambda_1)}_{t-1} \otimes {M}^{(\lambda_2)}_{t-1}) &=\sum_{s,s^\prime=1}^{t-1} P_j (\phi^{(\lambda_1)}_{T,s,t} \dmi{s}{1} \otimes  { \phi^{(\lambda_2)}_{T,s^{\prime},t} \dmi{s^\prime}{2}}) = \sum_{s=1}^{t-1} P_j (\phi^{(\lambda_1)}_{T,s,t}\dmi{s}{1}\otimes \phi^{(\lambda_2)}_{T,s,t}{\dmi{s}{2}})
\\& = \sum_{s=1}^{t-1} \big(\phi^{(\lambda_1)}_{T,s,t}\widetilde{\otimes}  \phi^{(\lambda_2)}_{T,s,t}\big) P_j (\dmi{s}{1}\otimes{\dmi{s}{2}}),
\end{align*}
where we used that linear operators and expecation operators commute, i.e., $\E (A X \otimes A X) =\E ( (A \widetilde{\otimes} A) (X \otimes X))=(A \widetilde{\otimes} A)\E (  (X \otimes X))$ for $A \in S_{\infty}(H), X \in \op^4_H$.
Consequently, linearity, orthogonality of the projections, Minkowsk's inequality and stationarity of $\{\dm{s}\}$ yield
\begin{align*}
\sum_{j=-\infty}^{0} \E \bigsnorm{ P_j \Big(\sum_{t=2}^{T}M^{(\lambda_1)}_{t-1} \otimes {M}^{(\lambda_2)}_{t-1}\Big)}^2_{2} & \le \sum_{j=-\infty}^{0} \Big(\sum_{t=2}^{T} \bignorm{ P_j \Big(M^{(\lambda_1)}_{t-1} \otimes {M}^{(\lambda_2)}_{t-1}\Big)}_{S_2,2} \Big)^2
\\&=
\sum_{j=-\infty}^{0}\Big( \sum_{t=2}^{T} \Big( \bignorm{\sum_{s=1}^{t-1} \big(\phi^{(\lambda_1)}_{T,s,t}\widetilde{\otimes}  \phi^{(\lambda_2)}_{T,s,t}\big) P_0 (\dmi{s-j}{1} \otimes  {\dmi{s-j}{2}})}_{S_2,2} \Big)^{2} 
\\& =\sum_{j=-\infty}^{0} \Big(\sum_{t=2}^{T} \sum_{s=1}^{t-1} \bigsnorm{\phi^{(\lambda_1)}_{T,s,t}\widetilde{\otimes}  \phi^{(\lambda_2)}_{T,s,t}}_{\infty}  \bignorm{P_0 (\dmi{s-j}{1} \otimes  {\dmi{s-j}{2}})}_{S_2,2} \Big)^{2} 
\\& \le \sum_{j=-\infty}^{0} \Big(\sum_{s=1}^{T-1} \sum_{t=s+1}^{T}  \snorm{\phi^{(0)}_{T,s,t}}^2_{\infty} \bignorm{P_0 (\dmi{s-j}{1} \otimes  {\dmi{s-j}{2}})}_{S_2,2} \Big)^{2}. 
\end{align*}
the Cauchy-Schwarz inequality implies we obtain under \autoref{as:depstruc} 
\begin{align*}
\sum_{j=-\infty}^{0} &\Big(\sum_{s=1}^{T-1}\bignorm{P_0 (\dmi{s-j}{1} \otimes  {\dmi{s-j}{2}})}_{S_2,2}\sum_{t=s+1}^{T}  \snorm{\phi^{(0)}_{T,s,t}}^2_{\infty} \Big)^{2}
\\&  \le \sum_{j=-\infty}^{0} \Bigg( \Big[\sum_{s=1}^{T-1}\bignorm{P_0 (\dmi{s-j}{1} \otimes {\dmi{s-j}{2}})}^2_{S_2,2} \Big]^{1/2} \Big[\sum_{s=1}^{T-1} \Big( \sum_{t=s+1}^{T} \snorm{\phi^{(0)}_{T,s,t}}^2_{\infty} \Big)^2 \Big]^{1/2}  \Bigg)^{2} 
\\&  \le \sum_{s=1}^{T-1} \Big( \sum_{t=s+1}^{T}  \snorm{\phi^{(0)}_{T,s,t}}^2_{\infty} \Big)^2  \sum_{j=-\infty}^{0}  \sum_{s=1}^{T-1}\bignorm{P_0 (\dmi{s-j}{1} \otimes {\dmi{s-j}{2}})}^2_{S_2,2}
\\&= o(\snorm{\Phi_T}^4_{F}). 
\end{align*}
 For the second term of \eqref{eq:Ynorm}, i.e.,
\begin{align*}
\sum_{j=1}^{T-1} \E \bigsnorm{ P_j \Big(\sum_{t=2}^{T}M^{(\lambda_1)}_{t-1} \otimes {M}^{(\lambda_2)}_{t-1}\Big)}^2_2,
\end{align*} 
we have to distinguish cases since $1\le j \le T-1$. Firstly observe that if $1 \le t \le j$, then $P_j (M^{(\lambda_1)}_{t-1} \otimes {M}^{(\lambda_2)}_{t-1} )= O_H$ since $M^{(\lambda_1)}_{t-1} \otimes {M}^{(\lambda_2)}_{t-1}$ is $\G_{t-1}$ measurable and hence $\E [M^{(\lambda_1)}_{t-1} \otimes {M}^{(\lambda_2)}_{t-1}|G_j]= M^{(\lambda_1)}_{t-1} \otimes {M}^{(\lambda_2)}_{t-1}$. 
We can thus focus on $t >j$. To ease notation, denote $\dm{s} := D_{s}$. Since expectation and tensor operator commute, we obtain for the various cases:
\begin{itemize}\itemsep-0.2ex
\item if $s_1\le j-1$:
\begin{itemize}\itemsep-0.2ex
\item $s_2 > j$, $\E [D_{s_1} \otimes {D}_{s_2}|\G_j] = D_{s_1}\otimes\E [  {D}_{s_2}|\G_j]  = O_H$ and similarly $\E [D_{s_1} \otimes {D}_{s_2}|\G_{j-1}] =O_H$ and therefore $P_j(D_{s_1} \otimes {D}_{s_2})=O_H$.
\item $s_2 = j$: We have 
$\E [D_{s_1} \otimes {D}_{s_2}|\G_j] =D_{s_1} \otimes \E [ {D}_{s_2}|\G_j]  =D_{s_1} \otimes {D}_{s_2}$ while $\E [D_{s_1} \otimes {D}_{s_2}|\G_{j-1}] =O_H$. Hence, $P_j(D_{s_1} \otimes {D}_{s_2})=D_{s_1} \otimes {D}_{s_2}$
\item $s_2 > j-1$: We have again $P_j(D_{s_1} \otimes {D}_{s_2})=O_H$.
\end{itemize}
\item  if $s_1 >s_2 \ge j$: using the tower property, we have 
\[ \E [D_{s_1} \otimes {D}_{s_2}|\G_j] =\E [ \E [D_{s_1} \otimes {D}_{s_2}|\G_{s_2} ]|\G_j] =\E [ \E [D_{s_1} |\G_{s_2} ]\otimes {D}_{s_2}|\G_j]  =O_H. \]
\end{itemize}
Hence,
\begin{align*}
P_j (M_{t-1} \otimes {M}_{t-1}) &
=\sum_{s_1 =1}^{j-1} \big(\phi_{T,s_1,t} \widetilde{\otimes }\phi_{T,j,t}\big) ( D_{s_1} \otimes {D}_{j}) + \sum_{s_2 =1}^{j-1}\big(\phi_{T,j,t}\widetilde{\otimes }\phi_{T,s_2,t}\big)  ( D_{j} \otimes  {D}_{s_2}) 
\\&+ \sum_{s=j+1}^{t-1} \big(\phi_{T,s,t}  \widetilde{\otimes } \phi_{T,s,t}\big)  P_j (\dm{s} \otimes  {\dm{s}})  =: U_j + U^{\dagger}_j + V_j
\end{align*}
and therefore
\begin{align*}
\sum_{j=1}^{T-1} \E \bigsnorm{ P_j \Big(\sum_{t=2}^{T}M_{t-1} \otimes {M}_{t-1}\Big)}^2_2 = \sum_{j=1}^{T-1} \E \bigsnorm{ P_j \Big(\sum_{t=j+1}^{T}M_{t-1} \otimes {M}_{t-1}\Big)}^2_2
\le \sum_{j=1}^{T-1} \E \bigsnorm{\sum_{t=j+1}^{T}  U_j + U^{\dagger}_j + V_j\Big)}^2_2.
\end{align*}
For the first term, stationarity of $\{\dm{s}\}$, the properties of $\widetilde{\otimes}$ and \autoref{lem:Burkh} yield
\begin{align*}
 \sum_{j=1}^{T-1} \E \bigsnorm{\sum_{t=j+1}^{T}  U_j}^2_{2}& = \sum_{j=1}^{T-1} \E \bigsnorm{\sum_{t=j+1}^{T}  \sum_{s_1 =1}^{j-1} \phi_{T,s_1,t}\widetilde{\otimes }\phi_{T,j,t} ( D_{s_1} \otimes  \overline{D}_{j}) \Big)}^2_{2}
 \\&=  \sum_{j=1}^{T-1}  \E\bigsnorm{ \sum_{s_1 =1}^{j-1}\sum_{t=j+1}^{T} \big( \phi_{T,s_1,t}\widetilde{\otimes }\phi_{T,j,t}\big) ( D_{s_1} \otimes  \overline{D}_{j}) \Big)}^2_{2}
  \\&=  \sum_{j=1}^{T-1}  \E\bigsnorm{ \sum_{s_1 =1}^{j-1}\sum_{t=j+1}^{T}  \phi_{T,s_1,t}( D_{s_1}) \otimes \phi_{T,j,t}{ (D_{j}}) \Big)}^2_{2}
            \\&=  \sum_{j=1}^{T-1}  K^2_2  \sum_{s_1 =1}^{j-1} \bignorm{\sum_{t=j+1}^{T}  \phi_{T,s_1,t}}^2_{\infty} \| D_{s_1}\|_{\hi, 4}^2  \snorm{{\phi_{T,j,t}}}^2_\infty \| {D}_{j}\|^2_{\hi,4}
                \\&= K^2_2 \sum_{j=1}^{T-1}    \sum_{s_1 =1}^{j-1} \bignorm{\sum_{t=j+1}^{T}  \phi_{T,s_1,t} \widetilde{\otimes}{\phi_{T,j,t}}}^2_{\infty}  \| D_{0}\|^2_{\hi,4}  \|{D}_{0}\|^2_{\hi,4} 
                \\& = o(\snorm{\Phi_T}^4_{F})
\end{align*}
which follows from \autoref{as:phi}$\mathrm{(iv)}$. The same order applies to the second term. \textcolor{black}{For the third term we find
\begin{align*}
 \sum_{j=1}^{T-1} \bignorm{\sum_{t=j+1}^{T}  V_j}^2_{S_2,2}& = \sum_{j=1}^{T-1} \bignorm{ \sum_{s=j+1}^{T-1}\sum_{t=s+1}^{T} \big( \phi_{T,s,t} \widetilde{\otimes} \phi_{T,s,t}\big)P_0 (\dmi{s-j}{1} \otimes  {\dmi{s-j}{2}})}^2_{S_2,2}
  \\& \le  \sum_{j=1}^{T-1} \sum_{s=j+1}^{T-1}\Big( \bignorm{ \sum_{t=s+1}^{T} \big( \phi_{T,s,t} \widetilde{\otimes} \phi_{T,s,t}\big)}_{\infty} \bignorm{  P_0 (\dmi{s-j}{1} \otimes  {\dmi{s-j}{2}})}_{S_2,2}\Big)^2
\end{align*}
For convenience denote $A_s=\|\sum_{t=s+1}^{T} \big( \phi_{T,s,t} \widetilde{\otimes} \phi_{T,s,t}\big)\|_{\infty}$ and $\vartheta_{s-j}=\| P_0 (\dmi{s-j}{1} \otimes  {\dmi{s-j}{2}})\|_{S_2,2}$. Then we split the sum over $j$ in a sum with terms $1, \ldots, T-1-k_T$ and with terms $T-k_T,\ldots, T-1$ where $k_T=\lfloor T^{\alpha}\rfloor$, $\alpha \in (0,1)$. Additionally, we split the inner sum of the first. We then find via tedious calculations and the Cauchy-Schwarz inequality
\begin{align*}
 \sum_{j=1}^{T-1} & \Big( \sum_{s=j+1}^{T-1}A_s \vartheta_{s-j} \Big)^2  \le \sum_{j=T-k_T}^{T-1} \Big( \sum_{s=j+1}^{T-1}A_s \vartheta_{s-j} \Big)^2  + 2\sum_{j=1}^{T-1-k_T} \Big( \sum_{s=j+1}^{j+k_T-1}A_s \vartheta_{s-j} \Big)^2  +2\sum_{j=1}^{T-1-k_T} \Big( \sum_{s=j+k_T}^{T-1}A_s \vartheta_{s-j}\Big)^2
\\&  \le \sum_{j=T-k_T}^{T-1}  \Big(\sum_{s=j+1}^{T-1}A^2_s \Big) \Big(\sum_{s=j+1}^{T-1} \vartheta^2_{s-j} \Big)+ 2 \sum_{j=1}^{T-1-k_T}\Big( \sum_{s=j+1}^{j+k_T-1}A^2_s \Big) \Big( \sum_{s=j+1}^{j+k_T-1} \vartheta^2_{s-j} \Big)  +2\sum_{j=1}^{T-1-k_T} \Big( \sum_{s=j+k_T}^{T-1}A^2_s\Big) \Big( \sum_{s=j+k_T}^{T-1}\vartheta^2_{s-j}\Big)
\\&  \le \Big(\sum_{s=T-k_T+1}^{T-1}A^2_s \sum_{j=T-k_T}^{s-1} \Big)    \Big(\sum_{s=1}^{T-1} \vartheta^2_{s} \Big)
+2 C \Big(T k_T \varrho^4_T \Big)  +2\sum_{j=1}^{T-1-k_T} \Big( \sum_{s=j+k_T}^{T-1}A^2_s\Big) \Big( \sum_{s=j+k_T}^{\infty}\vartheta^2_{s-j}\Big)
\\&  \le \Big(\sum_{s=T-k_T+1}^{T-1}A^2_s k_T \Big)    \Big(\sum_{s=1}^{T-1} \vartheta^2_{s} \Big) 
+2 C \Big(T k_T \varrho^4_T \Big) +2\sum_{j=1}^{T-1-k_T} \Big( \sum_{s=j+k_T}^{T-1}A^2_s\Big) \Big( \sum_{s=j+k_T}^{\infty}\vartheta^2_{s-j}\Big)
\\&  \le C\Big(\sum_{s=j+1}^{T-1}\big(\sum_{t=s+1}^{T}\| ( \phi_{T,s,t} \widetilde{\otimes} \phi_{T,s,t})\|_{\infty} \big)^2 \Big) k_T  +2 C \Big( T k_T \varrho^4_T \Big)
+2\sum_{j=1}^{T-1-k_T}\sum_{s=j+k_T}^{T-1} \big(\sum_{t=s+1}^{T}\| \phi_{T,s,t}\|^2_{\infty} \big)^2\Big( \sum_{s=k_T}^{\infty}\vartheta^2_{s}\Big)
\\& \le C \Big(T \varrho^4_{T} k_T +  T  \varrho_T^4  \sum_{s=k_T}^{\infty}\vartheta^2_{s}\Big) =o(\snorm{\Phi_T}^4_F),
\end{align*}
for some generic bounded constant $C$, which follows from \autoref{as:depstruc} and \autoref{as:phi}$\mathrm{(i)}$ for any $k_T \to \infty$ such that $\frac{k_T}{T} \to 0$ as $T \to \infty$.}
\end{proof}

\begin{proof}[Proof of \autoref{lem:secM}]
By \autoref{lem:Burkh} and Jensen's inequality, we obtain for fixed $m$,
\begin{align*}
&\bignorm{\sum_{t=2}^{T} \dm{t}  \otimes \big(\sum_{s=t-4m+1 \vee 1}^{t-1} \phi^{(\lambda)}_{T,s-t} \dm{s}\big)}^2_{S_2,2}
  \le  K^2_2  \|\dm{0}\|^2_{\hi,4} \sum_{t=2}^{T} \sum_{s=t-4m+1 \vee 1}^{t-1} \snorm{\phi^{(\lambda)}_{T,s-t}}_{\infty}^2 \norm{ \dm{s} }^2_{\hi,4} 
  \\& =2  K^2_4  \|\dm{0}\|^4_{\hi,4} \Big( \sum_{t=2}^{4m} \sum_{s=1}^{t-1 }\snorm{\phi^{(\lambda)}_{T,s-t}}_{\infty}^2  + \sum_{t=4m+1}^{T} \sum_{s=t-4m+1 }^{t-1}\snorm{ \phi^{(\lambda)}_{T,s-t}}_{\infty}^2 \Big)
 \\& \le  2  K^2_4  \|\dm{0}\|^4_{\hi,4} O \Big( (4m) \varrho_{4m}^2 + T  4m \max_{t} \snorm{A_{T,t}}_{\infty}^2\Big) =o(\snorm{\Phi_T}^2_{F}) + To(\varrho_{T}^2)  = o(\snorm{\Phi_T}^2_{F}). 
\end{align*}
\end{proof}

\begin{proof}[Proof of \autoref{lem:inpsZt}]
Denote $\lambda = \lambda_1 \pm \lambda_2$ and recall that $N^{(\lambda)}_{m,t}=\sum_{s=1}^{t-4m} \phi^{(\lambda)}_{T,s-t} \dm{s}$. Since the increments of $\{\dm{s}\}$ are uncorrelated, we have
\begin{align*}
\sum_{t=1+4m}^{T}\Big|\E \inprod{N^{(\lambda_1)}_{m,t}}{u} \inprod{N^{(\lambda_2)}_{m,t}}{v}\Big|&=\sum_{t=1+4m}^{T}\Big|\sum_{s=1}^{t-4m}e^{\im \lambda(s-t)} \E \inprod{A_{T,s-t}(\dmi{s}{1})}{u}  \inprod{A_{T,s-t}( \dmi{s}{2})}{v}\Big|
\\& =\sum_{t=1+4m}^{T}\Big|\sum_{s=1}^{t-4m}e^{\im \lambda (s-t)} \E \biginprod{A_{T,s-t}(\dmi{s}{1}) \otimes \overline{A_{T,s-t}(\dmi{s}{2})}}{u \otimes \overline{v}}_S\Big|
\\&=\sum_{t=1+4m}^{T}\Big| e^{-\im \lambda t}\sum_{s=1}^{t-4m}e^{\im \lambda s} \biginprod{ (A_{T,s-t} \widetilde{\otimes} \overline{A}_{T,s-t})  \E(\dmi{0}{1} \otimes \overline{\dmi{0}{2}})}{u \otimes \overline{v}}_S\Big|
\\& \le \sum_{t=1+4m}^{T}\Big|\sum_{s=1}^{t-4m}e^{\im \lambda s} \biginprod{ A_{T,s-t} \E(\dmi{0}{1} \otimes \overline{\dmi{0}{2}})\overline{A}^{\dagger}_{T,s-t}  }{u \otimes \overline{v}}_S\Big|.
\end{align*}
To ease notation, set $W_{s-t} =\biginprod{ (A_{T,s-t} \widetilde{\otimes} \overline{A}_{T,s-t})  \E(\dmi{0}{1} \otimes \overline{\dmi{0}{2}})}{u \otimes \overline{v}}_S$ and write $B_j = \sum_{k=1}^{j} e^{\im \lambda j}$. Summation by parts, and Holder's inequality for operators yield
\begin{align*}
& \sum_{t=1+4m}^{T}\Big|\sum_{s=1}^{t-4m}  W_{s-t} (B_s-B_{s-1})\Big|=
 \sum_{t=1+4m}^{T} \Big|W_{4m} B_{t-4m} +\sum_{t=1+4m}^{T} \sum_{s=1}^{t-4m-1} B_s\big(W_{s-t}-W_{s+1-t} \big)\Big|
 \\&\le \sum_{t=1+4m}^{T} |W_{4m} B_{t-4m}| +\sum_{t=1+4m}^{T} \sum_{s=1}^{t-4m-1} |B_s| \bigsnorm{ \big(A_{T,s-t} \widetilde{\otimes}( \overline{A}_{T,s-t}- \overline{A}_{T,s-t+1}) \E(\dmi{0}{1}) \otimes \overline{\dmi{0}{2}})}_2 \snorm{u \otimes \overline{v}}_2
 \\&+\sum_{t=1+4m}^{T}  \sum_{s=1}^{t-4m-1} |B_s| \bigsnorm{\Big((A_{T,s-t} - {A}_{T,s-t+1} )\widetilde{\otimes} \overline{A}_{T,s-t+1}\big)   \E(\dmi{0}{1}) \otimes \overline{\dmi{0}{2}})}_2 \snorm{u \otimes \overline{v}}_2
  \\&\le \sum_{t=1+4m}^{T} |W_{4m} B_{t-4m}| +\sum_{t=1+4m}^{T} \sum_{s=1}^{t-4m-1} |B_s| \snorm{ A_{T,s-t}}_{\infty} \snorm{\overline{A}_{T,s-t}- \overline{A}_{T,s-t+1}}_{\infty} \snorm{\E(\dmi{0}{1}) \otimes \overline{\dmi{0}{2}})}_2  \norm{u}_H \norm{\overline{v}}_H
 \\&+\sum_{t=1+4m}^{T}  \sum_{s=1}^{t-4m-1} |B_s| \snorm{A_{T,s-t} - {A}_{T,s-t+1}}_{\infty} \snorm{\overline{A}_{T,s-t+1}}_{\infty} \snorm{\E(\dmi{0}{1}) \otimes \overline{\dmi{0}{2}})}_2 \norm{u}_H \norm{\overline{v}}_H.
 \end{align*}
Then, using Jensen's inequality and the Cauchy-Schwarz inequality twice, we obtain
\begin{align*}
 &\le C_1  \norm{\dmi{0}{1}}_{\hi,2}  \norm{\overline{\dmi{0}{2}}}_{\hi,2} | 1/\sin (\lambda/2)|  \Big( \sum_{t=1+4m}^{T} \snorm{A_{4m}}^2_{\infty} + \sum_{t=1+4m}^{T} \sum_{s=1}^{t-4m-1} \snorm{ A_{T,s-t}}_{\infty} \snorm{\overline{A}_{T,s-t}- \overline{A}_{T,s-t+1}}_{\infty}
\\&  + \sum_{t=1+4m}^{T} \sum_{s=1}^{t-4m-1}\snorm{A_{T,s-t} - {A}_{T,s-t+1}}_{\infty} \snorm{\overline{A}_{T,s-t+1}}_{\infty} \Big)
\\ &\le C_1 C_2 | 1/\sin (\lambda/2)|  \Big( \sum_{t=1+4m}^{T} \snorm{A_{4m}}^2_{\infty} + \sum_{t=1+4m}^{T} \Big(\sum_{s=1}^{t-4m-1} \snorm{ A_{T,s-t}}^2_{\infty} \sum_{s=1}^{t-4m-1}\snorm{\overline{A}_{T,s-t}- \overline{A}_{T,s-t+1}}^2_{\infty}\Big)^{1/2}\\&  + \sum_{t=1+4m}^{T}\big( \sum_{s=1}^{t-4m-1}\snorm{A_{T,s-t} - {A}_{T,s-t+1}}^2_{\infty} \sum_{s=1}^{t-4m-1} \snorm{\overline{A}_{T,s-t+1}}^2_{\infty} \Big)^{1/2}\Big)
\\& \le O(| 1/\sin (\lambda/2)|) \Big( O(T)+O(T) o(\varrho_T) O(\varrho_T)+O(T)  O(\varrho_T) o(\varrho_T)\Big)= o(\snorm{\Phi_T}^2_{F}),
\end{align*}
where we used that $\max_{1\le t\le T}|B_t| \le |1/(\sin(\lambda/2))|$. 
\end{proof}

\section{Operator approximations} \label{sec:A_opAprox}

\begin{proof}[Proof of \autoref{lem:bilaprox}]
We can decompose the quadratic form 
\begin{align*}
 \hat{\mathcal{Q}}^{\lambda}_{T}&  = \ldx{}+\ldx{\dagger} +\sum_{1 \le t \le T} \phi_{T,t,t}  (X_t \otimes X_t) 
\end{align*}
Set $\hat{\mathcal{C}}_T=\sum_{t=1}^{T} X_t \otimes  X_t$. Then, using linearity of the operator $ \Phi_{T,t,t}$
\begin{align*}
\bignorm{\hat{\mathcal{Q}}^{\lambda}_{T}-\E\hat{\mathcal{Q}}^{\lambda}}_{S_2,2}&\le \bignorm{\ldx{}+\ldx{\dagger}  -\E\ldx{}-\E \ldx{\dagger} }_{S_2,2}+\bignorm{ \Phi_{T,t,t} ( \hat{\mathcal{C}}_T-\E \hat{\mathcal{C}}_T )}_{S_2,2}.
\end{align*}
For the last term, Holder's inequality for operators and \autoref{eq:nullpart} yield
\begin{align*}
 \frac{1}{\snorm{\Phi_T}_{F}}\bignorm{\Phi_{T,t,t} ( \hat{\mathcal{C}}_T-\E \hat{\mathcal{C}}_T )}_{S_2,2}\le  \frac{1}{\snorm{\Phi_T}_{F}}\norm{\Phi_{T,t,t}}_{\infty} \bignorm{ \hat{\mathcal{C}}_T-\E \hat{\mathcal{C}}_T}_{S_2,2}=o(\varrho_T)O\big(\frac{\sqrt{T}}{\snorm{\Phi_T}_{F}} \big) =o(1),
\end{align*}
For the first term, we find using \autoref{lem:mdapprox} and \autoref{lem:marappr}, respectively
\begin{align*}
\frac{1}{\snorm{\Phi_T}_{F} }\bignorm{\ldx{}-\E\ldx{} -\ldm{}}_{S_2,2} &\le \frac{1}{\snorm{\Phi_T}_{F}  }\bignorm{\ldx{}-\E\ldx{}-(\ldd{}-\E \ldd{})}_{S_2,2} 
\\& +\frac{1}{\snorm{\Phi_T}_{F}  }\bignorm{\ldd{}-\E \ldd{}-\ldm{}}_{S_2,2}
\\&\le  K_4 \Upsilon_{4,m}\sum_{t=0}^{\infty}\nu_{\hi,4}(X_t)
\\& + \norm{X_0}_{\hi,4}^2 \frac{m^2 \sqrt{T}}{\snorm{\Phi_T}_{F} } \big( \max_{t}\snorm{A_{T,t}}_{\infty}^2+m\sum_{s=1}^{T}\snorm{A_{T,s-t}-A_{T,s-(t+1)}}_{\infty}^2 \big)^{1/2}. \end{align*}
The result therefore follows from \autoref{as:phi}.
\end{proof}

\begin{lemma}\label{eq:nullpart} Let $X_t$ satisfy \autoref{as:depstruc} with $p=4$. Then 
\[\bignorm{\sum_{1 \le t \le T} (X_t \otimes X_t)- T\E(X_0 \otimes X_0)}_{S_2,2} = O(\sqrt{T} \sum_{j=0}^{\infty}\nu_{\hi, 4}(X_j) )=O(\sqrt{T}). \]
\end{lemma}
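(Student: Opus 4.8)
The plan is to regard $Z_t := X_t \otimes X_t$ as a stationary, ergodic process with values in the separable Hilbert space $S_2(H)$ (recall $X_t \otimes X_t \in \op^2_{S_2(H)}$ under \autoref{as:depstruc} with $p=4$), so that $Y_t := Z_t - \E(X_0 \otimes X_0)$ is centered in $\op^2_{S_2(H)}$, and to bound $S_T := \sum_{t=1}^{T} Y_t$ by the martingale--projection argument underlying \eqref{eq:ProjX}, now applied with $H$ replaced by $S_2(H)$. Using the convergent decomposition $Y_t = \sum_{j \le t} P_j(Y_t)$ in $\op^2_{S_2(H)}$ and re-indexing the double sum by the lag $i = t-j \ge 0$, I would write
\[
S_T = \sum_{i=0}^{\infty} \sum_{t=1}^{T} P_{t-i}(Y_t).
\]
The crucial point is that, for each fixed $i$, the terms $\{P_{t-i}(Y_t)\}_{t=1}^{T}$ carry pairwise distinct projection indices and are hence strongly orthogonal in $\op^2_{S_2(H)}$; together with stationarity this gives $\bignorm{\sum_{t=1}^{T} P_{t-i}(Y_t)}_{S_2,2}^2 = \sum_{t=1}^{T} \E\snorm{P_{t-i}(Y_t)}_{2}^2 = T\,\theta_i^2$, where $\theta_i := \norm{P_0(X_i\otimes X_i)}_{S_2,2}$ (the constant $\E(X_0\otimes X_0)$ is annihilated by $P_0$). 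Minkowski's inequality over $i$ then yields
\[
\bignorm{S_T}_{S_2,2} \le \sum_{i=0}^{\infty} \Bignorm{\sum_{t=1}^{T} P_{t-i}(Y_t)}_{S_2,2} = \sqrt{T}\,\sum_{i=0}^{\infty} \theta_i .
\]

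For the coefficients $\theta_i$ I would use the coupling representation of the projection already exploited for the first-order bound $\norm{P_0(X_t)}_{\hi,p} \le \nu_{\hi,p}(X_t)$, namely $P_0(W) = \E[\,W - W^{(0)} \mid \G_0\,]$ for $W \in \op^2_{S_2(H)}$, where $W^{(0)}$ denotes $W$ with the innovation $\epsilon_0$ replaced by its independent copy $\epsilon_0'$. Applying this with $W = X_i \otimes X_i$, writing $X_i^{(0)}$ for the coupled field and expanding the product,
\[
X_i\otimes X_i - X_i^{(0)}\otimes X_i^{(0)} = (X_i - X_i^{(0)})\otimes X_i + X_i^{(0)}\otimes (X_i - X_i^{(0)}),
\]
the contraction property of the conditional expectation, the identity $\snorm{f\otimes g}_2 = \|f\|_H\,\|g\|_H$, the Cauchy--Schwarz inequality and stationarity give
\[
\theta_i \le \bignorm{X_i\otimes X_i - X_i^{(0)}\otimes X_i^{(0)}}_{S_2,2} \le 2\,\|X_0\|_{\hi,4}\,\|X_i - X_i^{(0)}\|_{\hi,4}.
\]
Since the coupled difference is controlled by the dependence coefficient, $\|X_i - X_i^{(0)}\|_{\hi,4} \le 2\,\nu_{\hi,4}(X_i)$, this yields $\theta_i \le 4\,\|X_0\|_{\hi,4}\,\nu_{\hi,4}(X_i)$, with $\|X_0\|_{\hi,4} < \infty$ since $X_0\in\op^4_H$.

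Combining the two bounds,
\[
\bignorm{\sum_{1 \le t \le T}(X_t\otimes X_t) - T\,\E(X_0\otimes X_0)}_{S_2,2} \le 4\,\|X_0\|_{\hi,4}\,\sqrt{T}\,\sum_{i=0}^{\infty}\nu_{\hi,4}(X_i) = O\Big(\sqrt{T}\sum_{j=0}^{\infty}\nu_{\hi,4}(X_j)\Big),
\]
and the right-hand side is $O(\sqrt{T})$ because $\sum_{j}\nu_{\hi,4}(X_j) < \infty$ under \autoref{as:depstruc} with $p=4$.

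I expect the main obstacle to be the second step. Unlike a linear statistic, $P_0$ of the \emph{quadratic} term $X_i \otimes X_i$ does not factor, so one cannot invoke \autoref{lem:lineq} directly and must instead reduce the second-order increment to the first-order coefficient $\nu_{\hi,4}(X_i)$ through the single-coordinate coupling and the product splitting above; this is precisely where the $\op^4$ integrability ($p=4$ rather than $p=2$) is consumed, via Cauchy--Schwarz applied to $\|X_i-X_i^{(0)}\|_H\,\|X_i\|_H$. The re-indexing/orthogonality step is conceptually the heart of the bound but is routine once the summands are grouped by common projection index.
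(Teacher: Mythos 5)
Your proof is correct and follows essentially the same route as the paper's: a martingale--projection decomposition of $\sum_{t}(X_t\otimes X_t)-T\,\E(X_0\otimes X_0)$ in $\op^2_{S_2(H)}$, strong orthogonality of the projections plus stationarity to extract the factor $\sqrt{T}$, and a reduction of the terms $\snorm{P_0(X_i\otimes X_i)}$ to $\nu_{\hi,4}(X_i)$ by splitting the tensor difference into two rank-adjusted pieces and applying Cauchy--Schwarz with $\op^4_H$ norms. The only (immaterial) deviations are that you apply Minkowski over the lags $i$ and orthogonality over $t$, whereas the paper applies orthogonality over the projection index $j$ and Minkowski over $t$, and that you pass through the full coupling $X_i^{(0)}$ at the cost of a factor $2$ via $\|X_i-X_i^{(0)}\|_{\hi,4}\le 2\,\nu_{\hi,4}(X_i)$, whereas the paper works directly with $X_{i,\{0\}}=\E[X_i\,|\,\G_{i,\{0\}}]$.
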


\begin{proof}[Proof of \autoref{eq:nullpart}]
By stationarity, ergodicity and orthogonality of the projection operators
\begin{align*}
\bignorm{\sum_{1 \le t \le T} (X_t \otimes X_t)- T\E(X_0 \otimes X_0)}^2_{S_2,2} &= \bignorm{ \sum_{j=-\infty}^{T}\sum_{t=1}^{T}P_j ( X_{t} \otimes X_t)}^2_{S_2,2} \le \sum_{j=-\infty}^{T}\bignorm{  \sum_{t=1}^{T} P_j ( X_{t} \otimes X_t)}^2_{S_2,2}.
\end{align*}
Minkowski's inequality, the Cauchy-Schwarz inequality and stationarity imply
\begin{align*}
\bignorm{  \sum_{t=1}^{T} P_j ( X_{t} \otimes X_t)}_{S_2,2} 
& = \sum_{j=-\infty}^{T} \sum_{t=1}^{T}\bignorm{P_0 ( X_{t-j} \otimes X_{t-j})}_{S_2,2} 
\\& 
\le  \sum_{t=1}^{T}\bignorm{X_{t-j} \otimes (X_{t-j}-X_{t-j,\{0\}}) }_{S_2,2} +
 \bignorm{(X_{t-j}-X_{t-j,\{0\}})  \otimes X_{t-j,\{0\}} }^2_{S_2,2} 
\\& \le  \sum_{t=1}^{T} (\E \|X_{t-j}\|^4_H)^{1/4}( \E\| (X_{t-j}-X_{t-j,\{0\}}) \|^4_{H})^{1/4} + (\E \|X_{t-j,\{0\}}\|^4_H)^{1/4}( \E\| (X_{t-j}-X_{t-j,\{0\}}) \|^4_{H})^{1/4}
\\&
\le 2 \|X_0\|_{\hi,4} \sum_{t=1}^{T} {\nu}_{\hi,4}(X_{t-j}).
\end{align*}
Consequently, 
\[\sum_{j=-\infty}^{T}\bignorm{  \sum_{t=1}^{T} P_j ( X_{t} \otimes X_t)}^2_{S_2,2} \le 
4 \|X_0\|^2_{\hi,4} \sum_{j=-\infty}^{T} \Big(\sum_{t=1}^{T} {\nu}_{\hi,4}(X_{t-j})\big)^2
\le 4 \|X_0\|^2_{\hi,4} T  (\sum_{j=0}^{\infty} {\nu}_{\hi,4}(X_j))^2.\]

The result follows by taking the square root.
\end{proof}

\begin{lemma}[M-dependence approximation]\label{lem:mdapprox}
Suppose \eqref{eq:depstruc2} with $2p$ is satisfied for some $p \ge 2$. Then 
\begin{align*}
\frac{\|\ldx{} - \E \ldx{} - (\ldd{}- \E \ldd{})\|_{S_2,p}}{\sqrt{T}\snorm{\phi_T}_{\ell_{2}}\sum_{t=0}^{\infty}\nu_{\hi,2p}(X_t)} \le K_p \Upsilon_{2p,m}
\end{align*}
where $\Upsilon_{2p,m} = 2\sum_{t=0}^{\infty}\min(\nu_{\hi, 2p}(X_t),\boldsymbol{\Delta}^{1/2}_{2p,2,m+1})$ and
\begin{align*}
\ldx{}:=\sum_{s=2}^T  \sum_{t=1}^{s-1} \Phi^{(\lambda)}_{T,s,t}( X_s \otimes X_t)
\quad \text{and} \quad \ldd{}:=\sum_{s=2}^T  \sum_{t=1}^{s-1} \Phi^{(\lambda)}_{T,s,t}( X^{(m)}_s \otimes X^{(m)}_t).
\end{align*}
\end{lemma}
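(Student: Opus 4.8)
The plan is to reduce the $S_2(H)$-valued difference to a sum of orthogonal martingale projections and to estimate each projected tensor by a coupling argument, exploiting that one factor of every tensor is the small $m$-dependence remainder $W_t:=X_t-X^{(m)}_t$. First I would split the summand by the telescoping identity
\[
X_s\otimes X_t-X^{(m)}_s\otimes X^{(m)}_t=W_s\otimes X_t+X^{(m)}_s\otimes W_t,
\]
so that $\ldx{}-\ldd{}=R^{(1)}_T+R^{(2)}_T$, where $R^{(1)}_T=\sum_{s=2}^T\sum_{t=1}^{s-1}\Phi^{(\lambda)}_{T,s,t}(W_s\otimes X_t)$ and $R^{(2)}_T$ is the analogous sum with $X^{(m)}_s\otimes W_t$. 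By the triangle inequality it suffices to bound $\norm{R^{(i)}_T-\E R^{(i)}_T}_{S_2,p}$ for $i=1,2$, the two cases being symmetric. Since $R^{(i)}_T$ is $\G_T$-measurable, stationary and ergodic, the tail $\sigma$-field is trivial and \eqref{eq:ProjX} gives $R^{(i)}_T-\E R^{(i)}_T=\sum_{j\le T}P_j(R^{(i)}_T)$, an orthogonal martingale difference sequence in $\op^p_{S_2(H)}$. As $S_2(H)$ is itself a separable Hilbert space, \autoref{lem:Burkh} applies with the identity operator and $q=\min(2,p)=2$ (recall $p\ge2$), so that
\[
\norm{R^{(i)}_T-\E R^{(i)}_T}^2_{S_2,p}\le K^2_p\sum_{j\le T}\norm{P_j(R^{(i)}_T)}^2_{S_2,p}.
\]
Pulling $P_j$ inside by Minkowski's inequality, using $\norm{\Phi^{(\lambda)}_{T,s,t}(Z)}_{S_2,p}\le\snorm{\phi^{(\lambda)}_{T,s,t}}_\infty\norm{Z}_{S_2,p}$ and stationarity, each norm reduces to an $(s,t)$-sum of the per-term quantities $\norm{P_0(W_{s-j}\otimes X_{t-j})}_{S_2,p}$.

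The per-term estimate is the heart of the argument. Writing $P_0Z=\E[Z-Z^\ast\mid\G_0]$, where $Z^\ast$ is $Z$ with the coordinate $\epsilon_0$ replaced by an independent copy (so that $P_0Z^\ast=0$), and telescoping $W_a\otimes X_b-W_a^\ast\otimes X_b^\ast=(W_a-W_a^\ast)\otimes X_b+W_a^\ast\otimes(X_b-X_b^\ast)$, the contraction property of conditional expectation together with Cauchy--Schwarz ($\norm{f\otimes g}_{S_2,p}\le\|f\|_{\hi,2p}\|g\|_{\hi,2p}$) yields
\[
\norm{P_0(W_a\otimes X_b)}_{S_2,p}\le\|W_a-W_a^\ast\|_{\hi,2p}\,\|X_0\|_{\hi,2p}+\|W_0\|_{\hi,2p}\,\|X_b-X_b^\ast\|_{\hi,2p}.
\]
Two competing controls now appear at each lag: the coupling differences of $X$ are exactly the dependence coefficients $\nu_{\hi,2p}(X_\cdot)$, whereas the remainder satisfies the uniform bound $\|W_0\|^2_{\hi,2p}=\|X_0-X^{(m)}_0\|^2_{\hi,2p}\le K^2_{2p}\boldsymbol{\Delta}_{2p,2,m+1}$ exactly as in \autoref{lem:lineq}(iii). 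Estimating each projected tensor by the smaller of the direct coefficient $\nu_{\hi,2p}$ at the relevant lag and the uniform tail $\boldsymbol{\Delta}_{2p,2,m+1}$, and summing over the lag, produces the truncated quantity $\Upsilon_{2p,m}=2\sum_{t\ge0}\min(\nu_{\hi,2p}(X_t),\boldsymbol{\Delta}_{2p,2,m+1})$.

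Finally I would reassemble the estimate. After the substitution $(a,b)=(s-j,t-j)$ the weight $\snorm{\phi^{(\lambda)}_{T,s,t}}_\infty=\snorm{A_{T,s-t}}_\infty$ depends only on $s-t=a-b$; applying Cauchy--Schwarz in the double sum and summing the at most $T$ translates along each diagonal $s-t=\mathrm{const}$ yields one factor $\sqrt T$ and one factor $\snorm{\phi_T}_{\ell_2}$, while the orthogonal lag direction carries the two dependence sums $\Upsilon_{2p,m}$ and $\sum_{t\ge0}\nu_{\hi,2p}(X_t)$, the stationary factor $\|X_0\|_{\hi,2p}$ being absorbed into the constant. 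Adding the bounds for $R^{(1)}_T$ and $R^{(2)}_T$ gives the claimed inequality.

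The main obstacle is the per-term step together with its bookkeeping: one must track exactly which factor $P_0$ acts on, confirm that the $m$-dependent remainder is the binding small factor, and organize the triple sum over $(s,t,j)$ so that precisely one power of $\sqrt T$ and one factor $\snorm{\phi_T}_{\ell_2}$ survive while the two lag directions decouple into $\Upsilon_{2p,m}$ and $\sum_{t\ge0}\nu_{\hi,2p}(X_t)$. Retaining the truncation (the $\min$) rather than a single crude bound is precisely what forces $\Upsilon_{2p,m}\to0$ as $m\to\infty$, which is the whole point of the $m$-dependence approximation.
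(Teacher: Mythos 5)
Your overall architecture matches the paper's in spirit: the same telescoping of $X_s\otimes X_t-X^{(m)}_s\otimes X^{(m)}_t$ into two remainders, the expansion over the orthogonal projections $P_j$ with a Burkholder-type bound in $\op^p_{S_2(H)}$, the coupling identity $P_j Z=\E[Z-Z_{\{j\}}\mid\G_j]$ with the contraction property, and the retention of the $\min$ between $\nu_{\hi,2p}$ and the $m$-tail to form $\Upsilon_{2p,m}$. However, there is a genuine quantitative gap at the reassembly step. Once you ``pull $P_j$ inside by Minkowski'' and reduce everything to the rank-one per-term quantities $\norm{P_0(W_{s-j}\otimes X_{t-j})}_{S_2,p}$, each summand carries decay in only one of the two indices: for instance the piece $\|W_0\|_{\hi,2p}\,\nu_{\hi,2p}(X_{t-j})$ is constant in $s$, so the sum over $s$ at fixed $t$ is the unweighted sum $\sum_{s>t}\snorm{\phi_{T,s-t}}_{\infty}\le\snorm{\phi_T}_{\ell_1}$, and no Cauchy--Schwarz along diagonals can convert this into $\snorm{\phi_T}_{\ell_2}$ without paying an extra $\sqrt T$ (the inequality $\snorm{\phi_T}_{\ell_1}\le\sqrt{T}\snorm{\phi_T}_{\ell_2}$ is sharp in general). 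Concretely, your route yields at best
\[
\Big(\sum_{j\le T}\Big(\sum_{s,t}\snorm{\phi_{T,s-t}}_{\infty}\,e_{s-j,t-j}\Big)^2\Big)^{1/2}=O\big(\sqrt{T}\,\snorm{\phi_T}_{\ell_1}\,\Upsilon_{2p,m}\big),
\]
where $e_{a,b}$ denotes your per-term bound. For kernel weights $w(b_T\cdot)$ this exceeds the claimed bound by the factor $\snorm{\phi_T}_{\ell_1}/\snorm{\phi_T}_{\ell_2}\asymp b_T^{-1/2}\to\infty$, and in the place where the lemma is used (Lemma \ref{lem:bilaprox}, with normalization $\snorm{\Phi_T}_F\asymp\sqrt{T}\snorm{\phi_T}_{\ell_2}$) it leaves a term of order $b_T^{-1/2}\Upsilon_{2p,m}$, which does not vanish under $\lim_m\limsup_T$ because $m$ is held fixed as $T\to\infty$. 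So the bound you would actually obtain is too weak for the purpose the lemma serves.

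The missing idea is that $\snorm{\phi_T}_{\ell_2}$ is a square-function gain and must come from Burkholder applied to the \emph{entire weighted inner sums}, not be extracted from the weights after a per-term triangle inequality. The paper keeps $N_{T,s}-N^{(m)}_{T,s}=\sum_{t<s}\phi^{(\lambda)}_{T,s-t}(X_t-X^{(m)}_t)$ intact, expands $X_t-X^{(m)}_t=\sum_{i\ge m+1}D_{t,i}$ into backward-filtration martingale differences, and applies \autoref{lem:Burkh} via \autoref{lem:lineq}(iii) to obtain $\|N_{T,s}-N^{(m)}_{T,s}\|_{\hi,2p}\le(K^2_p\snorm{\phi_T}^2_{\ell_2}\boldsymbol{\Delta}_{2p,2,m+1})^{1/2}$; for the double-coupling term it first transposes the weights onto the other tensor leg using \eqref{eq:Phiprop} and applies the same martingale-transform inequality to $\sum_{s>t}\phi^{(-\lambda)}_{T,s-t}(X_{s,\{j\}})$, so that only the single second-difference factor $X_t-X^{(m)}_t+X^{(m)}_{t,\{j\}}-X_{t,\{j\}}$ is estimated per-term by $2\min\big(\boldsymbol{\Delta}_{2p,1,m+1},\nu_{\hi,2p}(X_{t-j})\big)$. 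In that arrangement the lag direction enters quadratically (producing $\snorm{\phi_T}_{\ell_2}$) while the coupling coefficients remain linearly summable in $j$ (producing the single $\sqrt T$); with purely rank-one per-term bounds after Minkowski, these two gains cannot be realized simultaneously, which is exactly where your proposal fails.
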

\begin{proof}[Proof of \autoref{lem:mdapprox}]
Let $N^{(m)}_{T,s} =  \sum_{t=1}^{s-1} \phi^{(\lambda)}_{T,s-t} (X^{(m)}_{t})$ and $N_{T,s} =  \sum_{t=1}^{s-1} \phi^{(\lambda)}_{T,s-t} (X_{t})$ and observe these are $\G_s$-measurable. 
By orthogonality of the projections and Minkowski's inequality
\begin{align*}
\|\ldx{} - \E \ldx{} - (\ldd{} - \E \ldd{})\|^2_{S_2,p}& \le  2
  \sum_{j=-\infty}^{T} \|P_j(\ldx{}  - \tilde{\mathcal{V}}^{(m),\lambda}_T)\|^2_{S_2,p} +
\|P_j(\tilde{\mathcal{V}}^{(m),\lambda}_T - \ldd{})\|^2_{S_2,p},
\end{align*}
where $\tilde{\mathcal{V}}^{(m),\lambda}_T =\sum_{s=2}^T X_s \otimes N^{(m)}_{T,s}$. We shall focus on bounding the first term as the second is similar and has the same upper bound. A similar trick as in \autoref{lem:lineq} shows
\[
\E[\ldx{}-\tilde{\mathcal{V}}^{(m),\lambda}_T| \G_{{j-1}}] = \E[\ldx{}-\tilde{\mathcal{V}}^{(m),\lambda}_T| \G_{{j},\{j\}}] =  \E[\mathcal{V}_{T,\{j\}}-\tilde{\mathcal{V}}^{(m)}_{T,\{j\}}| \G_j].
\]
By the contraction property of the conditional expectation
\begin{align*}
\bignorm{P_j(\ldx{}  - \tilde{\mathcal{V}}^{(m),\lambda}_T)}_{S_2,p} &
\le \bignorm{\ldx{}-\tilde{\mathcal{V}}^{(m),\lambda}_T-(\mathcal{V}_{T,\{j\}}-\tilde{\mathcal{V}}^{(m)}_{T,\{j\}})}_{S_2,p}
\\&  \le \bignorm{\sum_{s=2}^{T} (X_s -X_{s,\{j\}}) \otimes ( N_{T,s}- N^{(m)}_{T,s} )}_{S_2,p}
\\&+ \bignorm{\sum_{s=2}^{T} X_{s,\{j\}} \otimes ( N_{T,s}- N^{(m)}_{T,s} -N_{T,s,\{j\}}+N^{(m)}_{T,s,\{j\}} ) }_{S_2,p}=: J_1 +J_2,
\end{align*}
where we added and subtracted $X_{s,\{j\}} \otimes ( N_{T,s}- N^{(m)}_{T,s} )$ and applied Minkowski's inequality. From \autoref{lem:lineq}(iii)
\[
\| N_{T,s}- N^{(m)}_{T,s}\|_{\hi,2p} \le  (K^q_{p}\snorm{\phi_T}^q_{\ell_q}\boldsymbol{\Delta}^q_{2p,1,m+1})^{1/q}
\]
and $\|X_s -X_{s,\{j\}}\|_{\hi,2p} \le \nu_{\hi,2p}(X_{s-j})$. Then, by the Cauchy-schwarz inequality and recalling that  $q=\min(2,2p)$ 
\begin{align*}
\sum_{j=-\infty}^{T} J^2_1 &
 \le \sum_{j=-\infty}^{T}  \Big(\sum_{s=2}^{T} \nu_{\hi,2p}(X_{s-j})\Big)^2 \Big(\big(K^2_{p}\snorm{\phi_T}^2_{\ell_2}\boldsymbol{\Delta}^2_{2p,1,m+1})^{1/2} \big)^2
\\& \le K^2_{p}\snorm{\phi_T}^2_{\ell_2}\boldsymbol{\Delta}^2_{2p,1,m+1}
 \sum_{s=2}^{T} \sum_{j=-\infty}^{T} \nu_{\hi,2p}(X_{s-j})   \sum_{s=2}^{T} \nu_{\hi,2p}(X_{s-j})  \\&  
 \le T  K^2_{p}\snorm{\phi_T}^2_{\ell_2}\boldsymbol{\Delta}^2_{2p,1,m+1}\boldsymbol{\Delta}_{2p,1,0}^2, 
\end{align*}
where we used that $ \sum_{s=2}^{T}  \nu_{\hi,2p}(X_{s-j}) \le \boldsymbol{\Delta}_{2p,1,0} $. Secondly, from \autoref{lem:lineq}, \eqref{eq:Xcoup} and Minkowski's inequality, we obtain
\begin{align*}
& \|X_s- X^{(m)}_s+X^{(m)}_{s,\{j\}} -X_{s,\{j\}}\|_{\hi,2p} 
\\& \le \min(\|X_s- X^{(m)}_s\|_{\hi,2p}+\|X^{(m)}_{s,\{j\}} -X_{s,\{j\}}\|_{\hi,2p},\|X_s-X_{s,\{j\}} \|_{\hi,2p}
 +\|X^{(m)}_{s,\{j\}} -X^{(m)}_s\|_{\hi,2p})
 \\&\le 2 \min\big( (\sum_{j=m+1}^{\infty} \|D_{t,j}\|^2_{\hi,2p})^{1/2}, \nu_{\hi,2p}(X_{s-j})\big).
\end{align*}
Hence, changing the order of summation and using property \eqref{eq:Phiprop} of ${\Phi}^{(\lambda)}_{T,t,s}$ yields
\begin{align*}
\sum_{j=-\infty}^{T} J^2_2 \le & 
 \sum_{j=-\infty}^{T} \bignorm{\sum_{s=2}^{T}\sum_{t=1}^{s-1} X_{s,\{j\}} \otimes\phi_{T,t-s}^{(\lambda)} (X_t- X^{(m)}_t+X^{(m)}_{t,\{j\}} -X_{t,\{j\}}) }^2_{S_2,p}
\\& =\sum_{j=-\infty}^{T}(\sum_{t=1}^{T-1} \bignorm{\sum_{s=t+1}^{T}\phi_{T,s-t}^{(-\lambda)} (X_{s,\{j\}}) \otimes (X_t- X^{(m)}_t+X^{(m)}_{t,\{j\}} -X_{t,\{j\}}) }_{S_2,p}\Big)^2
\\& \le K^2_{p}\snorm{\phi_T}^2_{\ell_2}  \boldsymbol{\Delta}_{2p,1,0}   2 \sum_{j=-\infty}^{T}  \boldsymbol{\Delta}_{2p,1,0}\Big( \sum_{t=1}^{T-1}\min\big(  \boldsymbol{\Delta}^{1/2}_{2p,2,m+1},  \nu_{\hi,2p}(X_{t-j}) \big)\Big)^2
\\&  \le K^2_{p}\snorm{\phi_T}^2_{\ell_2} \boldsymbol{\Delta}_{2p,1,0}   2 T   \boldsymbol{\Delta}_{2p,1,0} \Upsilon^2_{2p,m}.
\end{align*}
Noting that $\Upsilon_{2p,m} \ge \boldsymbol{\Delta}_{2p,1,m+1}$, we obtain
\begin{align*}
  \sum_{j=-\infty}^{T} &\|P_j(\ldx{}  - \tilde{\mathcal{V}}^{(m),\lambda}_T)\|^2_{S_2,p} +
\|P_j(\tilde{\mathcal{V}}^{(m),\lambda}_T - \ldd{})\|^2_{S_2,p}
\\& 
\le 2 K^2_{p}\snorm{\phi_T}^2_{\ell_2}   T  \boldsymbol{\Delta}^2_{2p,1,0}  \Upsilon^2_{2p,m}.
\end{align*}

\end{proof}

\begin{lemma}[martingale approximation to the $m$-dependent process]\label{lem:marappr}
Let $\ldm{}$ as defined in \eqref{eq:ldm} and $\ldd{}$ as in \autoref{lem:mdapprox}. Under \autoref{as:depstruc} with $p=4$, we have
\begin{align*}
\frac{\|\ldd{}- \E \ldd{}-\ldm{}\|_{S_2,2}}{m^{3/2} \sqrt{T} \|X_0\|^2_{\hi,4}} \le C\big( \max_{t}\snorm{A_{T,t}}_{\infty}+\big(m\sum_{s=1}^{T} \snorm{A_{T,s}-A_{T,s-1)}}_{\infty}^2 \big)^{1/2}\Big).
\end{align*}
\end{lemma}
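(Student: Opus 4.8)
The plan is to compare $\ldd{}-\E\ldd{}$ and $\ldm{}$ projection by projection, exploiting that both are centered, $\G_T$-measurable elements of $\op^2_{S_2(H)}$. Write $\ldd{}=\sum_{s=2}^T X^{(m)}_s\otimes N^{(m)}_{T,s}$ with $N^{(m)}_{T,s}=\sum_{t=1}^{s-1}\phi^{(\lambda)}_{T,s-t}(X^{(m)}_t)$ as in \autoref{lem:mdapprox}, and $\ldm{}=\sum_{k=2}^T \dm{k}\otimes M^{(\lambda)}_k$ with $M^{(\lambda)}_k=\sum_{t=1}^{k-1}\phi^{(\lambda)}_{T,t-k}\dm{t}$ as in \autoref{lem:Zt}. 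First I would record that, by the martingale-difference property of $\{\dm{k}\}$ from Proposition \autoref{prop:propertiesDM}(i), one has $\dm{k}=P_k(\dm{k})$ while $M^{(\lambda)}_k$ is $\G_{k-1}$-measurable, so $P_k(\dm{s}\otimes M^{(\lambda)}_s)=O_H$ unless $s=k$, giving $P_k(\ldm{})=\dm{k}\otimes M^{(\lambda)}_k$. Since $\ldd{}-\E\ldd{}=\sum_k P_k(\ldd{})$ and the images $P_k(\cdot)$ are strongly orthogonal in $\op^2_{S_2(H)}$ (with only $O(T)$ indices contributing, by $m$-dependence), this yields the Pythagorean reduction $\norm{\ldd{}-\E\ldd{}-\ldm{}}^2_{S_2,2}=\sum_{k}\norm{P_k(\ldd{})-\dm{k}\otimes M^{(\lambda)}_k}^2_{S_2,2}$, so it suffices to control each level.

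Next I would expand $P_k(\ldd{})=\sum_{s=2}^T P_k(X^{(m)}_s\otimes N^{(m)}_{T,s})$ via the product rule $P_k(a\otimes b)=P_k(a)\otimes\E[a'|\G_{k-1}]+\E[a|\G_{k-1}]\otimes P_k(b)+\big(P_k(a)\otimes P_k(b)-\E[P_k(a)\otimes P_k(b)|\G_{k-1}]\big)$, which follows from $a\otimes b=\sum_{j,j'}P_j(a)\otimes P_{j'}(b)$ and orthogonality exactly as in the case analysis of \autoref{lem:Zt}. Because $X^{(m)}_s$ is $m$-dependent, $P_k(X^{(m)}_s)\ne O_H$ only for $k\le s\le k+m$, so the window $\{P_k(X^{(m)}_{k+u})\}_{u=0}^{m}$ is exactly the one assembling (with phases $e^{-\im\lambda u}$) into $\sqrt{2\pi}\,\dm{k}$. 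The crucial mismatch is that in $P_k(\ldd{})$ the factor $P_k(X^{(m)}_{k+u})$ is paired with the weight $\phi^{(\lambda)}_{T,(k+u)-t}$, whereas in $\dm{k}\otimes M^{(\lambda)}_k$ the pairing uses $\phi^{(\lambda)}_{T,t-k}$; after absorbing $e^{\im\lambda u}$ into $\dm{k}$ and using $A_{T,t}\equiv A_{T,-t}$, this leaves the pure amplitude discrepancy $A_{T,(k+u)-t}-A_{T,k-t}$.

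I would then perform the bilinear Abel summation that reindexes the grouping by observation time $s$ into the grouping by innovation level $k$, writing the discrepancy as a telescoping sum $A_{T,(k+u)-t}-A_{T,k-t}=\sum_{v=k+1}^{k+u}\big(A_{T,v-t}-A_{T,(v-1)-t}\big)$ of at most $m$ consecutive differences of the form $A_{T,s-t}-A_{T,s-(t+1)}$. Estimating each resulting term with \autoref{lem:Burkh} (using orthogonality of the $\{\dm{t}\}$ increments inside $M^{(\lambda)}_k$), together with $\norm{P_k(X^{(m)}_{k+u})}_{\hi,4}\le 2\norm{X_0}_{\hi,4}$ and $\norm{\dm{0}}_{\hi,4}\lesssim\norm{X_0}_{\hi,4}$ from Proposition \autoref{prop:propertiesDM}(ii), the smoothness contribution summed over the $O(T)$ levels is $\lesssim m^4\norm{X_0}^4_{\hi,4}\,T\,m\sum_{s=1}^{t-4m}\snorm{A_{T,s-t}-A_{T,s-(t+1)}}^2_\infty$. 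The complementary error consists of incomplete windows: the near-diagonal band $0\le s-t\lesssim 4m$ (treated as in \autoref{lem:secM}) and the $O(m)$ edge levels $k$ near $1$ and $T$ where the martingale differences are truncated; each such term is bounded crudely by $\max_{t}\snorm{A_{T,t}}^2_\infty=\max_t\snorm{\phi^{(\lambda)}_{T,t}}^2_\infty$, producing the $m^4\norm{X_0}^4_{\hi,4}\,T\max_t\snorm{A_{T,t}}^2_\infty$ term. Adding the two and taking square roots gives the stated bound.

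The main obstacle is the second and third steps: organizing $P_k(\ldd{})$ so that the window $\{P_k(X^{(m)}_{k+u})\}_{u=0}^m$ reassembles cleanly into $\dm{k}$ while the paired weight is simultaneously shifted from $\phi^{(\lambda)}_{T,(k+u)-t}$ to $\phi^{(\lambda)}_{T,t-k}$. This Abel summation must be carried out in both tensor slots and for both the $U_j$-type (single-innovation) and $V_j$-type (repeated-innovation) contributions encountered in \autoref{lem:Zt}, with careful separation of which partial pieces are the genuine leading terms that cancel against $\ldm{}$ and which constitute the boundary and smoothness remainders. Keeping the per-level estimates uniform in $k$ so that summation over the $O(T)$ active levels yields exactly the $\sqrt{T}$ rate, and verifying that the near-diagonal truncation reproduces precisely the $\sum_{s=1}^{t-4m}$ appearing in the statement, is the delicate bookkeeping.
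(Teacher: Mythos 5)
Your proposal is correct in substance but genuinely differently organized from the paper's proof, and the comparison is instructive. The paper never decomposes by innovation level: it splits $\ldd{}-\E\ldd{}-\ldm{}$ directly into three processes, $\sum_t M^{\star}_t+(Y_t-\E Y_t)+(Z_t-\E Z_t)$, where the off-band main term $M^{\star}_t=X^{(m)}_t\otimes\sum_{s\le t-4m}\phi^{(\lambda)}_{T,s-t}(X^{(m)}_s-\dm{s})$ is handled by the closed-form identity $X^{(m)}_k=W_k-\E[W_{k+1}|\G_k]e^{-\im\lambda}$ and $\dm{k}=W_k-\E[W_k|\G_{k-1}]$ with $W_k=\sum_{u=0}^{m}\E[X^{(m)}_{u+k}|\G_k]e^{-\im u\lambda}$. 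This makes $\phi^{(\lambda)}_{T,s-t}(X^{(m)}_s-\dm{s})$ exactly telescopic in $s$, so a single Abel summation against $A_{T,s-t}-A_{T,s-t-1}$ plus \autoref{lem:Burkh} gives the per-$t$ bound, and the $\sqrt{T}$ rate comes from splitting $\sum_t M^{\star}_t$ into $4m$ interleaved subsequences, each a martingale difference sequence in $\op^2_{S_2}$. You replace both devices: your Pythagorean reduction $\snorm{\ldd{}-\E\ldd{}-\ldm{}}$-squared $=\sum_k\|P_k(\ldd{})-\dm{k}\otimes M^{(\lambda)}_k\|^2_{S_2,2}$ (valid, since $P_k(\ldm{})=\dm{k}\otimes M^{(\lambda)}_k$ and only $O(T+m)$ levels are active) delivers $\sqrt{T}$ automatically and is arguably cleaner than the $4m$-block argument, while your telescoping of $A_{T,(k+u)-t}-A_{T,k-t}$ over at most $m$ unit steps is the discrete counterpart of the paper's $V_s-V_{s+1}$ summation by parts — the phase bookkeeping does cancel, by the same mechanism that makes the paper's $V_s$ substitution phase-consistent, and your per-level estimates sum to the stated $m^2\sqrt{T}$ budget. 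What the paper's $W_k$ identity buys is that the substitution $X^{(m)}_s\mapsto\dm{s}$ in the \emph{second} slot is telescoped in one stroke; in your scheme that substitution is not a ``pure amplitude discrepancy'' until you also re-expand the second slot by innovation index and telescope there, which you flag but which roughly doubles the bookkeeping.

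One local error to fix: your displayed product rule for $P_k(a\otimes b)$ is false as stated. Expanding $a\otimes b=\sum_{j,j'}P_j(a)\otimes P_{j'}(b)$ and applying $P_k$ leaves, besides your three terms, the contributions $\sum_{j>k}P_k\big(P_j(a)\otimes P_j(b)\big)$ — repeated innovations strictly above level $k$ (cross terms with distinct $j\ne j'$ above $k$ do vanish by conditioning on the larger index). These are exactly the $V_j$-type terms of \autoref{lem:Zt}; by $m$-dependence of $X^{(m)}$ they arise only from near-diagonal pairs $0<t-s\le m$ and hit only the levels $k\in[j-m,j)$, so they are absorbed into your band remainder (with room to spare once summability of the $\nu_{\hi,4}$ coefficients is used instead of the crude bound $2\|X_0\|_{\hi,4}$). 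Since you separately announce handling the repeated-innovation contributions, this is an inconsistency of the writeup rather than a conceptual gap, but as displayed the identity is wrong and the per-level case analysis must include these terms explicitly.
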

\begin{proof}[Proof of \autoref{lem:marappr}]
By construction $\dm{k}\in \op^{p}_H$ defines an $m$-dependent martingale difference and therefore we can write $\dm{k} = \sum_{t=0}^{m} P_{k}(X^{(m)}_{t+k})e^{-\im t \lambda}$ since the terms $t>m$ are zero. We decompose the difference $\ldd{}-\ldm{}$ as follows
\begin{align*}
\sum_{t=2}^{T} & X^{(m)}_t \otimes \sum_{s=1}^{t-1} \phi^{(\lambda)}_{T,s-t} (X^{(m)}_s -\dm{s}) + \sum_{t=2}^{T} (X^{(m)}_t-\dm{t}) \otimes \sum_{s=1}^{t-1} \phi^{(\lambda)}_{T,s-t} \dm{s}  
\\& =\sum_{t=2}^{T} X^{(m)}_t \otimes \Big( \sum_{s=1}^{t-4m} \phi^{(\lambda)}_{T,s-t} (X^{(m)}_s -\dm{s}) + \sum_{s=t-4m+1}^{t-1} \phi^{(\lambda)}_{T,s-t} (X^{(m)}_s -\dm{s}) \Big) 
\\&+ \sum_{t=2}^{T} (X^{(m)}_t-\dm{t}) \otimes \sum_{s=1}^{t-1} \phi^{(\lambda)}_{T,s-t} \dm{s}:=  \sum_{t} M^{\star}_t+ Y_{t}+Z_{t}.  \tageq \label{eq:decMarap}
\end{align*}
Note that $\ldd{}-\E\ldd{}-\ldm{} = \sum_{t}M^{\star}_t +Y_t -\E Y_t +Z_t -\E Z_t$.
We treat the above terms separately. Firstly, we consider $M^{\star}_t :=X^{(m)}_t \otimes \sum_{s=1}^{t-4m} \phi^{(\lambda)}_{T,s-t} (X^{(m)}_s -\dm{s})$. The process $\big\{M^{\star}_{t+4mk}\big\}_{k \in \mathbb{N}}$ is then a martingale difference sequence in $\op^2_{S_2}$.
Let $W_k= \sum_{t=0}^{m} \E[ X^{(m)}_{t+k} |\G_k] e^{-\im t \lambda}$ and observe that 
\begin{align*}
X^{(m)}_k & = \sum_{t=0}^{m} \E[ X^{(m)}_{t+k} |\G_k] e^{-\im t \lambda} -  \sum_{t=1}^{m} \E[ X^{(m)}_{t+k} |\G_k] e^{-\im t \lambda}
\\&=  \sum_{t=0}^{m} \E[ X^{(m)}_{t+k} |\G_k] e^{-\im t \lambda} -  \sum_{t=0}^{m-1} \E[ X^{(m)}_{t+k+1} |\G_k] e^{-\im (t+1) \lambda} =W_k - \E[W_{k+1}| \G_k]e^{-\im \lambda}
\end{align*}
and that $\dm{k} = W_k - \E[W_{k}| \G_{k-1}]$. Therefore, 
\begin{align*}
\bignorm{\sum_{s=1}^{t-4m}  \phi^{(\lambda)}_{T,s-t} (X^{(m)}_s- \dm{s})}_{\hi,2} &= \bignorm{\sum_{s=1}^{t-4m}  A_{T,s-t}e^{-\im (s-t)\lambda} (\E[W_{s}| \G_{s-1}] - \E[W_{s+1}| \G_s]e^{-\im \lambda})}_{\hi,2} 
\\&  \bignorm{\sum_{s=1}^{t-4m}  A_{T,s-t} \Big(e^{-\im (s-t)\lambda} \E[W_{s}| \G_{s-1}] -e^{\im (t-(s+1))\lambda} \E[W_{s+1}| \G_s]\Big)}_{\hi,2}. 
\end{align*}
Set $V_s = e^{\im (t-s)\lambda} \E[W_{s}| \G_{s-1}] $. Summation by parts, Holder's inequality for operators and \autoref{lem:Burkh} together yield
\begin{align*}
\bignorm{\sum_{s=1}^{t-4m}  A_{T,s-t} \Big(V_s-V_{s+1}\Big)}_{\hi,2}
& \le \max_{t}\snorm{A_{T,t}}_{\infty}\|V_{t-4m}\|_{\hi,2} + \bignorm{\sum_{s=1}^{t-4m} (A_{T,s-t}-A_{T,s-t-1}) (\sum_{l=1}^{m}P_{s-l}V_s)}_{\hi,2}
\\& \le \max_{t}\snorm{A_{T,t}}_{\infty}\|V_{t-4m}\|_{\hi,2} + \sqrt{\bignorm{\sum_{l=1}^{m}\sum_{s=1}^{t-4m} (A_{T,s-t}-A_{T,s-t-1}) (P_{s-l}V_s)}^2_{\hi,2}}
\\& \le  2m \|X_0\|_{\hi,2}  \max_{t}\snorm{A_{T,t}}_{\infty}+\big(\sum_{s=1}^{t-4m} \snorm{A_{T,s-t}-A_{T,s-t-1}}_{\infty}^2 \big)^{1/2} C m^{3/2}\|X_0\|_{\hi,2}. 
\end{align*}
Hence, for some finite constant $C$,
\[\| M^{\star}_t\|_{\hi,2} \le C m \|X_0\|^2_{\hi,2}\big(  \max_{t}\snorm{A_{T,t}}_{\infty}+(\sum_{s=1}^{t-4m} \snorm{A_{T,s-t}-A_{T,s-t-1}}_{\infty}^2 m )^{1/2}\big), 
\]
where we used the contraction property and stationarity to find $\|V_{t-4m}\|_{\hi,2} \le \|W_k\|_{\hi,2} \le 2m\|X_0\|_{\hi,2}$, and where we used that $( (\sum_{l=1}^{m}\|P_{-l}V_0\|_{\hi,2})^2)^{1/2} \le ( \sum_{l=1}^{m}\|P_{-l}V_0\|^2_{\hi,2})^{1/2} \le \sqrt{m} 2m \|X_0\|_{\hi,2} $ since the $P_j(\cdot)$ form martingale differences. Consequently, a martingale decomposition of the sum gives
\begin{align*}
\bignorm{\sum_{t=1}^{T} M^{\star}_t }_{S_2,2}&\le \sum_{t=1}^{4m} \bignorm{\sum_{s=0}^{\lfloor (T-t)/(4m) \rfloor} M^{\star}_{t+4m s} }_{S_2,2} 
\\& \le 4 m^{3/2} T^{1/2} C \|X_0\|^2_{\hi,2} \big(   \max_{t}\snorm{A_{T,t}}_{\infty}+(\sum_{s=1}^{T} \snorm{A_{T,s}-A_{T,s-1}}_{\infty}^2 m )^{1/2} \big).
\end{align*}
For $Y_{t}$, we note that $\sum_{s=t-4m+1}^{t-1} \phi^{(\lambda)}_{T,s-t} (X^{(m)}_s -\dm{s})$ is $\G_{t-1}$ measurable and that $Y_t$ is $5m$-dependent. Therefore, via Minkowski's inequality, the Cauchy-Schwarz inequality and a similar decomposition as above shows that
\begin{align*}
\bignorm{\sum_t Y_{t} - \E Y_t }_{S_2,2}=C  \sqrt{T}  m^{3/2} \|X_0\|^2_{\hi,4} (  \max_{t}\snorm{ A_{T,t}}_{\infty}+\sum_{s=1}^{T} \snorm{A_{T,s}-A_{T,s-1}}_{\infty}^2 m^{1/2})\end{align*}
and similarly for $\|\sum_t Z_{t} - \E Z_t \|_{S_2,2}$.
\end{proof}
\section{Proofs of \autoref{sec:sec4}} \label{sec:ApF}
\begin{proof}[Proof of \autoref{thm:consF}]
We first prove part (i) and consider the following bias variance decomposition
\begin{align*}
\snorm{\hat{\F}_T^{(\lambda)}-\F^{(\lambda)}}_{S_2,p}&  \le  \snorm{\hat{\F}_T^{(\lambda)}-\E\hat{\F}_T^{(\lambda)}}_{S_2,p}+\snorm{\,\E\hat{\F}_T^{(\lambda)}-\F^{(\lambda)}}_{S_2,p}.
\end{align*}
We start with the first term. We decompose the error as
\begin{align}
\hat{\F}^{(\lambda)}_{T}- \E \hat{\F}^{(\lambda)}_{T} &=\frac{1}{T} \Bigg(\sum_{s=2}^T X_s \otimes N^{(\lambda)}_{T,s} - \E \sum_{s=2}^T X_s \otimes N^{(\lambda)}_{T,s} +(\sum_{s=2}^T X_s \otimes N^{(\lambda)}_{T,s})^{\dagger}- \E(\sum_{s=2}^T X_s \otimes N^{(\lambda)}_{T,s})^{\dagger}\Bigg) \label{eq:Fvar1}
\\& +\frac{1}{T}\bigg(\sum_{1 \le t \le T}A_{T,0}  (X_t \otimes X_t)  -\E \sum_{1 \le t \le T}A_{T,0} (X_t \otimes X_t)  \Bigg) \label{eq:Fvar2},
\end{align}
where $N^{(\lambda)}_{T,s} =  \sum_{t=1}^{s-1} \phi^{(\lambda)}_{T,t,s} X_t$. We first derive the order of  \eqref{eq:Fvar1} in $\op^2_{S_2(H)}$. Using ergodicity and othogonality of the projection operator, we can write
\begin{align*}
&\bignorm{ \sum_{t=2}^T X_t \otimes N^{(\lambda)}_{T,t-1} - \E \sum_{t=2}^T X_t \otimes N^{(\lambda)}_{T,t-1}}^2_{S_2,p}
 = \sum_{j=-\infty}^{T}\bignorm{P_j \Big(\sum_{t=2}^T\sum_{s=1}^{t-1} (X_t \otimes  \phi^{(\lambda)}_{T,s,t} X_s ) \Big)}^2_{S_2,p}.
\end{align*}
The contraction property of the conditional expectation and the Cauchy-Schwarz inequality imply
\begin{align*}
&\bignorm{P_j \Big(\sum_{t=2}^T\sum_{s=1}^{t-1}\phi^{(\lambda)}_{T,t,s} (X_t \otimes   X_s ) \Big)}_{S_2,p} 
\le \bignorm{\sum_{t=2}^T\sum_{s=1}^{t-1}\phi^{(\lambda)}_{T,t,s} \Big( (X_t \otimes   X_s ) -(X_{t,\{j\}} \otimes   X_{s,\{j\}} ) \Big)}_{S_2,p} 
\\& =
 \bignorm{\sum_{t=2}^T\sum_{s=1}^{t-1}\phi^{(\lambda)}_{T,t,s} \Big( X_t \otimes (X_s - X_{s,\{j\}}) + (X_t - X_{t,\{j\}}) \otimes   X_{s,\{j\}}  \Big)}_{S_2,p} 
 \\ &
 \le  \bignorm{\sum_{t=2}^T\sum_{s=1}^{t-1}\phi^{(\lambda)}_{T,t,s} \Big( X_t \otimes (X_s - X_{s,\{j\}})\Big)}_{S_2,p}+ \bignorm{\sum_{t=2}^T\sum_{s=1}^{t-1}\phi^{(\lambda)}_{T,t,s} \Big(  (X_t - X_{t,\{j\}}) \otimes   X_{s,\{j\}}  \Big)}_{S_2,p} 
 \\& 
  \le  \sum_{s=1}^{T-1}  \bignorm{ \sum_{t=s+1}^{T} \phi^{(\lambda)}_{T,t,s} X_t \otimes (X_s - X_{s,\{j\}})}_{S_2,p} + \sum_{t=2}^T\bignorm{\sum_{s=1}^{t-1}\phi^{(\lambda)}_{T,t,s} \Big(  (X_t - X_{t,\{j\}}) \otimes   X_{s,\{j\}}  \Big)}_{S_2,p}
  \\&  \le \sum_{s=1}^{T-1} \bigg( \bignorm{ \sum_{t=s+1}^{T}  \phi^{(\lambda)}_{T,t,s} X_t}^2_{\hi,2p} \bignorm{X_s - X_{s,\{j\}}}^2_{\hi,2p}\bigg)^{1/2} +\sum_{t=2}^T\bignorm{ \Big(  (X_t - X_{t,\{j\}}) \otimes \sum_{s=1}^{t-1}\phi^{(\lambda)}_{T,s,t}  X_{s,\{j\}}  \Big)}_{S_2,p}
    \\&  \le \sum_{s=1}^{T-1} \bigg( \bignorm{ \sum_{t=s+1}^{T} \phi^{(\lambda)}_{T,t,s} X_t}^2_{\hi,2p} \bignorm{X_s - X_{s,\{j\}}}^2_{\hi,2p} \bigg)^{1/2}+\sum_{t=2}^T \bigg(\norm{X_t - X_{t,\{j\}}}^2_{\hi,2p} \bignorm{ \sum_{s=1}^{t-1}\phi^{(\lambda)}_{T,s,t}  X_{s,\{j\}}}^2_{\hi,2p}\big)^{1/2}.
\end{align*}
Hence, from \autoref{lem:lineq} we obtain 
\begin{align*}
&\frac{1}{T^2}\sum_{j=-\infty}^{T} \bignorm{P_j \Big(\sum_{t=2}^T\sum_{s=1}^{t-1}\phi^{(\lambda)}_{T,t,s} (X_t \otimes   X_s ) \Big)}^2_{S_2,2} 
\\& \le \frac{1}{T^2} \sum_{j=-\infty}^{T} \Big(K_{2p}  \max_{1\le s \le T-1} \snorm{\phi_{s,T}}_{\ell_p} \boldsymbol{\Delta}_{2p,1,0}\sum_{s=1}^{T-1} \nu_{\hi,2p}(X_{s-j})   + \sum_{t=2}^{T} \nu_{\hi,2p}(X_{t-j})  K_{2p}  \max_{2\le t \le T} \snorm{\phi_{t,T}}_{\ell_p}  \boldsymbol{\Delta}_{2p,1,0} \Big)^2
\\& \le \frac{1}{T^2} K^2_{2p} ( \max_{1\le t \le T} \snorm{\phi_{t,T}}_{\ell_p})^2 \boldsymbol{\Delta}^2_{2p,1,0} \sum_{j=-\infty}^{T} \Big(\sum_{s=2}^{T-1} \nu_{\hi,2p}(X_{s-j})   + \sum_{t=2}^{T} \nu_{\hi,2p}(X_{t-j})  \Big)^2
\\& \le \frac{1}{T^2} 4 K^2_{2p} ( \max_{1\le t \le T} \snorm{\phi_{t,T}}_{\ell_p})^2 T \boldsymbol{\Delta}^4_{2p,1,0}  =  O((b_T T)^{-1}).
\end{align*}
From \autoref{eq:nullpart}, it is immediate that \eqref{eq:Fvar1} is of order $O(T^{-1})$ in $\op^p_{S_2}$. It therefore follows by Minkowski's inequality that
\[
 \norm{\hat{\F}_T^{(\lambda)}-\E\hat{\F}_T^{(\lambda)}}^2_{S_2,p} = O((b_T T)^{-1}).
\]
Let us then consider the bias. Observe that from \eqref{eq:relFQ}, stationarity yields
\begin{align*}
\E \hat{\F}_T^{\lambda}& =\frac{1}{2\pi T}\sum_{s,t=1}^{T} \E(X_s \otimes X_t)  w(b_T(t-s)) e^{\im \lambda (t-s)} 
 =\frac{1}{2\pi} \sum_{|h| \le T} w(b_Th)\frac{1}{T}\sum_{t =\max(1,1-h)}^{\min(T,T-h)}\cov(X_{t+h} \otimes X_t) e^{-\im \lambda h}
\\&=\frac{1}{2\pi} \sum_{|h| < T} w(bh)(1-\frac{|h|}{T})C_{h}e^{-\im \lambda  h}.
\end{align*}
Hence, using Minkowski's inequality we can bound the error by
\begin{align*}
\snorm{\E\hat{\F}_T^{(\lambda)}-\F^{\lambda}}_2& \le \bigsnorm{\underbrace{\frac{1}{2\pi} \sum_{|h| < T} (w(b_Th)-1) C_{h}e^{-\im\lambda h}}_{R_{o,\lambda}}}_{2}  +\bigsnorm{\underbrace{\frac{1}{2\pi} \sum_{|h| \ge T} C_{h}e^{-\im \lambda h}}_{R_{1,\lambda}}}_2 
\\& + \bigsnorm{\underbrace{\frac{1}{2\pi} \sum_{|h| <T} w(b_Th)\frac{|h|}{T}C_{h}e^{-\im \lambda h}}_{R_{2,\lambda}}}_2.  \tageq \label{eq:Fbias}
\end{align*}
Since $\sum_{h} \snorm{C_h}<\infty$, it is immediate that
\[
\sup_{\lambda}\snorm{R_{1,\lambda}}_2 \le \sum_{|h| \ge T} \snorm{C_{h}}_2 \to 0 \text{ as } T \to \infty.
\]
Since $w(\cdot)$ is bounded, the final term satisfies $\sum_{h \in \znum} w(bh) \snorm{C_h}_2 \le \sup_x|w(x)| \sum_{h \in \znum}\snorm{C_h}_2 < \infty$. Hence, by Kronecker's lemma $\sup_{\lambda}\snorm{R_{2,\lambda}}_2 \to 0$ as $T \to \infty$. Finally, provided that $b_T \to 0$ as $T \to \infty$ and since $\lim_{x\to 0}w(x) = w(0)$, we obtain $w(b_T h) \to w(0)=1$ as $T \to \infty$. Thus, $\sup_{\lambda}\snorm{R_{0,\lambda}}_2 \to 0$ from which asymptotic unbiasedness follows. Next, we prove part (ii) for which it remains to derive the order of the bias under the additional assumptions. Firstly, from Proposition \autoref{prop:sumCh}, the assumption $\sum_{h \in \znum} h \|P_0(X_h)\|_{\hi,2} < \infty$ implies that $\sum_{h \in \znum} h \snorm{C_h}_{\hi,2} <\infty$. Observe then that we can decompose the bias as 
\begin{align*}
\snorm{\E\hat{\F}_T^{(\lambda)}-\F^{\lambda}}_2& \le \bigsnorm{\underbrace{\frac{1}{2\pi} \sum_{|h| < 1/b_T} (w(b_Th)-1) C_{h}e^{-\im\lambda h}}_{R_{o,\lambda}}}_{2}  +\bigsnorm{\underbrace{\frac{1}{2\pi} \sum_{|h| \ge 1/b_T} C_{h}e^{-\im \lambda h}}_{R_{1,\lambda}}}_2
\\&  + \bigsnorm{\underbrace{\frac{1}{2\pi} \sum_{|h| <1/b_T} w(b_Th)\frac{|h|}{T}C_{h}e^{-\im \lambda h}}_{R_{2,\lambda}}}_2.  
\end{align*}
For the final term, we use that $\sup_x |w(x)| =O(1)$ and that $\sum_{h} |h|\snorm{C_h}<\infty$ in order to obtain
\begin{align*}
\sup_{\lambda}\snorm{R_{2,\lambda}}_2=  \bigsnorm{ \frac{1}{2\pi} \sum_{|h| <1/b_T} |w(b_Th)|\frac{|h|}{T}C_{h}}_2  \le \sup_{x}|w(x)|  \frac{1}{2\pi} \sum_{|h| <1/b_T}\frac{|h|}{T}  \snorm{C_{h}}_2 =O(\frac{1}{T}).
\end{align*}
Moreover, $\sup_{\lambda}\snorm{R_{1,\lambda}}_2 = O(b_T)$.
 Finally, since $w(x)-1  =O(x)$ as $x \to 0$ and $\sum_{h} |h|\snorm{C_h}<\infty$, we find for the first term
\begin{align*}
\sup_{\lambda}\snorm{R_{0,\lambda}}_2=  \bigsnorm{\frac{1}{2\pi} \sum_{|h| < 1/b_T} (w(b_T h)-1) C_{h}e^{-\im \lambda h}}_2   \le \sum_{h} O(b_T h) \snorm{C_h}_2 = O(b_T).
\end{align*}
\end{proof}

\begin{proof}[Proof of \autoref{thm:ANF}]
From \eqref{eq:relFQ}, we have $\hat{\F}^{\lambda} = (2\pi T)^{-1} \hat{\mathcal{Q}^{\lambda}}$ with $\phi_{T,(t-s)}^{\omega}=w(b_T(t-s)) e^{\im \omega (t-s)}$. Note that in this case $\snorm{\Phi_T}^2_{ F}:=\sum_{t=1}^{T}\sum_{s=1}^{T}w^2(b_T(t-s))$ and that $\varrho^2_T =\sum_{s=1}^{T} w^2(b_T s)$. For weight functions satisfying \autoref{as:Weights}, a change of variables and symmetry of the weight function in zero yield 
\begin{align*}
\sum_{t=1}^T \sum_{s=1}^{T} |\phi_{T,(t-s)}|^2
&=\sum_{t=1}^T \sum_{s=1}^{T} \Big| w(b_T(t-s)) e^{-\im \omega (t-s)}\Big|^2 = \sum_{t=1}^T \sum_{s=1}^{T}w^2(b_T(t-s)) 
\sim  \frac{T}{b_T} \int w^2(x) d x =\frac{T}{b_T} \kappa. \tageq \label{eq:Phibig}
\end{align*}
It therefore suffices to verify the conditions of \autoref{thm:funcdist}, which is given here for completeness but follows a standard argument \citep[see e.g.,][]{LiuWu10}. From \autoref{as:Weights}, it is obvious that $\mathrm{(ii)}$ of \autoref{as:phi} holds. Additionally, from \eqref{eq:Phibig}, it is immediate that $\varrho^2_T =\kappa O( b_T^{-1})$ and $\snorm{\Phi_T}^2_{ F} = O(T \varrho^2_T)$ so that $\mathrm{(i)}$ is also satisfied. For $\mathrm{(iii)}$, observe that we can write
\begin{align*}
\sum_{t=1}^{M/b_T} |w(b_T t )-w(b_T (t-1))|^2+ \sum_{t = M/b_T}^{T} |w(b_T t )-w(b_T (t-1))|^2 
\end{align*}
By assumption, the function $w$ is bounded and continuous except on a set of measure zero. Observe that there must exist $\epsilon>0$ such that $|w(b_T t )-w(b_T (t-1))| < \epsilon $ uniformly for $|t| \le M/b_T$, except for a finite number of points, $\varepsilon/b_T$, for some constant $\varepsilon>0$ . Because $w(\cdot)$ is bounded, the first term will thus be of order $o(1/b_T)$. 
For the second term, the length of the interval, $I_{M}$, converges to zero for fixed $b_T, T$ as $M \to \infty$. Since the summand is at most of order $b_T^{-1}$ under the stated conditions, the second term is of order $O(I_M/b_T) =o(\varrho^2_T)$. To verify $\mathrm{(iv)}$, we decompose the term as
\begin{align*} 
\sum_{j=1}^{T-1} \sum_{s =1}^{j-M/b_T} \big(\sum_{t=j+1}^{T}  w(b_T(s-t))w(b_T(j-t))\big)^2 +\sum_{j=1}^{T-1} \sum_{s = 1 \vee j-M/b_T}^{j-1} \big(\sum_{t=j+1}^{T} w(b_T(s-t))w(b_T(j-t) )\big)^2
\end{align*}
For the first term, the Cauchy-Schwarz inequality and the condition $\sup_{0 \le b \le 1} b \sum_{h \ge M/b} w^2(b h) \to 0$ as $M \to \infty$ yield
\begin{align*} 
& \sum_{j=1}^{T-1} \sum_{s =1}^{j-M/b_T} \sum_{t=j+1}^{T}  w^2(b_T(s-t))  \sum_{t=j+1}^{T} w^2(b_T(j-t)) 
 =\sum_{j=1}^{T-1} \sum_{t=j+1}^{T} w^2(b_T(j-t))  \sum_{s =1}^{j-M/b_T}  \sum_{t=M /b_T}^{T}  w^2(b_T(s-t)) 
\\& = O(T b^{-1}_T I_M b^{-1}_T) 
=o(\snorm{\Phi_T}^4_{ F}). 
\end{align*}
 Secondly,
\begin{align*} 
\sum_{j=1}^{T-1} \sum_{s = 1 \vee j-M/b_T}^{j-1} \sum_{t=j+1}^{T}  w^2(b_T(s-t))  \sum_{t=j+1}^{T} w^2(b_T(j-t)) =O(T b_T^{-1} \varrho^4_T) = O(T b_T^{-3}) =o(\snorm{\Phi_T}^4_{ F}) 
\end{align*}
where we used that $1/b_T = o(T)$.
\end{proof}

\begin{proof}[Proof of \autoref{lem:sampmean}]
Note that we can write the spectral density operator as
\begin{align*}
2\pi T \hat{\ddot{\F}}^{(\lambda)} &=\sum_{s,t=1}^{T}\Phi^{\lambda}_{t,s,T}\big( (X_s-\mu +\mu-\hat{\mu}) \otimes (X_t-\mu +\mu-\hat{\mu}) \big) 
\\& =2\pi T \hat{{\F}}^{(\lambda)} +  \sum_{s,t=1}^{T}\Phi^{\lambda}_{t,s,T}\big((\mu-\hat{\mu}) \otimes (X_t-\mu)\big) +  \sum_{s,t=1}^{T}\Phi^{\lambda}_{t,s,T}\big((X_s-\mu) \otimes (\mu-\hat{\mu})\big) \tageq \label{eq:fdot23}\\&  +  \sum_{s,t=1}^{T}\Phi^{\lambda}_{t,s,T}\big(( \mu-\hat{\mu}) \otimes (\mu-\hat{\mu})\big). \tageq \label{eq:fdot4}
\end{align*}
We therefore will show that the last two terms in \eqref{eq:fdot23} and the term in \eqref{eq:fdot4} are of lower order. For the second term of \eqref{eq:fdot23}, a change of variables, the properties of the tensor product and the Cauchy-Schwarz inequality yield
\begin{align*}
&\bignorm{\sum_{|h|< T}\sum_{t =\max(1,1-h)}^{\min(T,T-h)}w(b_T(h))e^{\im (h) \lambda}(\mu-\hat{\mu}) \otimes (X_t-\mu) }_{S_2,2} \\&=\bignorm{\sum_{|h|< T}w(b_T(h))e^{\im (h) \lambda}(\mu-\hat{\mu}) \otimes \sum_{t =\max(1,1-h)}^{\min(T,T-h)}(X_t-\mu) }_{S_2,2} 
\\& \le 
\sum_{|h|< T} w(b_T(h))\Big(\norm{\mu-\hat{\mu}}^2_{\hi,4} \bignorm{\sum_{t =\max(1,1-h)}^{\min(T,T-h)}(X_t-\mu) }^2_{\hi,4}  \Big)^{1/2}
\\& \le 
\sum_{|h|< T} w(b_T(h))\Big(O(T^{-1}) O(T)  \Big)^{1/2} = O(\frac{1}{b_T}),
\end{align*}
where we used \autoref{lem:lineq}(i) in order to obtain $ \norm{\mu-\hat{\mu}}^2_{\hi,4}= \norm{\frac{1}{T} \sum_{t=1}^{T}(X_t - \E X_T)}^2_{\hi,4} \le \frac{1}{T^2} T \boldsymbol{\Delta}_{4,2,0}  =O(\frac{1}{T})$
and to obtain $ \bignorm{\sum_{t=1}^{T-h}(X_t-\mu) }^2_{\hi,4}= O(T)$. The third term in \eqref{eq:fdot23} is similar. For \eqref{eq:fdot4}, Holder's inequality and  \autoref{lem:lineq}(i) yield
\begin{align*}
\bignorm{\sum_{s,t=1}^{T}\Phi^{\lambda}_{t,s,T}\big(( \mu-\hat{\mu}) \otimes (\mu-\hat{\mu})\big)}_{S_2,2}&  \le
\sum_{s,t=1}^{T}\snorm{\Phi^{\lambda}_{t,s,T}}_{\infty}\Big( \E\snorm{( \mu-\hat{\mu}) \otimes (\mu-\hat{\mu})}^2_{2}\big)^{1/2}
\\& \le \sum_{s,t=1}^{T}\snorm{\phi^{\lambda}_{t,s,T}}_{\infty} \norm{\mu-\hat{\mu}}^2_{\hi,4}= O(T b^{-1}_T \frac{1}{T}) = O(\frac{1}{b_T})
 \end{align*}
where we used that $\sum_{s,t=1}^{T}\snorm{\phi^{\lambda}_{t,s,T}}_{\infty} =\sum_{s,t=1}^{T}w(b_T(t-s)) =O(T b^{-1}_T)$. Hence, $\norm{\hat{\ddot{\F}}^{(\lambda)} -\hat{{\F}}^{(\lambda)}}_{S_2,2} =O(T^{-1} b^{-1}_T)$ and the exact argument shows that $\norm{\hat{\ddot{\F}}^{(\lambda)} -\hat{{\F}}^{(\lambda)}}_{S_2,\frac{p}{2}} =O(T^{-1} b^{-1}_T)$ for $p\ge 2$ provided the process is in $\op^{p}_H$. 
\end{proof}

{\begin{proof}[Proof of Proposition \ref{prop:eigcon}]
\blu{Consistency of the empirical eigenvalues follows immediately from \autoref{thm:consF}{(i)} together with the fact that $\sup_{j }|{\beta}_j^{\lambda}- \hat{\beta}_j^{\lambda}| \le  \snorm{\hat{\F}_T^{\lambda} -{\F}^{\lambda}}_\infty \le  \snorm{\hat{\F}_T^{\lambda} -{\F}^{\lambda}}_2$ \citep[see e.g.,][]{GGK90}.
To prove the consistency of the eigenprojectors, denote $\delta_j =\inf_{l\ne k}\big\vert\lamx{\omega}{k}-\lamx{\omega}{l}\big\vert$ and let $\mathrm{1}_{A_{j,T}}$ equals one if the event $A_{j,T}= \{\snorm{\hat{\F}_T^{\lambda} -{\F}^{\lambda}}_2 < \delta_j/4\}$ occurs and zero otherwise. From Proposition 3.1 of \cite{MM03}
\[
\hat{\Pi}^{\lambda}_j-{\Pi}^{\lambda}_j = \mathcal{T}_j(\hat{\F}_T^{\lambda} -{\F}^{\lambda}) + R_{j,T}
\]
where $\mathcal{T}_j$ is a bounded linear operator for each $j$ and where $R_{j,T}$ is a random $S_2(H)$-valued element which satisfies 
$\snorm{R_{j,T}}_2 \mathrm{1}_{A_{j,T}} \le 8 \delta^{-2}_j\snorm{\hat{\F}_T^{\lambda} -{\F}^{\lambda}}^2_2$. The result now follows again from \autoref{thm:consF}{(i)} since $P(\mathrm{1}_{A^\complement_{j,T}}) \to 0$.}
\end{proof}

\section{\blu{Proofs of \autoref{sec:sec5}}}
\begin{lemma}\label{lem:itproj}
Let $X, Y \in \op^{p}_{H}$ and set $Z = \otimes^{k}_{i=1} Z_i$ with $Z_i  \in \op^{p}_{H}$ for $i=1,\ldots, k$ for $k \in \mathbb{N}$. If $Z$ is $\G_j$-measurable, then
\begin{align*}
\E\big[ P_j(X) \otimes Y \otimes Z\big]&= \E \big[P_j(X) \otimes P_j(Y) \otimes Z \big].\tageq \label{eq:itproj}
\end{align*}
\end{lemma}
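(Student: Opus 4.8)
The identity to establish is $\E[P_j(X)\otimes Y\otimes Z]=\E[P_j(X)\otimes P_j(Y)\otimes Z]$ with $P_j=\E[\cdot|\G_j]-\E[\cdot|\G_{j-1}]$. The plan is to subtract the two sides and reduce everything to showing that replacing $Y$ by $P_j(Y)$ costs nothing, i.e.
\[
\E\big[P_j(X)\otimes (Y-P_j(Y))\otimes Z\big]=0 .
\]
Since $Y-P_j(Y)=\big(Y-\E[Y|\G_j]\big)+\E[Y|\G_{j-1}]$, I would split this into two contributions and annihilate each by a single, well-chosen conditioning step. Before that I would record two routine ingredients. First, all the Bochner integrals exist: $P_j(X)\in\op^p_H$ because conditional expectation is a contraction on $\op^p_H$, and the tensor $P_j(X)\otimes Y\otimes Z$ has norm at most $\|P_j(X)\|_H\,\|Y\|_H\prod_{i=1}^k\|Z_i\|_H$, which is integrable by the generalized Holder inequality under the ambient moment assumption (taking $p$ large enough, e.g. $p\ge k+2$). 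Second, I would use repeatedly that conditional expectation commutes with tensoring against a measurable factor: if $A,B$ are $\G$-measurable then $\E[A\otimes W\otimes B\,|\,\G]=A\otimes\E[W|\G]\otimes B$; this follows from the defining property of the conditional Bochner integral together with the boundedness and multilinearity of $\otimes$ (verify on simple tensors, then extend by continuity), and is one of the classical properties that survive in the separable Hilbert setting.

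For the first contribution I would condition on $\G_j$. Since $\G_{j-1}\subseteq\G_j$, both $\E[X|\G_j]$ and $\E[X|\G_{j-1}]$ are $\G_j$-measurable, so $P_j(X)$ is $\G_j$-measurable; and $Z$ is $\G_j$-measurable by hypothesis. Pulling these factors out of $\E[\cdot|\G_j]$ gives
\[
\E\big[P_j(X)\otimes (Y-\E[Y|\G_j])\otimes Z\,\big|\,\G_j\big]
=P_j(X)\otimes\E\big[Y-\E[Y|\G_j]\,\big|\,\G_j\big]\otimes Z=0,
\]
because $\E[\,Y-\E[Y|\G_j]\,|\,\G_j]=\E[Y|\G_j]-\E[Y|\G_j]=0$. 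Taking expectations, this contribution vanishes, so that $\E[P_j(X)\otimes Y\otimes Z]=\E[P_j(X)\otimes\E[Y|\G_j]\otimes Z]$.

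It then remains to treat the second contribution, $\E[P_j(X)\otimes\E[Y|\G_{j-1}]\otimes Z]$, and here I would condition on $\G_{j-1}$. The factor $\E[Y|\G_{j-1}]$ is $\G_{j-1}$-measurable; pulling it and $Z$ out of $\E[\cdot|\G_{j-1}]$ leaves the innermost factor $\E[P_j(X)|\G_{j-1}]=\E[X|\G_{j-1}]-\E[X|\G_{j-1}]=0$, the martingale-difference property, whence
\[
\E\big[P_j(X)\otimes\E[Y|\G_{j-1}]\otimes Z\,\big|\,\G_{j-1}\big]
=\E[Y|\G_{j-1}]\otimes\E\big[P_j(X)\,\big|\,\G_{j-1}\big]\otimes Z=0 .
\]
Combining the two contributions yields $\E[P_j(X)\otimes P_j(Y)\otimes Z]=\E[P_j(X)\otimes\E[Y|\G_j]\otimes Z]=\E[P_j(X)\otimes Y\otimes Z]$, as claimed.

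The main obstacle is exactly this last step: its validity hinges on being able to pull $Z$ out of the conditional expectation given $\G_{j-1}$, so that the martingale-difference property can act on $P_j(X)$ in isolation. This needs that $Z$ carry no information about the innovation entering at level $j$, i.e. that the factors collected into $Z$ be measurable with respect to $\G_{j-1}$; if $Z$ genuinely depended on that innovation, the joint conditional expectation $\E[P_j(X)\otimes Z|\G_{j-1}]$ would not factor and need not vanish. I would therefore check that in every invocation of the lemma the $Z_i$ are indexed strictly below the projection level $j$, so that the required measurability genuinely holds; the first contribution, by contrast, is automatic and uses only $\G_j$-measurability.
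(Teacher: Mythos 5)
Your argument is essentially the paper's own proof: the paper likewise expands $P_j(Y)=\E[Y|\G_j]-\E[Y|\G_{j-1}]$, identifies $\E[P_j(X)\otimes\E[Y|\G_j]\otimes Z]$ with $\E[P_j(X)\otimes Y\otimes Z]$ by conditioning on $\G_j$ and pulling out the $\G_j$-measurable factors $P_j(X)$ and $Z$, and annihilates the remaining term by conditioning on $\G_{j-1}$ and invoking the martingale-difference property $\E[P_j(X)|\G_{j-1}]=0$. Your two ``contributions'' are exactly these two terms, and your integrability remarks (contraction of the conditional expectation, generalized H\"older, conditional expectation commuting with tensoring against measurable factors) are the routine ingredients the paper leaves implicit.

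The caveat you raise at the end deserves emphasis, because it is not hypothetical: it is precisely where the paper's own proof is loose. In its displayed chain the paper passes from $\E\big[\E[P_j(X)\otimes\E[Y|\G_{j-1}]\otimes Z\,|\,\G_{j-1}]\big]$ to $\E\big[\E[P_j(X)|\G_{j-1}]\otimes\E[Y|\G_{j-1}]\otimes Z\big]$, i.e.\ it pulls $Z$ out of the conditional expectation given $\G_{j-1}$, even though the stated hypothesis only makes $Z$ $\G_j$-measurable. As you suspect, with only $\G_j$-measurability the conclusion can genuinely fail: take $H=\rnum$, $Y\equiv c\neq 0$ deterministic (so $P_j(Y)=0$), and $X=Z=\epsilon_j$ an innovation entering at level $j$; then $\E[P_j(X)\otimes Y\otimes Z]=c\,\E\epsilon_j^2\neq 0$ while $\E[P_j(X)\otimes P_j(Y)\otimes Z]=0$. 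So the hypothesis of the lemma should read that $Z$ is $\G_{j-1}$-measurable, and your closing instruction --- verify in each invocation that the $Z_i$ sit at levels strictly below the projection level $j$ --- is exactly the check needed when the lemma is applied iteratively in the cumulant-summability proof. One cosmetic point: in your second display you permute the slots ($\E[Y|\G_{j-1}]$ written first); since $\otimes$ is not commutative you should keep the original slot order, though this is harmless here because conditioning factors slot-by-slot and the $P_j(X)$-slot vanishes wherever it sits.
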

\begin{proof}
By linearity, $\G_j$-measurability of $P_j(X)$ and $Z$, respectively and the tower property
\begin{align*}
\E \big[P_j(X) \otimes P_j(Y) \otimes Z \big] 
 & = \E[\E[ P_j(X)  \otimes Y \otimes Z| \G_j]] - \E [ \E[P_j(X) \otimes \E[Y|\G_{j-1}] \otimes Z |\G_{j-1}]]
 \\&=   \E[ P_j(X) \otimes Y \otimes Z]  - \E[\E[P_j(X)| \G_{j-1}]\otimes  \E[Y|\G_{j-1}] \otimes Z ]. 
\end{align*}
The result follows then from noting that $P_j(\cdot)$ is a martingale difference sequence with respect to $\G_j$.
\end{proof}
\begin{proof}[Proof of \autoref{thm:cumrel4}]
It suffices to consider $t_{k-1} \ge t_{k-2} \ge \ldots t_1 \ge 0$ since
\[
\sum_{t_1,\ldots, t_{k-1} \in \znum}\snorm{\,\mathrm{cum}(X_{t_1},\ldots, X_{t_{k-1}},X_0)}_{2} = k! \sum_{t_{k-1} \ge t_{k-2} \ge \ldots t_1 \ge 0}\snorm{\,\mathrm{cum}(X_{t_1},\ldots, X_{t_{k-1}},X_0)}_{2}.
\]
We use \eqref{eq:cumtens} and 
first consider the set $|v_1|=k$. We may write
\begin{align*}
\E \big[ X_{t_1} \otimes \cdots \otimes X_{t_{k-1}}\otimes X_0] 
&=  \sum_{j_0 = -\infty}^{0} \sum_{j_1=-\infty}^{t_1}\cdots \sum_{j_{k-1}=-\infty}^{t_{k-1}}\E \big[ P_{j_1}(X_{t_1}) \otimes \cdots \otimes P_{j_{k-2}}(X_{t_{k-1}}) \otimes  P_{j_0}(X_0)\big], \tageq \label{eq:cum1}
\end{align*}
where the equality holds in Hilbert-Schmidt norm. Observe that 
the term will be zero unless the two largest elements of the set $J:=\{j_0,\ldots, j_{k-1}\}$ are equal; otherwise we can apply to the tower property and condition on a filtration for which all but one element are measurable. 
Without loss of generality, we consider the case $j_{k-2}= j_{k-3} \ge \ldots, j_1 \ge j_0$. In this case, \eqref{eq:cum1}  becomes
\begin{align*}
&  \sum_{j_0 = -\infty}^{0} \sum_{j_1=-\infty}^{t_1}\cdots \sum_{j_{k-3}=-\infty}^{t_{k-3}} \sum_{j_{k-2}=-\infty}^{t_{k-2}}\E \big[ P_{j_1}(X_{t_1}) \otimes \cdots \otimes P_{j_{k-2}}(X_{t_{k-2}})  \otimes P_{j_{k-2}}(X_{t_{k-1}}) \otimes  P_{j_0}(X_0)\big].
\end{align*}
Appplying \autoref{lem:itproj} iteratively, we can write,
\begin{align*}
& \snorm{\,\E \big[ P_{j_1}(X_{t_1}) \otimes \cdots \otimes P_{j_{k-2}}(X_{t_{k-2}})  \otimes P_{j_{k-2}}(X_{t_{k-1}}) \otimes  P_{j_0}(X_0)\big] }_2
\\& =\snorm{\, \E \big[  P_{j_0}\Bigg(P_{j_1}(X_{t_1}) \otimes P_{j_1}\Big(P_{j_{2}}(X_{t_{2}})\otimes P_{j_2}\big(  \cdots \otimes P_{j_{k-3}}\big(P_{j_{k-2}}(X_{t_{k-2}})  \otimes P_{j_{k-2}}(X_{t_{k-1}})\big) \big) \Big)  \Bigg)\otimes  P_{j_0}(X_0)\big]}_2. 
\end{align*}
To ease the exposition, we focus on $k=4$. Denote $P_{j,\{l\}}(\cdot) = \E[\cdot|\G_{j,\{l\}}]- \E[\cdot|\G_{j-1,\{l\}}]$. By Jensen's inequality, the last term is then bounded by
\begin{align*}
 \le \bignorm{ P_{j_0}\Big(P_{j_1}(X_{t_1}) \otimes P_{j_1}\big(P_{j_2}(X_{t_2}) \otimes P_{j_{2}}(X_{t_{3}}) \big)  \Big)}_{\hi^{\otimes 3},\frac{4}{3}}\|P_{j_0}(X_0)\|_{\hi,4} = A \cdot \|P_{j_0}(X_0)\|_{\hi,4}, 
\end{align*}
where we used the notation  $\|\cdot\|_{ \hi^{\otimes n},\ell} = (\E\|\cdot\|^\ell_{\otimes^n H})^{1/\ell}, \ell \ge 0$. By Jensen's inequality, $A$ is bounded by
\begin{align*}
& \le \bignorm{ P_{j_1}(X_{t_1}) \otimes P_{j_1}\big(P_{j_2}(X_{t_2}) \otimes P_{j_{2}}(X_{t_{3}}) \big) -P_{j_1,\{j_0\}}(X_{t_1}) \otimes P_{j_1,\{j_0\}}\big(P_{j_2}(X_{t_2}) \otimes P_{j_{2}}(X_{t_{3}}) \big)}_{\otimes^{3} \hi,\frac{4}{3}}
\\& 
\le \bignorm{  P_{j_1}(X_{t_1})- P_{j_1,\{j_0\}}(X_{t_1})}_{\hi,4} \bignorm{P_{j_1}\big(P_{j_2}(X_{t_2}) \otimes P_{j_{2}}(X_{t_{3}}) \big) }_{S_2,2}
\\& + \bignorm{P_{j_1,\{j_0\}}(X_{t_1})}_{\hi,4} \bignorm{P_{j_1}\big(P_{j_2}(X_{t_2}) \otimes P_{j_{2}}(X_{t_{3}}) \big)- P_{j_1,\{j_0\}}\big(P_{j_2}(X_{t_2}) \otimes P_{j_{2}}(X_{t_{3}}) \big) }_{S_2,2}
\\& 
\le   \bignorm{P_{j_1,\{j_0\}}(X_{t_1})}_{\hi,4} \cdot J + \bignorm{  P_{j_1}(X_{t_1})- P_{j_1,\{j_0\}}(X_{t_1})}_{\hi,4}\Big\{ \bignorm{\big(P_{j_2}(X_{t_2})- P_{j_2,\{j_1\}}(X_{t_2}) \big)}_{\hi,4} \bignorm{P_{j_{2}}(X_{t_{3}})}_{\hi,4}
\\& \phantom{\bignorm{P_{j_1,\{j_0\}}(X_{t_1})}_{H,k} \cdot J + \bignorm{  P_{j_1}(X_{t_1})- P_{j_1,\{j_0\}}(X_{t_1})}_{\hi,4}\Big\{} + \bignorm{P_{j_2,\{j_1\}}(X_{t_2})}_{\hi,4}  \bignorm{P_{j_{2}}(X_{t_{3}}) - P_{j_{2},\{j_1\}}(X_{t_{3}})}_{\hi,4}\Big\},
 \end{align*}
 where $J$ is given by
\begin{align*}
\bignorm{ P_{j_2}(X_{t_2}) \otimes P_{j_{2}}(X_{t_{3}})-P_{j_2,\{j_1\}}(X_{t_2}) \otimes P_{j_{2}\{j_1\}}(X_{t_{3}})  -P_{j_2,\{j_0\}}(X_{t_2}) \otimes P_{j_2,\{j_0\}}(X_{t_{3}}) \big)+P_{j_2,\{j_0,j_1\}}(X_{t_2}) \otimes P_{j_2,\{j_0,j_1\}}(X_{t_{3}}) \big) }_{S_2,2}.
\end{align*}
Using that 
\begin{align*}x_1 y_1-x_2y_2-x_3 y_3+x_4 y_4 &=(x_1-x_2- x_3+x_4)y_1 + x_4(y_4-y_3-y_2+y_1) 
\\&+ (x_4-x_2)(y_2-y_1)+ (x_3-x_4)(y_1-y_2),
\end{align*}
we can write $J = J_1 +J_2 +J_3 +J_4$ with
\begin{align*}
J_1 & \le \bignorm{P_{j_2}(X_{t_2}) -P_{j_2,\{j_1\}}(X_{t_2})-P_{j_2,\{j_0\}}(X_{t_2}) +P_{j_2,\{j_0,j_1\}}(X_{t_2})}_{\hi,4} \bignorm{P_{j_{2}}(X_{t_{3}})}_{\hi,4}
\\J_2 & \le \bignorm{P_{j_2}(X_{t_3}) -P_{j_2,\{j_1\}}(X_{t_3})-P_{j_2,\{j_0\}}(X_{t_3}) +P_{j_2,\{j_0,j_1\}}(X_{t_3})}_{\hi,4} \bignorm{P_{j_2,\{j_0,j_1\}}(X_{t_{2}})}_{\hi,4}
\\J_3 & \le\bignorm{P_{j_2,\{j_1\}}(X_{t_2})-P_{j_2,\{j_0,j_1\}}(X_{t_2})}_{\hi,4}\bignorm{P_{j_{2}}(X_{t_{3}})- P_{j_{2},\{j_1\}}(X_{t_{3}}) }_{\hi,4}
\\J_4 & \le\bignorm{P_{j_2,\{j_0\}}(X_{t_2}) -P_{j_2,\{j_0,j_1\}}(X_{t_2})}_{\hi,4}\bignorm{P_{j_{2}}(X_{t_{3}})-P_{j_{2},\{j_1\}}(X_{t_{3}}) }_{\hi,4}.
\end{align*}
By the contraction property of the conditional expectation
\begin{align*}
\bignorm{P_{j_k}(X_t)+\sum_{i=1}^{k-1}(-1)^{i} \sum_{1\le l_1 < \ldots <l_{i} \le{k-1}} P_{j_k,\{j_{l_1},\ldots,j_{l_i}\}}(X_t)}_{\hi,p}  \le \bignorm{X_t+\sum_{i=1}^{k}(-1)^{i} \sum_{1\le l_1 < \ldots <l_{i}\le k} \E[X_t |\G_{\{j_{l_1},\ldots,j_{l_i}\}}]}_{\hi,p}, \tageq \label{eq:tightbound}
\end{align*}
and therefore
\begin{align*}
& \bigsnorm{ \, \E \Big[ P_{j_1}(X_{t_1}) \otimes P_{j_2}(X_{t_2}) \otimes P_{j_{2}}(X_{t_{3}})  \otimes  P_{j_0}(X_0)\Big] }_{S_2(S_2(H))}
\\&\le \nu_{\hi,4}(-j_0)  \nu_{\hi,4}(t_1-j_1) \Big\{\nu_{\hi,4}(t_2-j_2,t_2-j_1,t_2-j_0) \nu_{\hi,4}(t_3-j_2) +  \nu_{\hi,4}(t_3-j_2,t_3-j_1,t_3-j_0) \nu_{\hi,4}(t_2-j_2) 
\\&\phantom{\nu_{\hi,4}(-j_0)  \nu_{\hi,4}(t_1-j_1) \cdot}+  \nu_{\hi,4}(t_2-j_0,t_2-j_2)  \nu_{\hi,4}(t_3-j_1,t_3-j_2)  +  \nu_{\hi,4}(t_2-j_1,t_2-j_2)  \nu_{\hi,4}(t_3-j_1,t_3-j_2)  \Big\}
\\&+ \nu_{\hi,4}(-j_0) \nu_{\hi,4}(t_1-j_0,t_1-j_1)\Big\{\nu_{\hi,4}(t_2-j_1,t_2-j_2) \nu_{\hi,4}(t_3-j_2) + \nu_{\hi,4}(t_2-j_2)  \nu_{\hi,4}(t_3-j_1,t_3-j_2)\Big\}.
\end{align*}
Denote  $\boldsymbol{\Delta}_{\hi,4}(k):=\sum_{j_0,\ldots, j_{k-1} =0}^{\infty} \nu_{\hi,4}(j_0,\ldots, j_{k-1})$. Tedious calculations then show that, using \eqref{eq:sufcon}, we obtain
\begin{align*}
& \sum_{t_3 \ge t_2 \ge t_1 \ge 0}\sum_{j_0 = -\infty}^{0} \sum_{j_1=-\infty}^{t_1}\sum_{j_2=-\infty}^{t_2} \sum_{j_{2}=-\infty}^{t_{2}}\bigsnorm{\E \Big[ P_{j_1}(X_{t_1}) \otimes P_{j_2}(X_{t_2}) \otimes P_{j_{2}}(X_{t_{3}})  \otimes  P_{j_0}(X_0)\Big] }_{2}
\\& \le \big(\boldsymbol{\Delta}_{\hi,4}\big)^2 \Big\{2\boldsymbol{\Delta}_{\hi,4}(3) \boldsymbol{\Delta}_{\hi,4} + 2\big(\boldsymbol{\Delta}_{\hi,4}(2)\big)^2\Big\} + \boldsymbol{\Delta}_{\hi,4}(2)  \boldsymbol{\Delta}_{\hi,4} \Big\{2\boldsymbol{\Delta}_{\hi,4}(2) \boldsymbol{\Delta}_{\hi,4}\Big\} < \infty.
\end{align*}
For the second order terms, consider for example the term
\[
\sum_{t_3 \ge t_2 \ge t_1 \ge 0} \bigsnorm{\Pi_{1324}\Big(\E \big[ X_{t_1} \otimes X_{t_3}]  \otimes \E[X_{t_2}\otimes X_0]\Big)}_{2}.\]
By orthogonality of the projections and a similar derivation as above yields that this is bounded by
\begin{align*}
&\le \sum_{t_3 \ge t_2 \ge t_1 \ge 0} \sum_{j_0 = -\infty}^{0} \sum_{j_1=-\infty}^{t_1} \bigsnorm{\Pi_{1324}\Big(\E\big[ P_{j_{0}}(X_{t_{2}}) \otimes  P_{j_0}(X_0)\big] \otimes\E \big[ P_{j_1}(X_{t_1}) \otimes  P_{j_{1}}(X_{t_{3}}) \big]\Big)}_{2}
\\&  \le \sum_{t_3 \ge t_2 \ge t_1 \ge 0} \sum_{j_0 = 0}^{\infty} \sum_{j_1=-t_1}^{\infty} \nu_{\hi,2}(t_2+j_0)\nu_{\hi,2}(j_0)\nu_{\hi,2}(t_1+j_1)\nu_{\hi,2}(t_3+j_1)
\\& \le \sum_{t_3 \ge t_2 \ge t_1 \ge 0} \sum_{j_0 = 0}^{\infty} \sum_{j_1=-t_1}^{\infty} \nu_{\hi,2}(t_2+j_0,t_2+j_1)\nu_{\hi,2}(j_0)\nu_{\hi,2}(t_1+j_1)\nu_{\hi,2}(t_3+j_1)
\\&
\le \boldsymbol{\Delta}^3_{\hi,2} \boldsymbol{\Delta}_{\hi,2}(2) < \infty.
\end{align*}
The other terms are similar and the proof is therefore omitted.
\end{proof}
\begin{proof}[Proof of Proposition \ref{prop:sufcon}]
Observe that for any $k \in \{1,\ldots,n\}$, Minkowski's inequality implies
\begin{align*}
\nu_{X,\hi,p}(j_{1},\ldots,j_{n}): &= \bignorm{X_0+\sum_{i=1}^{n}(-1)^{i} \sum_{1\le l_1 < \ldots <l_{i}\le n} \E[X_0 |\G_{0,\{-j_{l_1},\ldots,-j_{l_i}\}}]}_{\hi,p} 
\\ & \le \bignorm{X_0+\sum_{i=1}^{n-1}(-1)^{i} \sum_{\substack{l_1 < \ldots <l_{i}
\in \{1,\ldots,n\}\setminus \{k\}}} \E[X_0 |\G_{0,\{-j_{l_1},\ldots,-j_{l_i}\}}]}_{\hi,p}  \\&+ \bignorm{\E[X_0|\G_{0,\{-j_{k}\}}]+\sum_{i=1}^{n-1}(-1)^{i} \sum_{\substack{l_1 < \ldots <l_{i}
\in \{1,\ldots,n\}\setminus \{k\}}} \E[X_0 |\G_{0,\{-j_{k}\},\{-j_{l_1},\ldots,-j_{l_i}\}}]}_{\hi,p},
\end{align*}
since the second term is equal to the first, it then easily follows that 
\[\nu_{X,\hi,p}(j_1,\ldots,j_{n}) \le 2 \min_{1\le l_1< \dots <l_{n-1}\le n} \nu_{X,\hi,p}({j_{l_{1}},\ldots, j_{l_{n-1}}}). \tageq \label{eq:minbound}\] 
Applying \eqref{eq:minbound} consecutively gives the result.
\end{proof}

\end{document}